\documentclass[11pt]{amsart}

\usepackage{latexsym}
\usepackage{amssymb}
\usepackage{amsmath}
\usepackage{xcolor}
\usepackage{tikz-cd}

\usepackage[italian, english]{babel}
\usepackage{url}

\newtheorem{theorem}{Theorem}[section]

\newtheorem{proposition}[theorem]{Proposition}
\newtheorem{corollary}[theorem]{Corollary}
\newtheorem{question}[theorem]{Question}

\theoremstyle{definition}
\newtheorem{definition}[theorem]{Definition}

\theoremstyle{remark}
\newtheorem{remark}[theorem]{Remark}

\theoremstyle{example}
\newtheorem{example}[theorem]{Example}

\def\N{\mathbb{N}}
\def\Z{\mathbb{Z}}
\def\R{\mathbb{R}}
\def\XX{\mathbb{X}}
\def\YY{\mathbb{Y}}

\def\XX{\mathbf{X}}
\def\YY{\mathbf{Y}}

\def\s{\mathfrak{s}}

\def\i{\mathfrak{i}}

\def\dd{\mathfrak{d}}

\def\hN{{{}^*\N}}

\def\hR{{{}^*\R}}
\def\hA{{{}^*\!A}}
\def\hB{{{}^*\!B}}

\def\hX{{{}^*\!X}}

\def\hY{{{}^*Y}}

\def\A{\mathcal{A}}

\def\G{\mathcal{G}}

\def\M{\mathcal{M}}
\def\PP{\mathcal{P}}

\def\U{\mathcal{U}}

\def\UU{\mathfrak{U}}

\def\VV{\mathbb{V}}

\def\ueq{{\,{\sim}_{{}_{\!\!\!\!\! u}}\;}}

\def\res{\!\upharpoonright\!}

\makeatletter
\@namedef{subjclassname@2020}{%
  \textup{2020} Mathematics Subject Classification}
\makeatother

\begin{document}

\title[Foundations of iterated star maps]
{Foundations of iterated star maps
\\
and their use in combinatorics}

\author{Mauro Di Nasso}

\author{Renling Jin}

\address{Dipartimento di Matematica\\
Universit\`a di Pisa, Italy}

\email{mauro.di.nasso@unipi.it}

\address{Department of Mathematics\\
College of Charleston, U.S.A.}

\email{jinr@charleston.edu}

\subjclass[2020]{Primary 03H05; Secondary 05A17, 05D10.}

\keywords{Ramsey theory, combinatorial number theory, iterated star maps,
nonstandard universe.}

\begin{abstract}
We develop a framework for nonstandard analysis
that gives foundations to the interplay between external and internal
iterations of the star map, and we present a few examples to show the strength
and flexibility of such a nonstandard technique
for applications in combinatorial number theory.
\end{abstract}

\maketitle

\section*{Introduction}

In the past decade or so, iterated star maps in nonstandard analysis
have been successfully used to prove new results
in Ramsey theory and combinatorics of numbers (see, \emph{e.g.}, \cite{dn1,lu,dl,blm}).
To the authors' knowledge, a first use of double nonstandard extensions
in applications goes back to the 80s years of the last century
with the work by V.A. Molchanov \cite{mo} in topology
(see also \cite{gy} for a related research).
However, that approach did not find further developments,
and no more use of similar ideas appears to have been made
until 2010, when the first named author started considering
iterated star maps as a convenient tool
to formalize algebra of ultrafilters in applications to combinatorics (see \cite{dn-ln,dn1,dn2}).
The foundations of such iterated star maps was then studied
by L. Luperi Baglini in his Ph.D. thesis \cite{luPhD},
and applications in Ramsey Theory and combinatorics
initiated with the works \cite{dn1} and \cite{lu}.
Recently, iterated bounded elementary embeddings were used
by the second named author in \cite{jin} to simplify the combinatorial arguments
used in the original proof of Szemer\'{e}di's Theorem.
The new idea there is to utilize various versions of iterated
star maps from a nonstandard universe
to another nonstandard universe, based on whether each involved step
of the iteration is done ``internally" or ``externally".

\smallskip
The construction of iterated ultraproducts was first introduced by
H. Gaifman \cite{ga} in the 60s years of the last century,
and is now a relevant topic both in set theory, starting from the
work by K. Kunen \cite{ku} (see \emph{e.g.} \cite{ms}), and in model theory
(see \emph{e.g.} \cite[\S 6.5]{ck}). Also in the context of nonstandard analysis the idea
of iterating nonstandard embeddings, also in their ``internal" version,
has been already considered (see the foundational
work by K. Hrb\'{a}\v{c}ek in the
general context of nonstandard set theory, \emph{e.g.} \cite{hr}).
However, the above studies do not seem to consider, to the knowledge of these authors,
compositions in which ``internal'' and ``external'' star maps are alternated.
Such compositions are possible in a natural way when
the considered nonstandard embedding $*:\VV\to\VV$ is an embedding
from a universe into itself.

\smallskip
The first explicit construction of a star map $*:\VV(\XX)\to\VV(\XX)$ from a superstructure
over a set of atoms $\XX$ into itself was presented by V. Benci in \cite{be}.
In the general setting of set theory, the possibility of taking a bounded elementary extension
of the whole universe into itself as a foundation for nonstandard analysis,
was first considered by D. Ballard and K. Hrb\'{a}\v{c}ek in \cite{bh}.\footnote
{~Such bounded elementary embeddings can be constructed in
a set theory where one drops the axiom of foundation and replace it
with a suitable axiom that guarantees the existence of a Mostowski's transitive
collapse for every extensional structures.}
That idea was then adopted and studied by the first named author, and
an axiomatic nonstandard set theory that incorporates a star map
defined on the entire well-founded universe was introduced in \cite{dn00,dn0}.

\smallskip
Our goal here is to systematically develop a framework for nonstandard analysis
where alternate iterations of the star map with ``internal" versions of it are possible,
and to demonstrate its strength and flexibility in applications.
To this end, we will present new nonstandard proofs of some theorems
in arithmetic Ramsey theory where this technique is used.

\smallskip
The paper is organized in the following sections. In \S\ref{sec-basics},
the foundations for the use of iterated star maps in nonstandard analysis are recalled,
within the most commonly used framework of superstructures.
In \S\ref{sec-bistarmap}, the \emph{internal star map} $\circledast$ is introduced, and some basic properties
of two-step iteration of the star maps are proven.
In \S\ref{sec-internalstarmaps} the $n$-th internal star maps
are introduced, which correspond to the internal
version of iterated star maps, and the property
of \emph{transfer} is proved for them.
The possible compositions of iterated star maps and
$n$-th internal star maps are then studied,
and a representation theorem is proved.
\S \ref{sec-bee} is focused on the possible bounded elementary embedding
between internal universes of level $n$, and their properties.
In the next \S\ref{sec-combinatorics}, a few basic results
that connect iterated star maps with Ramsey properties are presented.
Finally, in \S\ref{sec-applications}, three results in
combinatorics of numbers, namely the classic Ramsey's Theorem,
the multidimensional van der Waerden's Theorem,
and the partition regularity of exponential triples, are proved,
so to illustrate the nonstandard technique presented here.
In the closing \S\ref{question}, some questions are asked.

\medskip
\section{Models of nonstandard analysis}\label{sec-basics}

We will follow the most used approach to nonstandard analysis
based on superstructures. Let us recall here the basic notions.\footnote
{~Full details about the
superstructure approach to nonstandard analysis are found in \cite[\S 4.4]{ck};
see also \cite[Ch.15]{keisler}.}

\begin{definition}
The \emph{superstructure} over a set of atoms (or urelements) $\XX$
is the union $\VV(\XX)=\bigcup_{n=0}^\infty V_n(\XX)$ where
$V_0(\XX)=\XX$, and inductively $V_{n+1}(\XX)=V_n(\XX)\cup\mathcal{P}(V_n(\XX))$.

It is assumed that $\XX\supseteq\R$, i.e., $\XX$ contains the set of real numbers.
\end{definition}

We remark that any superstructure contains all mathematical objects of interest
and is closed under all fundamental set theoretic operations.
So, it is safe to say that a superstructure is a
``mathematical universe", in the sense that virtually all mathematical constructions
that one performs in the practice, with the exception of some general constructions
in set theory and topology, are carried within it.

It is easily seen that if $y\in x\in V_n(\XX)$ then $y\in V_{n-1}(\XX)$,
and therefore the superstructure $\VV(\XX)$ is transitive.\footnote
{~Recall that a set $T$ is \emph{transitive} if
elements of elements of $T$ are elements of $T$, \emph{i.e.}
$a\in b\in T\Rightarrow a\in T$.}

\begin{definition}
A (superstructure) \emph{model of nonstandard analysis}
(or a \emph{nonstandard model}) is a triple
$\langle \VV(\XX),\VV(\YY), *\rangle$ where:
\begin{itemize}
\item
The \emph{star-map} $*:\VV(\XX)\to\VV(\YY)$ (or \emph{nonstandard embedding})
is a map whose domain is a universe $\VV(\XX)$, named \emph{standard universe},
and that takes values in a universe $\VV(\YY)$, named \emph{nonstandard universe};
\item
${}^*\XX=\YY$;
\item
The \emph{transfer principle} holds:
For every bounded quantifier formula $\varphi(x_1,\ldots,x_n)$
and for all elements $A_1,\ldots,A_k\in\VV(\XX)$:
$$\varphi(A_1,\ldots,A_n)\Longleftrightarrow\varphi(\hA_1,\ldots,\hA_n).$$
\end{itemize}

Besides, it is assumed that ${}^*r=r$ for all $r\in\R$, and
that the non-triviality condition $\hN\ne\N$ holds.

If $A\in\VV(\XX)$, its image $\hA$ under the star-map
is called the \emph{nonstandard extension} (or the \emph{star extension}, or
the \emph{hyper-extension}) of $A$.\footnote
{~To simplify the notation, it is customary to write $\hA$ instead of $*(A)$.}
\end{definition}

Recall that a \emph{bounded quantifier formula}
is a first-order formula in the language of set theory
where all quantifiers appear in the bounded forms $\forall x\in y$ or $\exists x\in y$.
In other words, the \emph{transfer principle} states that
the star map $*$ is a \emph{bounded elementary embedding}.


As a consequence of the \emph{transfer principle}, the nonstandard extension $\hA\in\VV(\YY)$
of a mathematical object $A\in\VV(\XX)$ is a sort of ``weakly isomorphic" copy of $A$,
in that $\hA$ and $A$ satisfy the same ``elementary properties",
as formalized by bounded quantifier formulas.
For instance, the nonstandard extension $\hR\supsetneq\R$ of the real numbers
will be an ordered field that properly extends the real line, and hence it will
contain infinitesimal and infinite numbers.


\begin{definition}
An element $x\in\VV(\YY)$ is \emph{internal} if it belongs to some
hyper-extension $x\in\hA$. The \emph{internal universe}
${}^*\VV(\XX)\subseteq\VV(\YY)$ is the class of internal elements:
$${}^*\VV(\XX):=\bigcup_{X\in\VV(\XX)}{}^*\!X$$
Elements of the universe that are not internal
are called \emph{external}.
\end{definition}

The first basic examples of external objects are the sets $\N$ and $\R$
of natural numbers and real numbers. It is worth stressing the fact
that in the practice of nonstandard analysis, external objects are
of central importance. Probably the most relevant example is
given by the \emph{Loeb measure} construction (see \emph{e.g} \cite{cu}).

It is easily seen that the internal universe $^*\VV(\XX)$
is the \emph{transitive closure} of $\text{range}(*)=\{{}^*\!X\mid X\in\VV(\XX)\}$.\footnote
{~Recall that the transitive closure of a set $A$ is the smallest transitive set $T\supseteq A$.
Such a set can be defined as $T=\bigcup_{n\ge 0}A_n$ where $A_0=A$ and,
inductively, $A_{n+1}=\bigcup_{a\in A_n}a$.}

Sometimes, elements of $\text{range}(*)$
are named \emph{internal-standard}, because they are
images under the map $*$ of standard elements, and they are internal.

By \emph{transfer}, it is shown that a subset $B\subseteq\hX$
of an hyper-extension is internal if and only if
$B\in {}^*\PP(X)$ belongs to the hyper-extension of the corresponding powerset.

\smallskip
Useful additional properties that a model of nonstandard analysis may have are
the following, where $\kappa$ is a fixed infinite cardinal.
\begin{itemize}
\item
\emph{$\kappa$-enlargement property}.
Let $\G$ be a ``bounded" family of standard sets, \emph{i.e.}
$\G\subseteq A$ for some $A\in\VV(\XX)$.
If $|\G|<\kappa$ and
$\G$ satisfies the \emph{finite intersection property}, then $\bigcap_{G\in\G}{}^*G\ne\emptyset$.
\item
\emph{$\kappa$-saturation property}.
Let $\G$ be a ``bounded" family of internal sets, \emph{i.e.}
$\G\subseteq{}^*\!A\cap{}^*\VV(\XX)$ for some $A$.
If $|\G|<\kappa$ and
$\G$ satisfies the \emph{finite intersection property}, then $\bigcap_{B\in\G}B\ne\emptyset$.
\end{itemize}

If $\G\subseteq A$ is a family of standard sets with the finite intersection property,
then also the family of internal sets
$*[\G]=\{\hB\mid B\in\G\}$ has the finite intersection property, and so it is clear
that $\kappa$-saturation implies $\kappa$-enlargement
(but the converse implication does not hold).


\subsection{Star maps from a universe into itself}

\

\smallskip
\noindent
As first observed in \cite{be}, by choosing the set of atoms $\XX$ with an appropriate cardinality,
one can define a star map $*:\VV(\XX)\to\VV(\YY)$ where $\YY=\XX$, and hence
where the standard and the nonstandard universe coincide.
For completeness, let us review here the construction (see also \cite[A.1.4]{dgl}).

\begin{theorem}\label{ultrapowermodel}
Let $\XX$ be a set of atoms where $\R\subset\XX$, and assume
that $|\XX|=|\XX\setminus\R|=2^\kappa$ for some infinite cardinal $\kappa$.
Then there exists a model of nonstandard analysis
$\langle\VV(\XX),\VV(\XX),*\rangle$ where the standard and nonstandard universe coincide,
and where the $\kappa^+$-saturation property holds.
\end{theorem}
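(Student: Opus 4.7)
The plan is to realize the model as a Mostowski collapse of a suitably saturated bounded ultrapower of $\VV(\XX)$, and then exploit the cardinality hypothesis on $\XX$ to rename atoms so that the nonstandard universe coincides literally with $\VV(\XX)$.

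First I would fix an index set $I$ of cardinality $\kappa$ and choose, by Kunen's classical construction, a countably incomplete, $\kappa^+$-good ultrafilter $\U$ on $I$. I would then form the bounded ultrapower $\widetilde{\VV}=\prod_I\VV(\XX)/\U$, retaining only those $\U$-equivalence classes $[f]_\U$ whose representatives take values, $\U$-almost everywhere, inside some fixed level $V_n(\XX)$. \L o\'s's theorem for bounded formulas ensures that the diagonal map $d\colon\VV(\XX)\to\widetilde{\VV}$, $A\mapsto[i\mapsto A]_\U$, is a bounded elementary embedding. Countable incompleteness of $\U$ supplies non-triviality (the ultrapower image of $\N$ strictly extends the pointwise image), while $\kappa^+$-goodness yields $\kappa^+$-saturation of $\widetilde{\VV}$ by Keisler's standard theorem.

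Next, I would Mostowski-collapse $\widetilde{\VV}$. Its membership relation is extensional and well-founded above the atoms (classes $[f]_\U$ with $f(i)\in\XX$ almost everywhere), so the collapse produces a genuine superstructure $\VV(\YY)$ over some atom set $\YY$, with each constant sequence $[\langle r\rangle]_\U$ for $r\in\R$ collapsed to $r$ itself; this gives $\R\subseteq\YY$ and $*r=r$. A routine count yields $|\YY\setminus\R|\leq|\XX|^{|I|}=(2^\kappa)^\kappa=2^\kappa$ from above and $|\YY\setminus\R|\geq|\XX\setminus\R|=2^\kappa$ from below via the constant-sequence embedding, so $|\YY\setminus\R|=|\XX\setminus\R|$. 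Any bijection $\theta\colon\YY\to\XX$ fixing $\R$ pointwise now lifts canonically, by recursion on superstructure levels, to an isomorphism $\widehat{\theta}\colon\VV(\YY)\to\VV(\XX)$; setting $*:=\widehat{\theta}\circ d$ produces the desired self-map $\VV(\XX)\to\VV(\XX)$, and transfer, the normalization $*r=r$, the identity $*\XX=\XX$, and $\kappa^+$-saturation all descend through $\widehat{\theta}$.

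The main obstacle is the cardinality balancing in the collapse: the hypothesis $|\XX|=|\XX\setminus\R|=2^\kappa$ is exactly what aligns the upper bound $|\XX|^{|I|}=2^\kappa$ on atoms of the collapsed ultrapower with the target $|\XX\setminus\R|$, so that the final renaming can close the construction into a self-map; without this balance one would only obtain a star map into a distinct superstructure. The existence of a countably incomplete, $\kappa^+$-good ultrafilter on a set of size $\kappa$ is used as a black box from Kunen's construction, and once it is available the verification of transfer, non-triviality, and saturation is routine.
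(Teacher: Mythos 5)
Your proposal is correct and follows essentially the same route as the paper: a bounded ultrapower modulo a countably incomplete $\kappa^+$-good ultrafilter on an index set of size $\kappa$, a Mostowski collapse, and a renaming of atoms enabled by the hypothesis $|\XX|=|\XX\setminus\R|=2^\kappa$, with transfer coming from \L o\'s plus absoluteness of bounded formulas between transitive sets and saturation from goodness. The only differences are cosmetic: the paper builds the renaming bijection into the collapse itself (so it lands directly in $\VV(\XX)$), whereas you first collapse onto a transitive set over fresh atoms $\YY$ (note this is only the internal part of $\VV(\YY)$, not the whole superstructure) and then transport everything by the lifted isomorphism $\widehat{\theta}$.
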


\begin{proof}
Consider the ``bounded" ultrapower $\VV(\XX)_b^{\kappa}/\U:=\bigcup_{n=0}^\infty{V_n(\XX)}^{\kappa}/\,\U$ of
the standard universe modulo a good countably incomplete
ultrafilter $\U$ on the cardinal $\kappa$.\footnote
{~Good ultrafilters $\U$ on an infinite cardinal $\kappa$
are a special class of countably incomplete ultrafilters
that can be characterized in model-theoretic terms by the following property:
``Every ultrapower modulo $\U$ of a first-order structure in a language $|\mathcal{L}|\le\kappa$
is $\kappa^+$-saturated." See \cite[\S 6.1]{ck}.}
Now the ultrafilter $\U$ is regular, so
any $\U$-ultrapower of an infinite set has the greatest possible cardinality, and therefore
$|\XX^\kappa/\U|=|\XX|^\kappa=(2^\kappa)^\kappa=2^\kappa=|\XX|$.\footnote
{~Recall that every good countably incomplete ultrafilter is regular
(see \cite[Prop.\,10.2]{keisler2}).}
Now let $d:\VV(\XX)\to\VV(\XX)_b^{\kappa}/\U$ be the diagonal embedding.
By our assumptions we can pick a bijection $\tau_0:\XX^\kappa/\U\to\XX$ such that
$\tau_0(d(x))=x$ for every $x\in\R$.
Then define the \emph{Mostowski's transitive collapse}
$\tau:\VV(\XX)_b^{\kappa}/\U\to T\subset\VV(\XX)$ by induction as follows:
\begin{itemize}
\item
If $f:\kappa\to\XX$, let $\tau([f]_\U)=\tau_0([f]_\U)$;
\item
If $f:\kappa\to V_n(\XX)$, let $\tau([f]_\U)=\{\tau([g]_\U)\mid\{i\in\kappa\mid g(i)\in f(i)\}\in\U\}$.
\end{itemize}

We observe that if
$\text{range}(f)\subseteq V_{n+1}(\XX)$ and
$\{i\in\kappa\mid g(i)\in f(i)\}\in\U$ then $[g]_\U\in V_n(\XX)^\kappa/\U$,
so the inductive definition is well-posed.
It directly follows from the definition that $T:=\text{range}(\tau)$ is a transitive
subset of $\VV(\XX)$. Finally, let $*=\tau\circ d$.
\[
\begin{tikzcd}[row sep=2.5em]
& \VV(\XX)^\kappa/\,\U \arrow{dr}{\tau} \\
\VV(\XX) \arrow{ur}{d} \arrow{rr}{*} && \ T\subseteq\VV(\XX)
\end{tikzcd}
\]

Note that $\tau(d(\XX))=\{\tau_0([g]_\U)\mid\{i\in\kappa\mid g(i)\in\XX\}\in\U\}=\tau_0(\XX^\kappa/\U)=\XX$,
and hence ${}^*\XX=\XX$.

It is easily verified that
$T=\bigcup_{n\ge 0}{}^*V_n(\XX)=\bigcup_{X\in\VV(\XX)}\hX={}^*\VV(\XX)$.
Now for every $n$, the diagonal embedding $d_n:V_n(\XX)\to V_n(\XX)^\kappa/\U$
is an elementary embedding, and so it follows that
$d=\bigcup_{n\ge 0}d_n:\VV(\XX)\to\VV(\XX)_b^\kappa/\U$ is a bounded
elementary embedding. Since $\tau$ is an isomorphism,
we can conclude that the map $*:(\VV(\XX),\in)\to(T,\in)$ is an elementary embedding.

Now recall the basic fact that bounded quantifier formulas are absolute
between transitive models (see, \emph{e.g.}, \cite[Ch. 12]{je}).
This means that if $T\subset T'$ are transitive,
then for every bounded quantifier formula $\varphi(x_1,\ldots,x_n)$
and for all $t_1,\ldots,t_n\in T$, one has the equivalence
$$T\models\varphi(t_1,\ldots,t_n)\Leftrightarrow T'\models\varphi(t_1,\ldots,t_n).$$

So, the \emph{transfer principle} holds for
the inclusion $\imath:T\to\VV(\XX)$,
and hence also for the map $*:\VV(\XX)\to\VV(\XX)$ seen as
taking values in the whole superstructure $\VV(\XX)$.
Note that, by the definition, we have
${}^*x=\tau(d(x))=x$ for every $x\in\R$.

Finally, the hypothesis that $\U$ is a countably incomplete good ultrafilter
on $\kappa$ guarantees that for every $n$
the ultrapower $V_n(\XX)^\kappa/\U$ is a $\kappa^+$-saturated $\in$-structure
in model-theoretic sense. If $\G\subseteq\hA$ is a bounded
family of internal sets, $|\G|\leq\kappa$,
and $n$ is such that
$A\subseteq V_n(\XX)$, then clearly $\G\subset\tau[V_n(\XX)^\kappa/\U]$.
Since $\G$ has the finite intersection property, the model-theoretic type
$$p(x):=\left\{x\in\bigcap\G_0\mid\G_0\subseteq\G\,\wedge\,|\G_0|<\omega\right\}$$
is consistent.
By $\kappa^+$-saturation of $V_n(\XX)^\kappa/\U$ and $|p(x)|\leq\kappa$, it follows
that $p(x)$ is satisfiable in $V_n(\XX)^\kappa/\U$.
Hence, $\bigcap_{B\in\G}B\ne\emptyset$, as desired.
\end{proof}

In the sequel we will work in a model of nonstandard analysis as given by
the previous theorem. So, we will work with a star map
$$*:\VV(\XX)\to\VV(\XX)$$
from a superstructure into itself, so that
there will be no distinction between the ``standard" and the "nonstandard" universe.
This framework has several advantages.

To begin with, since $\text{range}(*)\subseteq\VV(\XX)$,
the star map can be applied repeatedly without extra effort.
Besides, also external sets that are considered in the practice
belong to the superstructure, and so one can apply the star map to them.
For instance, if $\mu$ is a counting measure on a
hyperfinite subset of $\hR\subset\XX$,
then it is easily verified that the corresponding
Loeb measure space $\mathcal{L}$ belongs to the superstructure $\VV(\XX)$,
and so one can consider its nonstandard extension ${}^*\!\mathcal{L}$.

Note that when $*$ is applied
the second time, it is still a bounded elementary
embedding from $\VV(\XX)$ to $^*\VV(\XX)$; on the other hand,
the restriction $*\res ^*\VV(\XX)$ is now a bounded elementary
embedding from the internal universe $^*\VV(\XX)$
into the ``second" internal universe
$$^{**}\VV(\XX)=\bigcup_{X\in\VV(\XX)}{}^{**}\!X.$$

As a consequence, we obtain another interesting feature of our framework, namely
the fact that \emph{transfer} also applies to formulas that contain external relations as predicates.
Indeed, if $\A\subseteq\VV(\XX)\setminus\,^*\VV(\XX)$
is any family of external sets made of internal elements
(that is, if $a\subseteq\,^*\VV(\XX)$ for every $a\in\A$),
then one can view the second application of the star map as a bounded elementary embedding
from the structure $(^*\VV(\XX);A)_{A\in\A}$ to the structure $(^{**}\VV(\XX);\,\hA)_{A\in\A}$.\footnote
{~Here $(^*\VV(\XX);A)_{A\in\A}$ is the expanded model obtained by adding a unary relation $A$
to the model $^*\VV(\XX)$ for every $A\in\A$; and similarly for $(^{**}\VV(\XX);\,\hA)_{A\in\A}$.}
For example, the restriction
$$*\res{}^*\VV(\XX):({}^*\VV(\XX);\R)\to(^{**}\VV(\XX);\hR)$$
is a bounded elementary embedding,
although $\R$, as a unary relation in $^*\VV(\XX)$,
is not an element of $^*\VV(\XX)$.
We remark that fundamental ``external" objects in nonstandard analysis,
such as the standard part map or the Loeb measure on a hyperfinite set,
are definable in the structure $({}^*\VV(\XX);\R)$.

As another example, consider the
bounded elementary embedding given by
the restriction of the star map seen as taking values in the internal universe:
$$*:\VV(\XX)\to{}^*\VV(\XX).$$
For every $X\in\VV(\XX)$, the restriction $*\res X$, as a subset of $X\times\hX$,
is an element of $\VV(\XX)$, and so the star-map $*$ can be applied to it
so as to obtain a map $^*(*\res X):\hX\to{}^{**}\!X$.
This will be a key property in our definitions of the internal star maps
given in \S\ref{sec-bistarmap} and \S\ref{sec-internalstarmaps}.

\smallskip
Finally, observe that in our framework where the star map
is defined from a universe into itself,
the saturation property transfers to an internal version of saturation.

\begin{proposition}
Assume that $\kappa$-saturation holds for an infinite cardinal $\kappa$.
If $\kappa\subseteq\XX$ then the following
``internal ${}^*\kappa$-saturation" holds:
\begin{itemize}
\item
Let $f:{}^*\kappa\to{}^*\mathcal{P}(A)$ be
an internal ${}^*\kappa$-sequence of internal subsets of a set $\hA\in{}^*\VV(\XX)$,
\emph{i.e.}\,$f\in{}^*\text{Fun}(\kappa,\mathcal{P}(A))$.
If for every ${}^*$finite $F\subset{}^*\kappa$ the intersection
$\bigcap_{\alpha\in F}f(\alpha)\ne\emptyset$ then the
whole intersection $\bigcap_{\alpha\in{}^*\kappa}f(\alpha)\ne\emptyset$.\footnote
{~Recall that $F$ is a ${}^*$finite subset of ${}^*\kappa$ means that
$F\in{}^*\text{Fin}(\kappa)$, \emph{i.e.} $F$
belongs to the star extension of the family of finite subsets of $\kappa$.}
\end{itemize}
\end{proposition}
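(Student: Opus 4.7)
The natural strategy is to reduce internal ${}^*\kappa$-saturation to the external $\kappa$-saturation by exploiting the star-image injection $*\colon \kappa \hookrightarrow {}^*\kappa$, which is well-defined because the hypothesis $\kappa \subseteq \XX$ places every $\alpha<\kappa$ inside the standard universe, and then to promote the resulting realizer from the external image $*[\kappa]$ to all of ${}^*\kappa$ by an internal-definability argument. Concretely, I would first introduce the external family $\G := \{f({}^*\alpha) : \alpha < \kappa\}$ of internal subsets of $\hA$. Since any externally finite collection $\{{}^*\alpha_1, \ldots, {}^*\alpha_n\}\subset {}^*\kappa$ is automatically ${}^*$finite (its internal cardinality being the standard integer $n\in\N\subseteq\hN$), the ${}^*$finite FIP hypothesis forces $\bigcap_{i=1}^n f({}^*\alpha_i) \ne \emptyset$, so $\G$ enjoys the ordinary external finite intersection property.

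Next, I would invoke external saturation ($\kappa$-saturation, supplemented by the $\kappa^+$-saturation provided by Theorem~\ref{ultrapowermodel} to cover the borderline case $|\G|=\kappa$) to extract a realizer $x \in \bigcap_{\alpha<\kappa} f({}^*\alpha)$. At this point $x \in \hA$ is an element that simultaneously belongs to $f(\beta)$ for every $\beta$ in the external image $*[\kappa] \subseteq {}^*\kappa$.

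The crucial remaining step, and where the main obstacle lies, is to promote $x$ to a realizer of the full internal type, i.e.\,to show $x \in f(\beta)$ for \emph{every} $\beta \in {}^*\kappa$, not only for $\beta \in *[\kappa]$. To this end, consider the set $S := \{\beta \in {}^*\kappa : x \in f(\beta)\}$, which is internal by internal definability from the internal parameters $x \in \hA$ and $f \in {}^*\mathrm{Fun}(\kappa,\mathcal{P}(A))$, and which contains $*[\kappa]$ by the previous step. One must conclude $S = {}^*\kappa$. Since $*[\kappa]$ is in general a \emph{proper} external subset of ${}^*\kappa$, a purely external argument cannot suffice; the step must exploit the ${}^*$finite FIP hypothesis in its full strength (including hyperfinite index sets $F$), combining it with an overspill-style principle that forces the internal set $S$ to exhaust ${}^*\kappa$. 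Carrying out this overspill-style extension is the delicate technical heart of the proof.
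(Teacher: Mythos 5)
Your reduction to external saturation (steps one and two) is fine, but the ``promotion'' step that you yourself flag as the technical heart is not merely delicate: it is left unproved, and under the reading you adopt it cannot be proved. Knowing that the internal set $S=\{\beta\in{}^*\kappa\mid x\in f(\beta)\}$ contains the external copy $\{{}^*\alpha\mid\alpha<\kappa\}$ gives no overspill-type leverage to conclude $S={}^*\kappa$. Concretely, take $A=\kappa$ and $f={}^*g$ where $g(\alpha)=\kappa\setminus\{\alpha\}$. By \emph{transfer}, $f(\beta)={}^*\kappa\setminus\{\beta\}$ for every $\beta\in{}^*\kappa$, and every ${}^*$finite subfamily has nonempty intersection (transfer of the corresponding standard fact); yet any realizer $x$ of your external type is a point of ${}^*\kappa\setminus\{{}^*\alpha\mid\alpha<\kappa\}$, and for such an $x$ one gets $S={}^*\kappa\setminus\{x\}$, a proper internal subset, while $\bigcap_{\beta\in{}^*\kappa}f(\beta)=\emptyset$. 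So if $f$ is allowed to be an arbitrary internal function whose values are internal subsets of $\hA$ (i.e.\ $f\in{}^*\text{Fun}(\kappa,\PP(A))$, which is how you read the hypothesis), the asserted principle itself fails, and no refinement of the overspill idea can close the gap you leave open.

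The paper's proof is genuinely different and never extends a realizer. Since the star map sends $\VV(\XX)$ into itself, the $\kappa$-saturation property for $\kappa$-indexed families --- ``for every $g\in\text{Fun}(\kappa,{}^*\PP(A))$, if $\bigcap_{\alpha\in F}g(\alpha)\ne\emptyset$ for every $F\in\text{Fin}(\kappa)$, then $\bigcap_{\alpha\in\kappa}g(\alpha)\ne\emptyset$'' --- is a single bounded quantifier sentence whose parameters $\kappa$, $\text{Fun}(\kappa,{}^*\PP(A))$, $\text{Fin}(\kappa)$, ${}^*\PP(A)$, $\hA$ all belong to $\VV(\XX)$; this is exactly what the hypothesis $\kappa\subseteq\XX$ is for. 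Applying \emph{transfer} to this whole implication yields the internal ${}^*\kappa$-saturation in one stroke: the index set becomes ${}^*\kappa$, the finite index sets become the ${}^*$finite ones, the sequence becomes an element of ${}^*\text{Fun}(\kappa,{}^*\PP(A))$ --- so its values are internal subsets of ${}^{**}\!A$, one star higher than in your reading --- and the conclusion is carried over wholesale, with no need to pass from $\{{}^*\alpha\mid\alpha<\kappa\}$ to ${}^*\kappa$ element by element. Your difficulty is precisely the symptom of interpreting the conclusion one level too low; the statement the transfer argument establishes is the $*$-transform of the saturation property, not a saturation principle for first-level internal families. (Your side remark that the borderline cardinality $|\G|=\kappa$ requires $\kappa^+$-saturation, as supplied by Theorem \ref{ultrapowermodel}, is correct but peripheral to the actual argument.)
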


\begin{proof}
It is a straightforward application of \emph{transfer}
to the the $\kappa$-saturation property.
\end{proof}

\section{The internal star map}\label{sec-bistarmap}

In this section we introduce the ``internal star map" $\circledast$,
and give foundations to its use, starting from the proof of the \emph{transfer principle}
for $\circledast$. In some precise sense, we can say that the internal star map is the star map
as seen by someone who lives within the internal universe:
$${}^*\VV(\XX)=\bigcup_{X\in\VV(\XX)}\hX.$$

Then we will generalize the notion of internal star map to $n$-th internal star map
for every $n\in\N$, and we will consider iterations and compositions of internal
star maps with the ``external'' star map.

\smallskip
Let us start by focusing on the \emph{double} hyper-extension ${}^{**}\!X={}^*({}^*\!X)$
of an arbitrary object $X\in\VV(\XX)$. We observe that there are
two different canonical ways to embed
the single hyper-extension $\hX$ into ${}^{**}X$.

\smallskip
Clearly, the first one is provided by the star map:
$$*:\xi\longmapsto{}^*\xi.$$

However, we will see next that there is another canonical
way of embedding $\hX$ into ${}^{**}\!X$.

\smallskip
\begin{itemize}
\item[$\blacklozenge$]
For convenience of notation, in the following we will use the symbol $\s$ for the star map $*$.
\end{itemize}

For any set $X\in\VV(\XX)$, consider the restriction of the star map to $X$:
$$\s\res X:X\to\hX$$

The crucial observation here is that
the function $\s\res X$ is itself an element of our universe $\VV(\XX)$,
and so it makes sense to consider its hyper-extension ${}^*(\s\res X):\hX\to{}^{**}X$.

We stress the fact that in general ${}^*(\s\res X)(\xi)\ne (\s\res\hX)(\xi)$
(see Proposition \ref{star=dast} below).

\begin{definition}\label{def-bistarmap}
The \emph{internal star map}
$\circledast:{}^*\VV(\XX)\to\VV(\XX)$ with domain the internal
universe ${}^*\VV(\XX)$, is defined by putting:
$$\circledast:=\bigcup_{X\in\VV(\XX)}{}^*(\s\res X).$$
\end{definition}

In other words, if $\xi\in\hX$ then ${}^\circledast\xi$
is the image of $\xi$ under the hyper-extension of the function $\s\res X$.
The element ${}^\circledast\xi$
is called the \emph{internal star-extension} of $\xi$.\footnote
{~As done with the star map, for simplicity we will
write ${}^\circledast x$ to mean $\circledast(x)$.}

\smallskip
The above definition is well-posed.
Indeed, if $\xi\in\hX$ and $\xi\in\hY$, then
it is readily seen by the definitions that both functions
$\s\res X$ and $\s\res Y$ extend the function $\s\res(X\cap Y)$
and so, by {\it transfer}, ${}^*(\s\res X)(\xi)={}^*(\s\res(X\cap Y))(\xi)={}^*(\s\res Y)(\xi)$.

\smallskip
We remark that the map $\circledast$ makes sense because we
assumed that our star map is defined from a universe into itself.

\smallskip
The composition of the star map $*$ with the internal star map $\circledast$ equals
the double star map $**$.

\begin{proposition}\label{double}
For every $x\in\VV(\XX)$ one has that ${}^{\circledast *}x={}^{**}x$.
\end{proposition}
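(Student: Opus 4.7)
The plan is to unwind the definitions and apply the standard functional identity ${}^*f({}^*a) = {}^*(f(a))$, which is itself a consequence of transfer. The framework here is perfectly set up for this: because $\s$ maps $\VV(\XX)$ into itself, each restriction $\s\res X$ is a genuine element of the superstructure and can therefore be hyper-extended.

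First I would fix $x\in\VV(\XX)$ and choose some $X\in\VV(\XX)$ with $x\in X$ (for instance, $X=V_n(\XX)$ for any $n$ large enough so that $x\in V_n(\XX)$). Next I would verify that $\s\res X$ is an element of $\VV(\XX)$: since $X\in V_n(\XX)$ and $\hX\in\VV(\XX)$, the Cartesian product $X\times\hX$ lies in some $V_m(\XX)$, and $\s\res X\subseteq X\times\hX$ is therefore an element of $V_{m+1}(\XX)$. This observation is exactly where we use the assumption that the star map takes values in the same superstructure; without it, $\s\res X$ would not be available as an input to $\s$.

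Then I would invoke the standard consequence of transfer that for any function $f\in\VV(\XX)$ and any $a\in\text{dom}(f)$, one has ${}^*f({}^*a)={}^*(f(a))$. Applied to $f=\s\res X$ and $a=x$, this yields
$${}^*(\s\res X)({}^*x)={}^*\bigl((\s\res X)(x)\bigr)={}^*({}^*x)={}^{**}x.$$
Finally I would appeal to Definition~\ref{def-bistarmap}: since ${}^*x\in\hX$, the internal star map acts on ${}^*x$ as ${}^\circledast({}^*x)={}^*(\s\res X)({}^*x)$. Combining this with the previous identity gives ${}^{\circledast *}x={}^{**}x$, as required.

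There is no real obstacle here; the only point worth being careful about is the well-posedness of the argument, namely that the value ${}^*(\s\res X)({}^*x)$ does not depend on the choice of $X\ni x$. But this is precisely the well-posedness of $\circledast$ already argued right after Definition~\ref{def-bistarmap}, so it needs no further work. The proof is essentially a one-line unpacking of the definition once one recognizes that applying $\s$ to $\s\res X$ is legal in this framework.
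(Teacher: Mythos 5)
Your proof is correct and follows essentially the same route as the paper: both unwind Definition~\ref{def-bistarmap} and apply the transfer identity ${}^*f({}^*a)={}^*(f(a))$ to $f=\s\res X$, $a=x$, giving ${}^\circledast({}^*x)={}^*(\s\res X)({}^*x)={}^*((\s\res X)(x))={}^{**}x$. The extra remarks on $\s\res X\in\VV(\XX)$ and well-posedness are fine but not needed beyond what the paper already established.
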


\begin{proof}
By definition, ${}^\circledast({}^*x)={}^*(\s\res X)({}^*x)$, and the
latter expression equals ${}^*((\s\res X)(x))={}^*({}^*x)$.
\end{proof}

The star map and the internal star map only coincide on hyper-extensions.

\begin{proposition}\label{star=dast}
Let $\xi,\eta\in\hX$. Then ${}^\circledast\xi={}^*\eta$ if and only if
there exists $x\in X$ such that $\xi=\eta={}^*x$.
\end{proposition}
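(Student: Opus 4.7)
The plan is as follows. The $(\Leftarrow)$ direction is a one-line consequence of Proposition~\ref{double}: if $\xi=\eta={}^*x$ with $x\in X$, then
${}^\circledast\xi={}^\circledast({}^*x)={}^{**}x={}^*({}^*x)={}^*\eta.$

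For the $(\Rightarrow)$ direction, I would exploit the fact that although the star map $\s$ is a class function, its restriction $\s\res X$ is a genuine set belonging to $\VV(\XX)$, and so \emph{transfer} applies to it and to its basic set-theoretic properties. Two such properties are the key: first, $\s\res X$ is injective (inherited from the injectivity of any bounded elementary embedding); second, its range is exactly $\s[X]=\{{}^*x\mid x\in X\}\subseteq\hX$, which is itself an element of $\VV(\XX)$. By \emph{transfer}, the hyper-extension ${}^*(\s\res X)=\circledast\res\hX$ is injective on $\hX$, and its range equals ${}^*(\s[X])\subseteq{}^{**}\!X$.

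Now suppose ${}^\circledast\xi={}^*\eta$ with $\xi,\eta\in\hX$. The first move is to show that $\eta$ must itself be of the form ${}^*x$ for some $x\in X$. Since ${}^\circledast\xi$ lies in the range ${}^*(\s[X])$ of $\circledast\res\hX$, we have ${}^*\eta\in{}^*(\s[X])$. Applying \emph{transfer} to the atomic bounded formula ``$z\in S$'' with constants $\eta\in\VV(\XX)$ and $S=\s[X]\in\VV(\XX)$, we deduce $\eta\in\s[X]$, so $\eta={}^*x$ for some $x\in X$. Proposition~\ref{double} then gives ${}^*\eta={}^{**}x={}^\circledast({}^*x)$, and the injectivity of $\circledast\res\hX$ forces $\xi={}^*x$; hence $\xi=\eta={}^*x$, as required.

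The only delicate point I anticipate is verifying that $\s\res X$ and $\s[X]$ really are elements of $\VV(\XX)$, so that \emph{transfer} can be legitimately invoked on them as constants. This is handled by the observation that the construction in Theorem~\ref{ultrapowermodel} sends each level $V_n(\XX)$ into itself, so that both $\s\res X$ and $\s[X]$ sit at a finite level of the superstructure. Once this housekeeping is in place, the argument reduces to two routine applications of \emph{transfer} (for the range and the injectivity of $\s\res X$) together with Proposition~\ref{double}, with no further obstacle.
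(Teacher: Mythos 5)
Your proof is correct, and it is a minor reorganization of the paper's own transfer argument rather than a fundamentally different one. The paper handles the forward direction with a single transfer of the $\eta$-dependent fiber $\Lambda=\{x\in X\mid (\s\res X)(x)=\eta\}\subseteq X$: the hypothesis ${}^\circledast\xi={}^*\eta$ says exactly that $\xi\in{}^*\Lambda$, so $\Lambda\ne\emptyset$, and injectivity of $\s\res X$ applied at the standard level gives $\Lambda=\{x\}$, whence ${}^*\Lambda=\{{}^*x\}$ forces $\xi={}^*x=\eta$. You instead transfer two $\eta$-free facts about $\s\res X$ (that its range is the external set $\s[X]$ and that it is injective), then use one atomic transfer of $\eta\in\s[X]$ to recognize $\eta$ as ${}^*x$, and finish with Proposition \ref{double} and injectivity of ${}^*(\s\res X)$. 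Both arguments rest on the same enabling feature of the framework, namely that $\eta\in\hX$, $\s\res X$, and sets such as $\s[X]$ or $\Lambda$ are elements of $\VV(\XX)$ and hence legitimate parameters for transfer; your version spends an extra transfer but confines the $\eta$-dependence to a single atomic formula, while the paper's fiber does everything at once. One small remark on your ``housekeeping'': level-preservation of the star map is more than you need, since it suffices that $\s\res X\subseteq X\times\hX$ and $\s[X]\subseteq\hX$, and the superstructure contains every subset of any of its elements (this is the observation the paper itself makes when it notes that $\s\res X$, as a subset of $X\times\hX$, belongs to $\VV(\XX)$).
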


\begin{proof}
If $\xi=\eta={}^*x$ for some $x\in X$ then, by using the previous proposition,
we have ${}^\circledast\xi={}^\circledast({}^*x)={}^*({}^*x)={}^*\eta$.

Conversely, note that ${}^*\eta={}^\circledast\xi={}^*(\s\res X)(\xi)\Leftrightarrow
\xi\in{}^*\Lambda$ where
$$\Lambda:=\{x\in X\mid (\s\res X)(x)=\eta\}.$$
Since ${}^*\Lambda\ne\emptyset$, we can pick
an element $x\in\Lambda\ne\emptyset$ such that $(\s\res X)(x)={}^*x=\eta$.
The map $\s\res X$ is 1-1, and so such an $x$ is unique and $\Lambda=\{x\}$.
Finally, $\xi\in{}^*\Lambda=\{{}^*x\}$ implies that $\xi={}^*x$ and
we can conclude that $\xi=\eta={}^*x$.
%
%
%
\end{proof}

%

\begin{remark}\label{pullback}
Propositions \ref{star=dast} states that the following
commutative diagram is the \emph{pullback} in the category of sets
of the two embeddings $*,\circledast:\hX\to{}^{**}\!X$:
$$\begin{tikzcd}
X \arrow{r}{*}
\arrow{d}{*} &
{}^*\!X \arrow{d}{\circledast} \\
{}^*\!X \arrow{r}{*} & {}^{**}\!X
\end{tikzcd}$$
\end{remark}

\smallskip
The internal star map $\circledast$ satisfies the transfer principle.

\begin{theorem}[Internal Transfer Principle]\label{dualtransfer}
Let $\sigma(x_1,\ldots,x_k)$ be a bounded quantifier formula
and let $\xi_1,\ldots,\xi_k\in{}^*\VV(\XX) $ be internal elements. Then
$$\sigma(\xi_1,\ldots,\xi_k)\ \Longleftrightarrow\
\sigma({}^\circledast\xi_1,\ldots,{}^\circledast\xi_k).$$
\end{theorem}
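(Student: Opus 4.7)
The plan is to derive the Internal Transfer Principle for $\circledast$ directly from the ordinary Transfer Principle for $*$, by compressing everything into a single auxiliary bounded statement about the restriction function $\s\res X$. First I would choose a set $X\in\VV(\XX)$ large enough that $\xi_1,\ldots,\xi_k\in\hX$ simultaneously; for instance, picking $n$ with $\xi_i\in{}^*V_n(\XX)$ for each $i$ and setting $X:=V_n(\XX)$. The well-definedness of $\circledast$ established just after Definition \ref{def-bistarmap} ensures that ${}^\circledast\xi_i$ does not depend on this choice.

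Next I would put $f:=\s\res X\in\VV(\XX)$ and verify that the following statement holds in $\VV(\XX)$:
$$\forall x_1,\ldots,x_k\in X:\ \sigma(x_1,\ldots,x_k)\,\Longleftrightarrow\,\sigma\bigl(f(x_1),\ldots,f(x_k)\bigr).$$
This is immediate from the ordinary Transfer Principle applied tuplewise, since $f(x_i)={}^*x_i$ and $*$ is a bounded elementary embedding from $\VV(\XX)$ into itself. Call this universal statement $\Psi(X,f)$. It is a bounded quantifier formula in the parameters $X$ and $f$: the outer universal block is bounded by $X$, and the internal quantifiers coming from $\sigma$ remain bounded after the substitutions, because $\sigma$ itself is bounded and $f$ enters only as a name for a fixed element of $\VV(\XX)$.

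The decisive step is then to apply the ordinary Transfer Principle to $\Psi(X,f)$ itself. Since ${}^*X=\hX$ and ${}^*f={}^*(\s\res X)$, transfer yields
$$\forall \eta_1,\ldots,\eta_k\in\hX:\ \sigma(\eta_1,\ldots,\eta_k)\,\Longleftrightarrow\,\sigma\bigl({}^*\!f(\eta_1),\ldots,{}^*\!f(\eta_k)\bigr).$$
By Definition \ref{def-bistarmap}, ${}^*\!f(\eta_i)={}^*(\s\res X)(\eta_i)={}^\circledast\eta_i$. Specializing to $\eta_i=\xi_i$ delivers the required equivalence.

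The only place I expect friction is the boundedness bookkeeping in the middle step: one has to be convinced that substituting the terms $f(x_i)$ for the free variables of $\sigma$ produces a formula whose quantifiers are still bounded by terms definable in $\VV(\XX)$, so that ordinary Transfer applies legitimately to $\Psi(X,f)$. Once that syntactic check is in place, the whole argument collapses to a single application of ordinary Transfer to the function $\s\res X$, which is precisely the construction motivating Definition \ref{def-bistarmap}.
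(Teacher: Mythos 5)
Your argument is correct and is essentially the paper's own proof: both rest on the pointwise equivalence $\sigma(x_1,\ldots,x_k)\Leftrightarrow\sigma({}^*x_1,\ldots,{}^*x_k)$ for standard tuples, followed by a second application of ordinary transfer with the parameter $\s\res X$ to lift that equivalence to internal tuples --- the paper merely packages the universally quantified equivalence as the definable truth set $\Gamma\subseteq X^k$ and transfers that set, whereas you transfer the sentence $\Psi(X,f)$ directly. The boundedness bookkeeping you flag is needed in both versions alike (e.g.\ bound the values $f(x_i)$ by the parameter $\hX=\mathrm{range}(f)$), so there is no gap.
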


\begin{proof}
Pick $X$ such that $\xi_1,\ldots,\xi_k\in\hX$.
By the definition of internal star map,
$\sigma({}^\circledast\xi_1,\ldots,{}^\circledast\xi_k)$ is the same
as $\sigma({}^*(\s\res X)(\xi_1),\ldots,{}^*(\s\res X)(\xi_k))$ and so
$$\sigma({}^\circledast\xi_1,\ldots,{}^\circledast\xi_k)\ \Longleftrightarrow\
(\xi_1,\ldots,\xi_k)\in{}^*\Gamma$$
where $\Gamma:=\{(x_1,\ldots,x_k)\in X^k\mid
\sigma((\s\res X)(x_1),\ldots,(\s\res X)(x_k))\}$.
Now, for all $x_1,\ldots,x_k\in X$, the formula
$\sigma((\s\res X)(x_1),\ldots,(\s\res X)(x_k))$ is the same as
$\sigma({}^*x_1,\ldots,{}^*x_k)$ which in turn, by \emph{transfer},
is equivalent to $\sigma(x_1,\ldots,x_k)$, and so
$\Gamma=\{(x_1,\ldots,x_k)\in X^k\mid \sigma(x_1,\ldots,x_k)\}$.
Finally, notice that $(\xi_1,\ldots,\xi_k)\in{}^*\Gamma\Leftrightarrow
\sigma(\xi_1,\ldots,\xi_k)$.
\end{proof}

\subsection{The internal star map on the hyperreal numbers}

\

\noindent
\smallskip
Since we assumed that ${}^*r=r$ for all real numbers $r$,
the internal star map $\circledast:\hR\to{}^{**}\R$
on the hyperreals is just the inclusion.

\begin{proposition}\label{circledastidentityonhR}
${}^\circledast\xi=\xi$ for every $\xi\in\hR$.
\end{proposition}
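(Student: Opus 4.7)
The plan is to apply the definition of $\circledast$ directly with the choice $X = \R$, and exploit the hypothesis that the star map fixes every real number in order to identify $\s \res \R$ explicitly.

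First I would unfold the definition: since $\xi \in \hR$, Definition \ref{def-bistarmap} gives
\[
{}^\circledast \xi \;=\; {}^*(\s \res \R)(\xi).
\]
Next I would observe that, as a set of ordered pairs, $\s \res \R = \{(r, r) \mid r \in \R\}$. Indeed, $(\s \res \R)(r) = \s(r) = {}^*r = r$ for every $r \in \R$ by the standing assumption that ${}^*r = r$ on the reals; in other words, $\s \res \R$ is nothing but the identity function on $\R$.

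Now I would apply the transfer principle to the bounded-quantifier characterization of this identity function, namely the two statements ``$\forall r \in \R,\ (r, r) \in \s \res \R$'' and ``$\forall p \in \s \res \R,\ \exists r \in \R,\ p = (r, r)$''. Transfer yields the corresponding statements with $\hR$ and ${}^*(\s \res \R)$ in place of $\R$ and $\s \res \R$, so that
\[
{}^*(\s \res \R) \;=\; \{(\xi, \xi) \mid \xi \in \hR\},
\]
i.e.\ ${}^*(\s \res \R)$ is the identity function on $\hR$. Substituting back, ${}^\circledast \xi = {}^*(\s \res \R)(\xi) = \xi$, as required.

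There is really no serious obstacle: the only point requiring a moment of care is the passage from ``$\s \res \R$ acts as the identity on elements'' to ``$\s \res \R$ equals the identity function as a set of ordered pairs'', which is just a matter of unpacking the set-theoretic definition of a function; once this is done, transfer delivers the conclusion immediately.
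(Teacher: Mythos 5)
Your proof is correct and follows essentially the same route as the paper: the paper simply transfers the single bounded formula ``$\forall x\in\R\ (\s\res\R)(x)=x$'' to get ${}^\circledast\xi={}^*(\s\res\R)(\xi)=\xi$ for all $\xi\in\hR$. Your detour through the graph $\{(r,r)\mid r\in\R\}$ is just a slightly more explicit packaging of the same transfer argument.
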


\begin{proof}
By \emph{transfer} from the bounded quantifier formula ``$\forall x\in\R\ (\s\res\R)(x)=x$''
we obtain that ``$\forall \xi\in\hR\ {}^\circledast\xi={}^*(\s\res\R)(\xi)=\xi$''.
\end{proof}

Note that the above result is consistent with the intuition that $\circledast$
is the star map as seen ``internally''.
Indeed, the hyperreal numbers $\hR$ are the real numbers
as seen by someone who lives within the internal universe;
since the star map does not move real numbers,
it follows that the internal star map $\circledast$ does not move
elements of $\hR$.

\smallskip
On the hyperreal line $\hR$, one considers the order relation
given by the hyper-extension ${}^*\!\!\!<$ of the
order $<$ on $\R$. Since ${}^*r=r$ for every $r\in\R$,
the relation ${}^*\!\!\!<$ is an extension of $<$ and so,
to simplify matters, one usually omits the star symbol
and directly writes $<$ also for the order on $\hR$.
In the double hyper-extension ${}^{**}\R$,
the order relations ${}^{\circledast *}\!\!\!<$ and ${}^{**}\!\!\!<$
coincide, and so also in this case one can omit the star and internal star symbols and
directly write $<$ with no ambiguity.

\begin{proposition}\label{circledastreal}
\

\begin{enumerate}
\item
If $\xi,\zeta\in\hR_+$ are positive infinite,
then ${}^*\xi>\zeta$.
\item
If $\varepsilon,\vartheta\in\hR_+$ are positive infinitesimals,
then ${}^*\varepsilon<\vartheta$.
\end{enumerate}
\end{proposition}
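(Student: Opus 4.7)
The plan is to invoke the bounded elementary embedding
$*\res{}^*\VV(\XX)\colon({}^*\VV(\XX);\R)\to({}^{**}\VV(\XX);\hR)$
highlighted just before Proposition~\ref{circledastidentityonhR}: in this expanded language one can transfer bounded quantifier formulas that mention $\R$ as a unary predicate. The key observation is that the otherwise external conditions ``positive infinite'' and ``positive infinitesimal'' admit such a bounded encoding, and transferring it will produce precisely the inequalities in (1) and (2).

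For part (1), I would encode the hypothesis that $\xi$ is positive infinite as the bounded formula
$$\varphi(\xi)\;:\equiv\;\xi>0\;\wedge\;\forall x\in\hR\,\bigl(x\in\R\,\wedge\, x>0\,\to\, x<\xi\bigr),$$
whose only quantifier is bounded by the internal set $\hR\in{}^*\VV(\XX)$. Applying the bounded elementary embedding above converts $\varphi(\xi)$ into
$${}^*\xi>0\;\wedge\;\forall y\in{}^{**}\R\,\bigl(y\in\hR\,\wedge\, y>0\,\to\, y<{}^*\xi\bigr).$$
Since $\hR\subseteq{}^{**}\R$---recall that ${}^\circledast\zeta=\zeta\in{}^{**}\R$ for every $\zeta\in\hR$ by Proposition~\ref{circledastidentityonhR}---and $\zeta\in\hR_+$, instantiating $y=\zeta$ yields ${}^*\xi>\zeta$.

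Part (2) is handled symmetrically: I would encode ``$\varepsilon$ is a positive infinitesimal'' as
$$\psi(\varepsilon)\;:\equiv\;\varepsilon>0\;\wedge\;\forall x\in\hR\,\bigl(x\in\R\,\wedge\, x>0\,\to\,\varepsilon<x\bigr),$$
then transfer across the same embedding to obtain ${}^*\varepsilon<y$ for every $y\in\hR_+$, and finally instantiate $y=\vartheta$.

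The only subtle point---and what I expect to be the main (mild) obstacle---is the legitimacy of transferring a formula in which the external set $\R$ appears as a predicate. This is precisely what the discussion preceding Proposition~\ref{circledastidentityonhR} establishes, so no new work is required; one only needs to check that the quantifier $\forall x\in\hR$ is genuinely bounded in ${}^*\VV(\XX)$ (it is, since $\hR={}^*\R$ is internal) and that the order symbol $<$ on $\hR$ transfers to the order on ${}^{**}\R$, as explicitly noted just before the proposition. Notably, neither saturation nor enlargement is invoked: the argument is pure transfer, applied in the expanded language.
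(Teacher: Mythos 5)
Your proof is correct, and at its core it is the same pure-transfer argument as the paper's; only the packaging differs. The paper exploits directly the fact that the star map is defined on the whole universe $\VV(\XX)$, so the external set $\R_+$ and the internal point $\xi\in\hR$ can both serve as parameters: transferring the true statement ``$\forall x\in\R_+\,(\xi>x)$'' yields ``$\forall x\in\hR_+\,({}^*\xi>x)$'', and instantiating at $\zeta$ gives (1); part (2) is the same with the inequality reversed. You instead route the identical idea through the expanded-language embedding $*\res{}^*\VV(\XX):({}^*\VV(\XX);\R)\to({}^{**}\VV(\XX);\hR)$, bounding the quantifier by the internal set $\hR$ and treating $\R$ as a unary predicate; this is legitimate by the discussion in \S\ref{sec-basics}, and your instantiation step is sound since $\hR\subseteq{}^{**}\R$ (as you note, via Proposition~\ref{circledastidentityonhR}, or by transferring the inclusion $\R\subseteq\hR$). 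So nothing is missing; what the paper's version buys is economy --- no expanded structure is needed, because $\R_+$ can simply be starred as an element of $\VV(\XX)$ --- while your version illustrates that the predicate-expansion mechanism alone already suffices, at the cost of checking the small points about $\hR\subseteq{}^{**}\R$ and the compatibility of the orders, which the direct route absorbs automatically.
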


\begin{proof}
(1). By \emph{transfer} from ``$\forall x\in\R_+\ (\xi>x)$" one obtains
that ``$\forall x\in\hR_+\ ({}^*\xi>x)$" and so, in particular,
${}^*\xi>\zeta$.

(2). Proceed as above, by applying \emph{transfer} to the statement
``$\forall x\in\R_+\ (\varepsilon<x)$".
\end{proof}

In the case of natural numbers, we have the following properties,
that will be relevant in the sequel:

\begin{proposition}
\

\begin{enumerate}
\item
${}^\circledast\nu=\nu$ for every $\nu\in\hN$.
\item
${}^*\nu>\nu$ for every $\nu\in\hN\setminus\N$.
\item
$\hN$ is an initial segment of ${}^{**}\N$, \emph{i.e.}
$\nu<\Omega$ for every $\nu\in\hN$ and every $\Omega\in{}^{**}\N\setminus\hN$.
\end{enumerate}
\end{proposition}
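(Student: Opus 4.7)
The plan is to prove each of the three parts by applying the transfer principle to a carefully chosen bounded formula in $\VV(\XX)$.

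For part (1), I would transfer the formula $\forall n\in\N\ (\s\res\N)(n)=n$, which holds in $\VV(\XX)$ because $\s(n)={}^*n=n$ for every $n\in\N\subseteq\R$. Transfer yields $\forall\nu\in\hN\ {}^*(\s\res\N)(\nu)=\nu$, and by the definition of the internal star map (taking $X=\N$) the left-hand side is exactly ${}^\circledast\nu$. This is the direct analogue of Proposition~\ref{circledastidentityonhR}.

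For part (2), an element $\nu\in\hN\setminus\N$ is a positive infinite hyperreal, so the claim ${}^*\nu>\nu$ is the special case $\xi=\zeta=\nu$ of Proposition~\ref{circledastreal}(1). Equivalently, one may transfer the bounded formula $\forall n\in\N\ \nu>n$ (with parameter $\nu\in\VV(\XX)$) to obtain $\forall n\in\hN\ {}^*\nu>n$, and then specialize $n$ to $\nu\in\hN$.

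For part (3), the plan is to lift the statement ``$\N$ is an initial segment of $\hN$'' by one level via transfer. I would first verify that the bounded formula
\[
\psi\ :=\ \forall n\in\N\ \forall m\in\hN\ (m\leq n\to m\in\N)
\]
holds in $\VV(\XX)$: given $n\in\N$, the standard finite set $\{0,1,\ldots,n\}$ coincides with its own hyper-extension, so any $m\in\hN$ with $m\leq n$ must lie in $\{0,1,\ldots,n\}\subseteq\N$. Transfer of $\psi$ (treating $\N$ and $\hN$ as parameters in $\VV(\XX)$, and using ${}^*\N=\hN$ and ${}^*\hN={}^{**}\N$) yields
\[
\forall n\in\hN\ \forall m\in{}^{**}\N\ (m\leq n\to m\in\hN),
\]
which is precisely the contrapositive of the desired claim. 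The point that I expect to require the most care is checking that $\psi$ really holds in $\VV(\XX)$ and is a legitimate bounded formula with parameters in $\VV(\XX)$; once that is in place, the transfer step is entirely mechanical.
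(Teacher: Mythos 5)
Your proposal is correct and follows essentially the same route as the paper: (1) and (2) are the special cases of Propositions \ref{circledastidentityonhR} and \ref{circledastreal}(1) (which you also re-derive by the same transfer arguments used there), and (3) is a single application of \emph{transfer}, with $\N$ and $\hN$ as parameters, to the fact that $\N$ is an initial segment of $\hN$ --- the paper transfers the positive formulation ``$\forall x\in\N\ \forall y\in\hN\setminus\N\ (x<y)$'' while you transfer its contrapositive, which is an immaterial difference.
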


\begin{proof}
$(1)$ is a particular case of Proposition \ref{circledastidentityonhR};
and $(2)$ is a particular case of Proposition \ref{circledastreal} (1).
Finally, $(3)$ is obtained by \emph{transfer} from the property:
``$\forall x\in\N\ \forall y\in\hN\setminus\N\ (x<y)$".
\end{proof}

\smallskip
Although
${}^\circledast\xi=\xi={}^*\xi$ for every $\xi\in\R$,
we remark that the internal star map $\circledast$ and the star map $*$
are in general different when applied to internal subsets
$B\subseteq\hR$.
Indeed, recall that by Propositions \ref{double} and \ref{star=dast},
one has that ${}^\circledast B={}^*B$ if and only if $B=\hA$ for some $A\subseteq\R$.

\begin{proposition}
If $A\subseteq\hR$ is an internal subset then $A\subseteq{}^\circledast A$.
\end{proposition}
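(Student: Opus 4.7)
The plan is to obtain the inclusion by a single application of the Internal Transfer Principle (Theorem \ref{dualtransfer}) to the atomic membership formula, combined with the fact that $\circledast$ is the identity on $\hR$ (Proposition \ref{circledastidentityonhR}).

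Let me spell out the steps. Fix an arbitrary element $a\in A$. Since $A\subseteq\hR$, we have $a\in\hR$, so both $a$ and $A$ are internal elements of our universe. The formula $\sigma(x,y):=(x\in y)$ is a bounded quantifier formula (in fact atomic, with no quantifiers at all), so Theorem \ref{dualtransfer} applies with parameters $\xi_1=a$ and $\xi_2=A$. The hypothesis $\sigma(a,A)$ holds trivially because $a\in A$, so internal transfer yields
\[
{}^\circledast a \in {}^\circledast A.
\]
Now $a\in\hR$, and Proposition \ref{circledastidentityonhR} tells us that ${}^\circledast a=a$. Substituting gives $a\in{}^\circledast A$, which is exactly what we wanted. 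Since $a$ was arbitrary, $A\subseteq{}^\circledast A$.

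There is no real obstacle here: the statement essentially packages two previously established facts. The only point that requires a moment's attention is the legitimacy of applying Theorem \ref{dualtransfer}, which needs both parameters to be internal; this is immediate because $a$ lies in the internal set $A\subseteq\hR\subseteq{}^*\VV(\XX)$ and $A$ itself is internal by hypothesis. One should also note why the analogous claim $A\subseteq{}^*A$ would not be the right formulation: for a general internal $A$, the external star extension ${}^*A$ need not contain $A$ (it equals $A$ only when $A$ is a hyper-extension, cf.\ Propositions \ref{double} and \ref{star=dast}), whereas ${}^\circledast A$ does, precisely because $\circledast$ fixes hyperreals pointwise.
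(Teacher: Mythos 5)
Your argument is correct, but it follows a different route from the paper's. The paper proves the inclusion in one stroke by applying ordinary \emph{transfer} to the standard statement ``$\forall A\in\PP(\R)\ (\s\res\PP(\R))(A)=\hA\supseteq A$'' (a consequence of ${}^*r=r$ for $r\in\R$), which by the very definition ${}^\circledast A={}^*(\s\res\PP(\R))(A)$ immediately yields ${}^\circledast A\supseteq A$ for every internal $A\in{}^*\PP(\R)$. You instead work pointwise: you invoke the Internal Transfer Principle (Theorem \ref{dualtransfer}) on the atomic formula $x\in y$ with the internal parameters $a$ and $A$ (legitimate, since $a\in\hR$ and $A\in{}^*\PP(\R)$ are both internal), getting ${}^\circledast a\in{}^\circledast A$, and then use Proposition \ref{circledastidentityonhR} to replace ${}^\circledast a$ by $a$. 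What your route buys is a cleaner modular structure --- the proposition appears as a two-line corollary of results already established, with no explicit manipulation of the function $\s\res\PP(\R)$ --- and it generalizes verbatim to internal subsets of any set on which $\circledast$ acts as the identity (e.g.\ internal subsets of $\hN$). What the paper's route buys is self-containedness: a single application of raw transfer that also exhibits the mechanism behind $\circledast$ on sets. One small caveat: your closing parenthetical ``${}^*A$ \ldots equals $A$ only when $A$ is a hyper-extension'' is misstated; what Propositions \ref{double} and \ref{star=dast} give is that ${}^*A={}^\circledast A$ (not ${}^*A=A$) exactly when $A$ is a hyper-extension of a subset of $\R$, and the correct point for your contrast is that $A\subseteq{}^*A$ can fail outright, as the paper's example ${}^*[\xi,2\xi]\cap[\xi,2\xi]=\emptyset$ for infinite $\xi$ shows. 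This slip is in a side remark and does not affect the proof itself.
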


\begin{proof}
Apply \emph{transfer} to the property:
``$\forall\,A\in\PP(\R)\  \hA=(\s\res{\PP(\R)})(A)\supseteq A$", and obtain
``$\forall\,A\in{}^*\PP(\R)\  {}^\circledast A={}^*(\s\res{\PP(\R)})(A)\supseteq A$".
\end{proof}

Recall that we are assuming ${}^*r=r$ for every $r\in\R$, and hence the
inclusion $A\subseteq\hA$ holds for every $A\subseteq\R$.
However, for subsets $A\subseteq\hR$ of the hyperreal numbers,
we remark the same property does not hold; \emph{e.g.},
if $A=[\xi,2\xi]\subset\hR$ where $\xi>0$ is infinite (or infinitesimal),
then $A$ is an interval set and
$A\cap\hA=\emptyset$ because every element of $\hA=[{}^*\xi,2\,{}^*\xi]$
is larger (smaller, respectively) than any element of $A$.
(See the Example below.)

Note that, by \emph{transfer},
one has the inclusion $\{{}^*a\mid a\in A\}\subseteq\hA$ for every $A\subseteq\hR$.

\begin{example}
{\rm Let $\xi<\zeta$ be positive infinite hyperreal numbers in $\hR$,
and consider the star-extension of the interval $[\xi,\zeta]$:
$${}^*[\xi,\zeta]=\{x\in{}^{**}\R\mid {}^*\xi<x<{}^*\zeta\}$$
and the internal star-extension of the same interval $[\xi,\zeta]$:
$${}^\circledast[\xi,\zeta]=\{x\in{}^{\circledast *}\R\mid
{}^\circledast\xi<x<{}^\circledast\zeta\}=
\{x\in{}^{**}\R\mid \xi<x<\zeta\}.$$
Then ${}^*[\xi,\zeta]$ and ${}^\circledast[\xi,\zeta]$
are disjoint, since every element of the former is greater than any element of the latter.
Similarly, if $\varepsilon<\eta$ are positive infinitesimal hyperreal numbers
in $\hR$, then every element of the interval
${}^*[\varepsilon,\eta]=\{x\in{}^{**}\R\mid {}^*\varepsilon<x<{}^*\eta\}$
is smaller than any element of
${}^\circledast[\varepsilon,\eta]=\{x\in{}^{**}\R\mid\varepsilon<x<\eta\}$.}
\end{example}

\subsection{The internal star map in the ultrapower model}

\

\noindent
%
%
%
Let $\U$ be a non-principal ultrafilter on a set $I$,
and let $*$ be the star-map on $\VV(\XX)$
as induced by the diagonal embedding
$$d:\VV(\XX)\to\VV(\XX)_b^I/\U\cong{}^*\VV(\XX)$$
of the universe into its \emph{bounded} ultrapower
$\VV(\XX)_b^I/\U:=\bigcup_{n\ge 0}V_n(\XX)^I/\U$,
according to the construction seen in the proof of Theorem \ref{ultrapowermodel}.
Then the star-map $*$ on ${}^*\VV(\XX)$
can be seen as induced by the diagonal embedding into the double ultrapower
$$\dd:\VV(\XX)_b^I/\U\to(\VV(\XX)_b^I/\U)_b^I/\U\cong{}^{**}\VV(\XX).$$
Precisely, given any bounded function $f:I\to\VV(\XX)$,
the diagonal embedding $\dd$ maps the $\U$-equivalence class
$[f]_\U\in\VV(\XX)_b^I/\U$ to the $\U$-equivalence class
of the constant sequence with value $[f]_\U$, that is:
$$\dd:[f]_\U\longmapsto[([f]_\U\mid i\in I)]_\U\in(\VV(\XX)_b^I/\U)_b^I/\U.$$
The double nonstandard embedding $**:\VV(\XX)\to{}^{**}\VV(\XX)$ can be seen as induced
by the composition $\dd\circ d:\VV(\XX)\to(\VV(\XX)_b^I/\U)_b^I/\U$.

Besides $\dd$, there is another canonical embedding
of the ultrapower \newline $\VV(\XX)_b^I/\U$
into the double ultrapower $(\VV(\XX)_b^I/\U)_b^I/\U$
that corresponds to the internal star-map $\circledast$,
namely the function
$$\widehat{d}:\VV(\XX)_b^I/\U\to(\VV(\XX)_b^I/\U)_b^I/\U\cong{}^{**}\VV(\XX)$$
obtained by ``lifting" the diagonal embedding $d$.
Precisely, for every bounded function $f:I\to \VV(\XX)$,
the embedding $\widehat{d}$ maps the $\U$-equivalence class $[f]_\U\in\VV(\XX)_b^I/\U$
to the $\U$-equivalence class of the sequence of the diagonal embeddings in $\VV(\XX)_b^I/\U$
of the elements $f(i)$, that is:
$$\widehat{d}:[f]_\U\longmapsto [(d(f(i))\mid i\in I)]_\U=[d\circ f]_\U\in(\VV(\XX)_b^I/\U)_b^I/\U $$
It can be directly verified from the definitions that the following
diagram gives a pullback for the embeddings
$\dd$ and $\widehat{d}$ in the category of sets:\footnote
{~Actually, it is a pullback in the category of models of the
language of set theory, with bounded elementary embeddings as arrows.}
$$\begin{tikzcd}
\VV(\XX) \arrow{r}{d}
\arrow{d}{d} &
\VV(\XX)_b^I/\U \arrow{d}{\widehat{d}} \\
\VV(\XX)_b^I/\U \arrow{r}{\dd} & (\VV(\XX)_b^I/\U)_b^I/\U
\end{tikzcd}$$
Clearly, the above diagram correspond to the pullback:
$$\begin{tikzcd}
\VV(\XX) \arrow{r}{*}
\arrow{d}{*} &
{}^*\VV(\XX) \arrow{d}{\circledast} \\
{}^*\VV(\XX) \arrow{r}{*} & {}^{**}\VV(\XX)
\end{tikzcd}$$

\begin{remark}
Assume the nonstandard model has been constructed by
using a bounded ultrapower modulo the ultrafilter $\U$ on the set $I$,
and assume that $\U,I\in\VV(\XX)$.
For every $n$, the internal star map $\circledast$ restricted to ${}^*V_n(\XX)$
can be seen as induced by the diagonal embedding from
${}^*V_n(\XX)$ into its internal ultrapower
modulo the internal ultrafilter $^*\U$ on the internal set ${}^*I$.
\end{remark}

\begin{remark}
Up to isomorphisms, the composition $**=\dd\circ d=\widehat{d}\circ d=\circledast *$ can be
seen as the diagonal embedding
$$d':\VV(\XX)\to\VV(\XX)_b^{I\times I}/\U\otimes\U.$$
Indeed, there is a canonical isomorphism
between $\VV(\XX)_b^{I\times I}/\U\otimes\U$ and the double
ultrapower $(\VV(\XX)_b^I/\U)_b^I/\U\cong{}^{**}\VV(\XX)$.\footnote
{~See \cite{ck, jin}. Recall that
$\U\otimes\U=\{A\subseteq I\times I\mid \{i\in I\mid \{j\in I\mid (i,j)\in A\}\in\U\}\in\U\}$
is the \emph{tensor product} of $\U$ with itself.}
\end{remark}

\section{Internal star maps}\label{sec-internalstarmaps}

In this section we generalize the definition of internal star map $\circledast$.
To simplify matters, in the following:

\begin{itemize}
\item
We will simply write $\VV$ instead of $\VV(\XX)$,
\item
We will write $\s$ for the star map $*$.
\item
We will write $\s^n$ to denote the $n$-th iteration of the star map;
so, $\s(A)=\hA$, $\s^2(A)={}^{**}A$, and so forth.
\end{itemize}

\begin{definition}
For every $n\in\N$, the $n$-th \emph{internal universe} $\VV_n$ is the union
$$\VV_n:=\bigcup_{X\in\VV}\s^n(X).$$
\end{definition}

Recall that the universe $\VV=\bigcup_{k\ge 0}V_k(\XX)$ was
defined as the union of the finite ``levels" $V_k(\XX)$.
Note that the subscript $n$ in $\VV_n$
does not mean a ``level'' of one universe $\VV$. Instead, it means
that $\VV_n$ is the $n$-th universe in the sequence of universes
$\VV,\VV_1,\VV_2,\ldots$.

\begin{proposition}
$\s^n(\N),\s^n(\R)\in\VV_n$
but $\s^n(\N),\s^n(\R)\notin\VV_{n+1}$.
\end{proposition}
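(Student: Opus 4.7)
The plan is to treat the two halves separately. For the positive inclusion $\s^n(\N)\in\VV_n$, the witness is $X=\{\N\}\in\VV$: $n$ iterations of \emph{transfer} applied to ``$\{\N\}$ is the unique set whose only element is $\N$'' yield $\s^n(\{\N\})=\{\s^n(\N)\}$, whence $\s^n(\N)\in\s^n(\{\N\})\subseteq\VV_n$. The same bookkeeping with $X=\{\R\}$ gives $\s^n(\R)\in\VV_n$.

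For the negative half $\s^n(\N)\notin\VV_{n+1}$ I would argue by contradiction, reproducing at level $n+1$ the classical proof that $\N$ is external. Iterating part $(3)$ of the preceding proposition, together with nontriviality at every level (obtained by iterated \emph{transfer} from $\hN\ne\N$), yields that $\s^n(\N)$ is a \emph{proper} initial segment of $\s^{n+1}(\N)$. Combined with iterated \emph{transfer} of ``$0\in\N$'' and ``$\N$ has no maximum'', this shows that $\s^n(\N)$ is a nonempty subset of $\s^{n+1}(\N)$, bounded above by any element of $\s^{n+1}(\N)\setminus\s^n(\N)$, with no maximum. Now if $\s^n(\N)\in\VV_{n+1}$, the level-$(n+1)$ analogue of the internal-subset characterization places $\s^n(\N)$ in $\s^{n+1}(\PP(\N))$, and $n+1$ iterations of \emph{transfer} of ``every nonempty bounded-above element of $\PP(\N)$ has a maximum'' force $\s^n(\N)$ to possess a maximum, the desired contradiction.

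For $\s^n(\R)\notin\VV_{n+1}$ the skeleton is identical, with the least-upper-bound property standing in for well-ordering. Iterating the existence of a positive infinite hyperreal produces $\xi\in\s^{n+1}(\R)$ with $\xi>r$ for every $r\in\s^n(\R)$, so $\s^n(\R)$ is a bounded nonempty subset of $\s^{n+1}(\R)$. Assuming $\s^n(\R)\in\VV_{n+1}$, the internal-subset characterization places it in $\s^{n+1}(\PP(\R))$, and iterated \emph{transfer} of the least-upper-bound axiom produces a supremum $s\in\s^{n+1}(\R)$; since $\s^n(\R)$ is closed under $r\mapsto r+1$ by iterated \emph{transfer}, $s-1$ is also an upper bound of $\s^n(\R)$, contradicting the minimality of $s$.

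The step I expect to be the main obstacle is the level-$(n+1)$ analogue of the internal-subset characterization: from mere membership $\s^n(\N)\in\s^{n+1}(Y)$ for some $Y\in\VV$ one needs to conclude $\s^n(\N)\in\s^{n+1}(\PP(\N))$. This should follow by the standard device of replacing $Y$ by $Z:=\{y\in Y\mid y\subseteq\N\}\in\VV$ and applying iterated \emph{transfer} to the equivalence $y\in Y\wedge y\subseteq\N\Leftrightarrow y\in Z$ to obtain $\s^n(\N)\in\s^{n+1}(Z)\subseteq\s^{n+1}(\PP(\N))$.
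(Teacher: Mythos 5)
Your proposal is correct, but its second half takes a genuinely different route from the paper's. For the positive part both arguments are the same one-line transfer of a membership $\N\in X$ for some witness $X\in\VV$ (the paper implicitly uses a level $V_k(\XX)$, you use $\{\N\}$; either works). For the non-membership, the paper's key observation is that externality of $\N$ and $\R$ at the base level, \emph{i.e.} ``$\N,\R\notin\hX$ for every $X\in\VV$'', is itself a bounded statement about the parameters $\N$ (resp.\ $\R$) and $\hX$, both of which lie in $\VV$; applying $\s^n$ to it immediately gives $\s^n(\N)\notin\s^n(\hX)=\s^{n+1}(X)$ for every $X$, hence $\s^n(\N)\notin\VV_{n+1}$, with no further work. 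You instead re-derive externality one level up: you show that $\s^n(\N)$ is a proper initial segment of $\s^{n+1}(\N)$ without a maximum, place it into $\s^{n+1}(\PP(\N))$ via a level-$(n+1)$ internal-subset characterization, and contradict the transferred maximum principle (and analogously, the least-upper-bound property for $\R$). This is self-contained — it uses only the non-triviality $\hN\ne\N$ and transferred order facts rather than quoting the ``well known'' externality of $\N$ and $\R$ — but it is considerably longer, and it obliges you to establish the internal-subset characterization at level $n+1$; there your set $Z=\{y\in Y\mid y\subseteq\N\}$ should be trimmed to exclude atoms of $\XX$ (for which $y\subseteq\N$ holds vacuously), the same harmless wrinkle that occurs in the standard proof of that characterization, so it is not a genuine gap. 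In short: the paper's route buys brevity by transferring a known negative membership statement as it stands; your route buys independence from that known fact at the cost of repeating the classical externality arguments at the higher level.
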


\begin{proof}
It is well known in nonstandard analysis that $\N$ and $\R$
are external objects, \emph{i.e.}
$\N,\R\notin\hX$ for any $X\in\VV$.
Then, by applying \emph{transfer} to the iterated star map $\s^n$,
we obtain that $\s^n(\N),\s^n(\R)\notin\s^{n+1}(X)$ for any $X$,
and hence $\s^n(\N),\s^n(\R)\notin\VV_{n+1}$.
On the other hand $\N,\R\in\VV$ and so $\s^n(\N),\s^n(\R)\in\VV_n$.
\end{proof}

\begin{proposition}\label{structure1}
Let $n\in\N$. Then:

\begin{enumerate}
\item
$\s^n(\XX)=\XX$ for every $n$.
\item
$\VV_n=\bigcup_{k\ge 0}\s^n(V_k(\XX))$.
\item
$\VV_{n+1}\subset\VV_n$, and hence the internal universes form a (strictly) decreasing chain:
$\VV\supset\VV_1\supset\VV_2\supset\ \ldots\ \supset\VV_n\supset\VV_{n+1}\supset\ \ldots$.
\end{enumerate}
\end{proposition}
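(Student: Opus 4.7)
The plan is to dispatch the three parts in order, with (1) a straightforward induction, (2) an application of transfer combined with the transitivity of the superstructure, and (3) a combination of (2) with the preceding proposition to witness strictness.

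For part (1), I would argue by induction on $n$. The base case $\s(\XX)=\XX$ is part of the construction of the star map in Theorem \ref{ultrapowermodel} (where the bijection $\tau_0$ was used precisely to ensure $\s\XX=\XX$). For the inductive step, $\s^{n+1}(\XX)=\s(\s^n(\XX))=\s(\XX)=\XX$.

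For part (2), the inclusion $\supseteq$ is immediate since each $V_k(\XX)$ lies in $\VV$, so $\s^n(V_k(\XX))$ is one of the sets whose union defines $\VV_n$. For the inclusion $\subseteq$, suppose $y\in\s^n(X)$ for some $X\in\VV$. Pick $k$ with $X\in V_k(\XX)$. Since $\s^n(X)$ has at least the element $y$, it is not an atom; hence by \emph{transfer} applied to ``$X$ is not an atom'', $X$ itself is not an atom, and by the transitivity of the superstructure we have $X\subseteq V_{k-1}(\XX)$. Applying \emph{transfer} to the bounded quantifier formula ``$\forall z\in X\ (z\in V_{k-1}(\XX))$'' yields $\s^n(X)\subseteq\s^n(V_{k-1}(\XX))$, so $y\in\s^n(V_{k-1}(\XX))$ as required.

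For part (3), the inclusion $\VV_{n+1}\subseteq\VV_n$ follows from a simple shift: if $y\in\s^{n+1}(X)=\s^n(\s X)$ for some $X\in\VV$, then $\s X\in\VV$ as well (since $\s$ takes $\VV$ into itself in our framework), so $y$ witnesses membership in $\VV_n$ via the set $\s X$. Strictness is then handed to us by the immediately preceding proposition: $\s^n(\N)$ (equivalently $\s^n(\R)$) lies in $\VV_n\setminus\VV_{n+1}$, so the chain $\VV\supset\VV_1\supset\VV_2\supset\cdots$ is strictly decreasing.

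The only subtle step is the ``$X$ is not an atom'' reduction in part (2); everything else is bookkeeping with transfer. Once one notices that a witness $y\in\s^n(X)$ forces $\s^n(X)$ (and hence $X$) to be a set rather than an atom, the transitivity of $\VV$ hands over the containment $X\subseteq V_{k-1}(\XX)$ needed for transfer to fire.
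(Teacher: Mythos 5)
Your proposal is correct and follows essentially the same route as the paper: part (1) from $\s(\XX)=\XX$, part (2) by bounding $X$ inside a level $V_k(\XX)$ and transferring the inclusion (your detour through ``$X$ is not an atom'' plus transitivity is just a slightly fussier way of getting $X\subseteq V_{k-1}(\XX)$), and part (3)'s strictness from the preceding proposition. The only cosmetic difference is in the inclusion $\VV_{n+1}\subseteq\VV_n$, where you use $\s^{n+1}(X)=\s^n(\s X)$ with $\s X\in\VV$ directly, while the paper transfers $\hX\subseteq V_k(\XX)$; both hinge on the same key feature that the star map sends $\VV$ into itself.
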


\begin{proof}
(1).\ This is a trivial fact based on the assumption that $\s(\XX)=\XX$.

\smallskip
(2).\ One inclusion is trivial because every level
$V_k(\XX)\in\VV$.
For the converse inclusion $\bigcup_{X\in\VV}\s^n(X)\subseteq\bigcup_{k\ge 0}\s^n(V_k(\XX))$,
observe that for every $X\in\VV$ we can pick $k$ such that $X\subseteq V_k(\XX)$,
and hence $\s^n(X)\subseteq\s^n(V_k(\XX))$.


\smallskip
(3).\ For every $X\in\VV$, we have that $\hX\in\VV$, and hence
$\hX\subseteq V_k(\XX)$ for some $k$. Then
by \emph{transfer} with respect to $\s^n$, we
obtain that $\s^{n+1}(X)\subseteq\s^n(V_k(\XX))\subseteq\VV_n$.
This shows that $\VV_{n+1}\subseteq\VV_n$.
The inclusion is proper because, \emph{e.g.},
$\s^n(\N)\in\VV_n$ but $\s^n(\N)\notin\VV_{n+1}$, as shown in the previous proposition.
\end{proof}

\begin{definition}
For every $n\in\N$, the $n$-th \emph{internal star map} is the
function $\i_n:\VV_n\to\VV_{n+1}$ defined by putting:
$$\i_n:=\s^n(\s)=\bigcup_{X\in\VV}\s^n(\s\res X).$$
Elements of $\VV_n$ will be called \emph{$n$-th internal} elements.
 \end{definition}

\begin{remark}\label{intuitioninternalstarmap}
Intuitively, one can think of $\i_n$ as the star map when $\VV_n$ is
taken as the ``standard'' universe; in other words, $\i_n$ is the star map as viewed
by someone who lives within the $n$-th internal universe $\VV_n$.
\end{remark}

Let us check that the above definition is well-posed and that $\i_n$ takes values in $\VV_{n+1}$.
For every $X\in\VV$, the $n$-th internal star-map extends
the following function:
$$\s^n(\s\res X):\s^n(X)\to\s^{n+1}(X).$$
Since for all $X,Y\in\VV$,
one has $\s\res(X\cap Y)=(\s\res X)\cap(\s\res Y)$,
it follows that $\s^n(\s\res(X\cap Y))=\s^n(\s\res X)\cap\s^n(\s\res Y)$.
Besides, it is clear that $\text{range}(\i_n)\subseteq\bigcup_{X\in\VV}\s^{n+1}(X)=\VV_{n+1}$.

\smallskip
Note that the \emph{first} internal universe
$\VV_1=\bigcup_{X\in\VV}\hX$ and the \emph{first}
internal map $\i_1$ are the internal universe ${}^*\VV(\XX)$
and the internal star map $\circledast$
considered in \S\ref{sec-bistarmap}, respectively.

\begin{proposition}\label{rangeinternalmaps}
For every $\ell\in\N$, one has that
$\i_n[\VV_{n+\ell}]\subseteq\VV_{n+\ell+1}$.
\end{proposition}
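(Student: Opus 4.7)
The plan is to unwind the definition of $\i_n$ at a point $\xi\in\VV_{n+\ell}$ using a cleverly chosen ``base set'' from $\VV$. First, I take an arbitrary $\xi\in\VV_{n+\ell}$, so by definition there exists some $X\in\VV$ with $\xi\in\s^{n+\ell}(X)$. Since $\VV_{n+\ell}\subseteq\VV_n$ by Proposition \ref{structure1}(3), the element $\i_n(\xi)$ is defined, and the target is to produce a set $Y'\in\VV$ such that $\i_n(\xi)\in\s^{n+\ell+1}(Y')$.

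The key observation is that the star map sends $\VV$ into $\VV$, so iterating gives $\s^\ell(X)\in\VV$ for every $X\in\VV$. Setting $Y:=\s^\ell(X)\in\VV$, we have the identification
\[
\s^{n+\ell}(X)\;=\;\s^n(\s^\ell(X))\;=\;\s^n(Y),
\]
so $\xi\in\s^n(Y)$. This is the move that makes the definition of $\i_n$ directly applicable: whereas $\s^{n+\ell}(X)$ is presented as $\s^n(\,\cdot\,)$ of something not \emph{a priori} in $\VV$, the fact that $\s$ is an endo-map of $\VV$ lets us repackage the same set as $\s^n(Y)$ with $Y\in\VV$.

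Now, by the formula $\i_n=\bigcup_{Z\in\VV}\s^n(\s\res Z)$ and the well-definedness discussion immediately preceding the proposition, we have $\i_n(\xi)=\s^n(\s\res Y)(\xi)$. Applying \emph{transfer} ($\ell$-fold, then $n$-fold, or simply $(n+\ell)$-fold) to the bounded-quantifier statement ``$\s\res Y$ is a function from $Y$ to $\s(Y)$'' shows that $\s^n(\s\res Y)$ is a function $\s^n(Y)\to\s^{n+1}(Y)$, so
\[
\i_n(\xi)\in\s^{n+1}(Y)=\s^{n+1}(\s^\ell(X))=\s^{n+\ell+1}(X)\subseteq\VV_{n+\ell+1},
\]
which is exactly what was required. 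Note that the case $\ell=0$ is just the defining range statement $\i_n[\VV_n]\subseteq\VV_{n+1}$, so the argument gives a uniform treatment for all $\ell\in\N$.

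There is no real obstacle here: the only thing one must be careful about is not to pass through $\s^\ell(X)$ as if it were an arbitrary external set; the whole point is precisely that, because $\s:\VV\to\VV$, the iterate $\s^\ell(X)$ \emph{is} a member of $\VV$ and therefore a legitimate index in the union defining $\i_n$. Once this is recognized, the rest is a one-line transfer application.
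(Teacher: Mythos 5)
Your proposal is correct and follows essentially the same route as the paper: pick $X\in\VV$ with $\xi\in\s^{n+\ell}(X)$, set $Y=\s^{\ell}(X)\in\VV$ so that $\xi\in\s^{n}(Y)$, and conclude $\i_n(\xi)=\s^{n}(\s\res Y)(\xi)\in\s^{n+\ell+1}(X)\subseteq\VV_{n+\ell+1}$. The paper phrases the last step as reading off the codomain $\s^{n}(\s^{\ell+1}(X))$ of $\s^{n}(\s\res Y)$ directly, while you spell out the transfer of ``$\s\res Y$ is a function from $Y$ into $\s(Y)$''; these are the same argument.
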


\begin{proof}
Let $a\in\VV_{n+\ell}$. Pick $X\in\VV$ such that $a\in\s^{n+\ell}(X)$,
and let $Y=\s^\ell(X)$.
Observe that the restriction of the star map
$\s\res Y:\s^\ell(X)\to\s^{\ell+1}(X)$, and so
$\i_n(a)=\s^n(\s\res Y)(a)\in\s^n(\s^{\ell+1}(X))=\s^{n+\ell+1}(X)\subseteq\VV_{n+\ell+1}$.
\end{proof}

\smallskip
Similarly to the internal star map $\circledast$, every $n$-th internal star map $\i_n$ satisfies
the transfer principle.

\begin{theorem}[$n$-th Internal Transfer Principle]\label{n-dualtransfer}
Let $\sigma(x_1,\ldots,x_k)$ be a bounded quantifier formula
and let $\xi_1,\ldots,\xi_k\in\VV_n$ be $n$-th internal elements. Then
$$\sigma(\xi_1,\ldots,\xi_k)\ \Longleftrightarrow\
\sigma(\i_n(\xi_1),\ldots,\i_n(\xi_k)).$$
\end{theorem}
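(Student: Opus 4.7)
The approach is to mimic the proof of Theorem~\ref{dualtransfer} (the case $n=1$) almost verbatim, substituting the iterated star map $\s^n$ for a single application of $\s$. First, pick $X \in \VV$ large enough that every $\xi_j$ lies in $\s^n(X)$; since each $\xi_j$ belongs to some $\s^n(X_j)$, the choice $X := X_1 \cup \cdots \cup X_k$ works, because $\s^n$ preserves finite unions by transfer. Note that the restriction $\s \res X$ is a set of pairs contained in $X \times \hX \in \VV$, hence $\s \res X \in \VV$, and by the very definition of the $n$-th internal star map one has $\i_n(\xi_j) = \s^n(\s \res X)(\xi_j)$ for each $j$.

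Next, form the bounded-quantifier definable set
$$\Gamma := \{(x_1,\ldots,x_k) \in X^k \mid \sigma((\s \res X)(x_1), \ldots, (\s \res X)(x_k))\} \in \VV,$$
which uses $\s \res X$ as a parameter. The $n$-fold composition $\s^n : \VV \to \VV_n$ is itself a bounded elementary embedding (obtained by iterating transfer for $\s$), so applying transfer with respect to $\s^n$ to the membership relation for $\Gamma$ yields
$$(\xi_1,\ldots,\xi_k) \in \s^n(\Gamma) \ \Longleftrightarrow\ \sigma(\s^n(\s \res X)(\xi_1), \ldots, \s^n(\s \res X)(\xi_k)),$$
whose right-hand side is precisely $\sigma(\i_n(\xi_1),\ldots,\i_n(\xi_k))$.

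On the other hand, the ordinary transfer principle for the single star map $\s$ shows that for $(x_1,\ldots,x_k) \in X^k$ the condition $\sigma({}^*x_1,\ldots,{}^*x_k)$ is equivalent to $\sigma(x_1,\ldots,x_k)$. Hence $\Gamma$ coincides with $\Delta := \{(x_1,\ldots,x_k) \in X^k \mid \sigma(x_1,\ldots,x_k)\}$, so $\s^n(\Gamma) = \s^n(\Delta)$. A final application of transfer via $\s^n$ to the set $\Delta$ gives $(\xi_1,\ldots,\xi_k) \in \s^n(\Delta) \iff \sigma(\xi_1,\ldots,\xi_k)$, and chaining the three equivalences concludes the proof.

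The only point that requires mild care is the assertion that bounded quantifier formulas transfer under the iterated map $\s^n$. This is the expected fact that a composition of bounded elementary embeddings is again a bounded elementary embedding, established by a trivial induction on $n$ from the transfer principle for $\s$ itself; it is already used implicitly elsewhere in the paper (for instance in the proof of Proposition~\ref{structure1}). Thus no genuinely new ingredient beyond the single-step case is required, and the argument is a direct generalization of Theorem~\ref{dualtransfer}.
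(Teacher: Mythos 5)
Your proof is correct and follows essentially the same route as the paper's: pick $X$ with all $\xi_j\in\s^n(X)$, form the set $\Gamma$ of tuples satisfying $\sigma$ of their star images, identify it with the set of tuples satisfying $\sigma$ via ordinary transfer, and apply transfer along $\s^n$ to conclude. Your extra remarks (choosing $X$ as a finite union, and that $\s^n$ is a bounded elementary embedding as a composition of such) only make explicit what the paper leaves implicit.
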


\begin{proof}
The proof is just a straight modification of that of Theorem \ref{dualtransfer},
where one considers the $n$-iterated star-extensions $\s^n(\s\res X)$ in place of
the (single) star-extensions ${}^*(\s\res X)=\s\,(\s\res X)$.

Precisely, pick $X$ such that $\xi_1,\ldots,\xi_k\in\s^n(X)$.
By the definition of $n$-th internal star map,
$\sigma(\i_n(\xi_1),\ldots,\i_n(\xi_k))$ is
$\sigma(\s^n(\s\res X)(\xi_1),\ldots,\s^n(\s\res X)(\xi_k))$, and so
$$\sigma(\i_n(\xi_1),\ldots,\i_n(\xi_k))\ \Longleftrightarrow\
(\xi_1,\ldots,\xi_k)\in\s^n(\Gamma)$$
where $\Gamma:=\{(x_1,\ldots,x_k)\in X^k\mid
\sigma((\s\res X)(x_1),\ldots,(\s\res X)(x_k))\}$, that is
$$\Gamma=\{(x_1,\ldots,x_k)\in X^k\mid \sigma({}^*x_1,\ldots,{}^*x_k)\}.$$
Note that for all $x_1,\ldots,x_k\in X$, by \emph{transfer}, $\sigma({}^*x_1,\ldots,{}^*x_k)$ is equivalent to
$\sigma(x_1,\ldots,x_k)$, and so
$\Gamma=\{(x_1,\ldots,x_k)\in X^n\mid\sigma(x_1,\ldots,x_k)\}$. We finally conclude that
$\s^n(\Gamma)=\{(\xi_1,\ldots,\xi_k)\in\s^n(X)\mid\sigma(\xi_1,\ldots,\xi_k)\}$.
\end{proof}

\subsection{Compositions of internal star maps}
\

\noindent
In this subsection, we will consider compositions of
$n$-th internal star maps with the (external) star map,
and prove a few equalities about them.

Similarly as done with the star map $\s$, we adopt the following notation.

\begin{itemize}
\item
We will write $(\i_n)^m$ to denote the $m$-th iteration of the $n$-th internal star map;
so, $(\i_n)^1(A)=\i_n(A)$, and inductively $(\i_n)^{m+1}(A)=\i_n((\i_n)^m(A))$.
\end{itemize}

We will use the following properties, that
are straight consequences of the \emph{transfer principle}.

\begin{proposition}\label{transferaboutfunctions}
Let $n\in \N$.

\begin{enumerate}
\item
For every function $f$ and for every $a\in\text{dom}(f)$,
one has $\s^n(f(a))=\s^n(f)(\s^n(a))$ and $\i^n(f(a))=\i^n(f)(\i^n(a))$.
\item
For all functions $f$ and $g$ such that the composition $f\circ g$ is defined,
one has $\s^n(f\circ g)=\s^n(f)\circ\s^n(g)$ and $\i^n(f\circ g)=\i^n(f)\circ\i^n(g)$.
\end{enumerate}
\end{proposition}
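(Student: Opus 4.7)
The plan is to reduce both statements to applications of bounded quantifier transfer: once with the ordinary transfer principle for the external map (noting that $\s^n$, being an $n$-fold composition of bounded elementary embeddings, is itself a bounded elementary embedding), and once with the $n$-th Internal Transfer Principle (Theorem \ref{n-dualtransfer}) for the internal map. Once function application and composition have been rephrased as bounded quantifier formulas, both equalities follow automatically.

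For part (1), I would identify every function with its graph and observe that ``$f(a)=b$'' is captured by the bounded quantifier formula
$$\varphi(f,a,b)\;\equiv\;(a,b)\in f\ \wedge\ \forall p\in f\ \forall z\ \bigl(p=(a,z)\to z=b\bigr).$$
Setting $b:=f(a)$ and applying transfer $n$ times gives $\s^n(f)(\s^n(a))=\s^n(f(a))$. For the internal counterpart the same formula $\varphi$ is bounded, and for $f,a\in\VV_n$ with $a\in\text{dom}(f)$ the element $b:=f(a)$ automatically lies in $\VV_n$ by a transitivity-like property of the $n$-th internal universe (which is itself a bounded quantifier transfer statement starting from the transitivity of $\VV$). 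Theorem \ref{n-dualtransfer} then yields $\i_n(f)(\i_n(a))=\i_n(f(a))$.

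For part (2), I would apply transfer to the bounded quantifier sentence
$$\psi(f,g,h)\;\equiv\;\text{dom}(h)=\text{dom}(g)\ \wedge\ \forall x\in\text{dom}(g)\ \bigl(h(x)=f(g(x))\bigr),$$
instantiated at $h:=f\circ g$. Since $\psi(f,g,f\circ g)$ holds by the very definition of composition, transfer produces $\psi(\s^n(f),\s^n(g),\s^n(f\circ g))$, which is precisely the equality $\s^n(f\circ g)=\s^n(f)\circ\s^n(g)$; Theorem \ref{n-dualtransfer} gives the analogous statement for $\i_n$. Alternatively, one can derive (2) from (1) pointwise, verifying the two functions agree on the common transferred domain.

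The only genuinely delicate step, and the one I would double-check, is the bookkeeping that enables Theorem \ref{n-dualtransfer} to be invoked in the internal half: one needs that whenever $f,a\in\VV_n$, the graph element $(a,f(a))$, together with its components, remains in $\VV_n$, and likewise that $f\circ g\in\VV_n$ when $f,g\in\VV_n$. Both are routine consequences of applying bounded transfer $n$ times to the analogous transitivity and closure properties of $\VV$; once they are in place, the proposition is entirely formal.
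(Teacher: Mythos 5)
Your argument is exactly the paper's (implicit) one: the paper offers no written proof, recording these identities as ``straight consequences of the transfer principle,'' and your proposal simply spells that out---encode function application and composition as bounded quantifier formulas, apply transfer for $\s^n$ and Theorem \ref{n-dualtransfer} for the internal map, and use the transitivity/closure of $\VV_n$ so that $f(a)$ and $f\circ g$ actually lie in $\text{dom}(\i_n)$---so it is correct and matches the intended route. One small repair: your formula $\varphi$ contains the unbounded quantifier $\forall z$, which the transfer principle as stated does not license; either bound it (e.g.\ $z\in\bigcup p$ for Kuratowski pairs) or, more simply, transfer just ``$(a,f(a))\in f$'' together with the bounded statement ``$f$ is a function,'' which already gives $\s^n(f)(\s^n(a))=\s^n(f(a))$, and likewise for the internal case.
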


According to the Remark \ref{intuitioninternalstarmap},
the $m$-th iteration of the $n$-th internal
star map would be the $m$-th iteration of the star map as seen within the universe $\VV_n$.
This intuition is formalized by the following property:

\begin{proposition}\label{compositions1}
For every $n,m\in\N$, the $m$-th iteration of the $n$-th
internal star-map $(\i_n)^m:\VV_n\to\VV_{n+m}$ is given by:
$$(\i_n)^m=
\s^n(\s^m):=
\bigcup_{X\in\VV}\s^n(\s^m\res X).$$
\end{proposition}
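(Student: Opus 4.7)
The plan is to proceed by induction on $m \geq 1$, with the key mechanism being the behavior of $\s^n$ on compositions of functions (Proposition~\ref{transferaboutfunctions}(2)). Before starting, I would briefly note that $\bigcup_{X\in\VV}\s^n(\s^m\res X)$ is a well-defined function $\VV_n\to\VV_{n+m}$ by the same argument used right after Definition~\ref{def-bistarmap}: if $X,Y\in\VV$ then $(\s^m\res X)\cap(\s^m\res Y)=\s^m\res(X\cap Y)$, so applying \emph{transfer} for $\s^n$ gives $\s^n(\s^m\res X)$ and $\s^n(\s^m\res Y)$ agreeing on $\s^n(X)\cap\s^n(Y)\supseteq\s^n(X\cap Y)$.

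The base case $m=1$ is the definition $\i_n=\s^n(\s)=\bigcup_{X}\s^n(\s\res X)$. For the inductive step, assume $(\i_n)^m(\xi)=\s^n(\s^m\res X)(\xi)$ for every $X\in\VV$ and every $\xi\in\s^n(X)$. Fix such $\xi$, and set
$$\eta:=(\i_n)^m(\xi)=\s^n(\s^m\res X)(\xi)\in\s^{n+m}(X).$$
Taking $Y:=\s^m(X)\in\VV$, we have $\eta\in\s^n(Y)$, so by the very definition of $\i_n$,
$$(\i_n)^{m+1}(\xi)=\i_n(\eta)=\s^n(\s\res Y)(\eta)=\s^n\bigl(\s\res\s^m(X)\bigr)\bigl(\s^n(\s^m\res X)(\xi)\bigr).$$

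The remaining point is the identity of functions in $\VV$:
$$\s^{m+1}\res X=\bigl(\s\res\s^m(X)\bigr)\circ\bigl(\s^m\res X\bigr),$$
which one checks pointwise: for $x\in X$, $\s^m(x)\in\s^m(X)$ and $\s^{m+1}(x)=\s(\s^m(x))$. Applying Proposition~\ref{transferaboutfunctions}(2) to the iterated star map $\s^n$ then gives
$$\s^n(\s^{m+1}\res X)=\s^n\bigl(\s\res\s^m(X)\bigr)\circ\s^n(\s^m\res X),$$
and evaluating at $\xi$ yields exactly the expression for $(\i_n)^{m+1}(\xi)$ computed above. This closes the induction and establishes both equalities in the statement.

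I do not expect any genuine obstacle: the proof is essentially bookkeeping with indices, and the one nontrivial ingredient is that $\s^n$ commutes with composition, which is precisely what Proposition~\ref{transferaboutfunctions}(2) delivers. The only care needed is to notice that the ``intermediate set'' over which $\i_n$ acts in the inductive step must be chosen as $Y=\s^m(X)$ (not $X$), since $\eta$ lives in $\s^{n+m}(X)=\s^n(\s^m(X))$ rather than in $\s^n(X)$.
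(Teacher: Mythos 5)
Your proof is correct and follows essentially the same route as the paper: induction on $m$ with base case the definition of $\i_n$, the choice of intermediate set $Y=\s^m(X)$, and the identity $\s^n(\s^{m+1}\res X)=\s^n(\s\res\s^m(X))\circ\s^n(\s^m\res X)$ obtained from the fact that $\s^n$ commutes with composition (Proposition~\ref{transferaboutfunctions}(2)). The only cosmetic difference is that the paper checks $\text{range}((\i_n)^m)\subseteq\VV_{n+m}$ via Proposition~\ref{rangeinternalmaps} where you instead verify well-definedness of the union defining $\s^n(\s^m)$; both are routine.
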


\begin{proof}
Note first that, since $\text{range}(\i_n)\subseteq\VV_{n+1}$,
by Proposition \ref{rangeinternalmaps} we have that
$\text{range}(\i_n\circ\i_n)\subseteq\i_n[\VV_{n+1}]\subseteq\VV_{n+2}$.
By iterating the argument, it is seen that $\text{range}((\i_n)^m)\subseteq\VV_{n+m}$
for every $m$.

Let us now fix $a\in\s^n(X)\in\VV_n$.
By induction on $m$ we want to show that
$(\i_n)^m(a)=\s^n(\s^m\res X)(a)$.
The base case $m=1$
is just the definition of $n$-th internal star map.
At the inductive step $m+1$, observe that
$\s^n(\s^m\res X):\s^n(X)\to\s^{n+m}(X)$.
Then, if let $Y=\s^{m}(X)$, we have that
\begin{multline*}
(\i_n)^{m+1}(a)=(\i_n\circ(\i_n)^m)(a)=
[\s^n(\s\res Y)\circ \s^n(\s^m\res X)](a)=
\\
=\s^n((\s\res Y)\circ(\s^m\res X))(a)=\s^n(\s^{m+1}\res X)(a).
\end{multline*}

By adopting a more compact notation where restrictions are not specified,
the above argument can be rewritten as follows:
$$(\i_n)^{m+1}=\i_n\circ(\i_n)^m=[\s^n(\s)]\circ[\s^n(\s^m)]=
\s^n(\s\circ\s^m)=\s^n(\s^{m+1}).$$
\end{proof}

For instance, the double internal star-map $\circledast\circledast:\VV_1\to\VV_3$,
\emph{i.e.} the composition of $\circledast$ with itself,
is obtained as the hyper-extension of the double star map $**$; that is:
$$\circledast\circledast=(\i_1)^2=\s(\s^2)=\bigcup_{X\in\VV}\s(\s^2\res X)$$
where $\s^2\res X:\xi\longmapsto{}^{**}\xi$ for every $\xi\in X$.


\smallskip
As we have seen in Proposition \ref{double}, we have that
$\circledast*=**$, that is, $\i_1\circ\s=\s^2$.
More generally, concerning the compositions of iterated star maps $\s^m$
with the $n$-th internal star map $\i_n$, the following equalities apply:

\begin{proposition}\label{compositions2}
Let $n\le m$. Then for every $\ell$, the composition
$$(\i_n)^\ell\circ\s^m=\s^{m+\ell}.$$
\end{proposition}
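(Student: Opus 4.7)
The plan is to reduce the identity to the single identity $\i_n(\s^n(b)) = \s^{n+1}(b)$, which is essentially Proposition~\ref{double} iterated, and then unwind the compositions using Propositions~\ref{compositions1} and~\ref{transferaboutfunctions}. The hypothesis $n \le m$ enters in exactly one place, namely to rewrite $\s^m$ as $\s^n \circ \s^{m-n}$, which then matches the ``inner'' $\s^n$ wrapped around $(\i_n)^\ell$.

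More concretely, here is how I would carry it out. Fix $a \in \VV$, pick $X \in \VV$ with $a \in X$, and let $Y := \s^{m-n}(X)$, so that $\s^m(X) = \s^n(Y)$ and $b := \s^{m-n}(a) \in Y$. Then $\s^m(a) = \s^n(b) \in \s^n(Y) \subseteq \VV_n$, so it lies in the domain of $(\i_n)^\ell$. By Proposition~\ref{compositions1}, the map $(\i_n)^\ell$ restricted to $\s^n(Y)$ is $\s^n(\s^\ell \res Y)$. Hence
\begin{equation*}
(\i_n)^\ell(\s^m(a)) = \s^n(\s^\ell \res Y)(\s^n(b)).
\end{equation*}
Now apply Proposition~\ref{transferaboutfunctions}(1) with $f = \s^\ell \res Y$: the right-hand side equals $\s^n((\s^\ell \res Y)(b)) = \s^n(\s^\ell(b)) = \s^n(\s^\ell(\s^{m-n}(a)))$. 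Since iterated applications of $\s$ compose additively in the exponent, this is $\s^{n + \ell + (m-n)}(a) = \s^{m+\ell}(a)$, as required.

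As a sanity check, one should verify that the independence of the representation $a \in X$ is automatic (since Definition of $\i_n$ is already well-posed via $\s\res(X\cap Y) = (\s\res X) \cap (\s\res Y)$), and that the case $\ell = 0$ is the trivial identity $\s^m = \s^m$. The only genuine obstacle is notational: one must be careful to distinguish the ``outer'' $\s^n$ (which is the iterated star map applied to the function $\s^\ell \res Y$, producing an internal function in $\VV_n$) from the ``inner'' $\s^n$ arising from the decomposition $\s^m = \s^n \circ \s^{m-n}$. Once that bookkeeping is in place, the proof is a direct one-line computation, which is why the condition $n \le m$ is exactly the right hypothesis: it is precisely what allows the outer $\s^n$ to ``cancel'' against the leading $\s^n$ of $\s^m$ via the functoriality recorded in Proposition~\ref{transferaboutfunctions}.
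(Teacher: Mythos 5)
Your proof is correct, and it takes a slightly different route from the paper's. The paper proves the case $\ell=1$ directly from the definition of $\i_n$ (writing out only $m=n$ explicitly, with general $m\ge n$ absorbed into the compact notation) and then runs a fresh induction on $\ell$, the inductive step being $\i_n\circ\s^{m+\ell}=\s^{m+\ell+1}$, i.e.\ the base case applied at the larger exponent. You instead eliminate the induction entirely by invoking Proposition~\ref{compositions1} to identify $(\i_n)^\ell$ on $\s^n(Y)$ with $\s^n(\s^\ell\res Y)$, and then the decomposition $\s^m=\s^n\circ\s^{m-n}$ together with one application of Proposition~\ref{transferaboutfunctions}(1) gives $(\i_n)^\ell(\s^m(a))=\s^n\bigl((\s^\ell\res Y)(b)\bigr)=\s^{m+\ell}(a)$. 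There is no circularity, since Proposition~\ref{compositions1} is proved before and independently of this statement; in effect you are reusing the induction already performed there rather than repeating it. The computational kernel is the same in both arguments, namely $\s^n(f)(\s^n(b))=\s^n(f(b))$; what your version buys is a uniform, non-inductive treatment of all $m\ge n$ and $\ell$ at once (and a clear isolation of where $n\le m$ is used), while the paper's version stays closer to the bare definition of $\i_n$ and sets up the inductive template that it then reuses verbatim in Propositions~\ref{compositions2bis}, \ref{compositions3} and \ref{compositions2bis2}. Your remarks on well-posedness and on $\ell=0$ are fine, though strictly unnecessary: $(\i_n)^\ell$ is already a well-defined function, so no independence-of-representative check is needed.
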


\begin{proof}
Let us first consider the case when $\ell=1$.

Let $a\in X\in\VV$. If $m=n$ then
$$(\i_n\circ\s^n)(a)=\i_n(\s^n(a))=
\s^n(\s\res X)(\s^n(a))=\s^n((\s\res X)(a))=\s^{n+1}(a).$$

By adopting a more compact notation where restrictions are not specified,
the above argument can be rewritten as follows:
$$\i_n\circ\s^n=
[\s^n(\s)]\circ\s^n=\s^n\circ\s=\s^{n+1}.$$

We now proceed by induction on $\ell$.
The base case $\ell=1$ has just been proved.
As for the inductive step, we observe that:
$$(\i_n)^{\ell+1}\circ\s^m=\i_n\circ[(\i_n)^\ell\circ\s^m]=\i_n\circ\s^{m+\ell}=\s^{m+\ell+1},$$
where the last equality again follows by the inductive base case.
\end{proof}

\begin{remark}
The composition $\i_n\circ\s^m$ is
not defined when $n>m$. Indeed, for every $X\in\VV$,
one has that $\s^m(X)\in\text{range}(\s^m)$, but
one can easily show that if $X\notin\VV_1$
then $\s^m(X)\notin\VV_{m+1}\subseteq\VV_n=\text{dom}(\i_n)$.
However, that composition is defined if one restricts to elements
of $\VV_{n-m}$. Indeed if $a\in\s^{n-m}(X)\in\VV_{n-m}$
then $\s^m(a)\in\s^n(X)\subseteq\VV_n$, and so the image $\i_n(\s^m(a))$ is defined.
We will call $\VV_{n-m}$ the natural domain of the map $\i_n\circ\s^m$ when
$n>m$ and assume implicitly that the domain of $\i_n\circ\s^m$ is $\VV_{n-m}$.
By the same reason we call $\VV_{k+n-m}$ the natural domain of the map
$\i_{k+n}\circ\i_k^m$ when $n>m$.
\end{remark}

\begin{proposition}\label{compositions2bis}
Let $n>m$. Then for every $\ell$, the restricted compositions to its natural domain
$$((\i_n)^\ell\circ\s^m)\res\VV_{n-m}=(\s^m\circ(\i_{n-m})^\ell)\res\VV_{n-m}.$$
\end{proposition}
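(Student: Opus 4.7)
The plan is to use the representation $(\i_k)^\ell = \s^k(\s^\ell) = \bigcup_{X \in \VV} \s^k(\s^\ell \res X)$ from Proposition~\ref{compositions1}, combined with the functional transfer rule $\s^m(f(a)) = \s^m(f)(\s^m(a))$ of Proposition~\ref{transferaboutfunctions}(1). Fixing $a \in \VV_{n-m}$ and picking $X \in \VV$ with $a \in \s^{n-m}(X)$, the internal iteration yields $(\i_{n-m})^\ell(a) = \s^{n-m}(\s^\ell \res X)(a)$. Applying $\s^m$ and invoking the functional transfer rule converts this into
$$
\s^m\bigl((\i_{n-m})^\ell(a)\bigr) \;=\; \s^m\bigl(\s^{n-m}(\s^\ell \res X)\bigr)\bigl(\s^m(a)\bigr) \;=\; \s^n(\s^\ell \res X)(\s^m(a)),
$$
the last equality holding because $\s^m \circ \s^{n-m} = \s^n$ as external iterates of the star map on $\VV$. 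On the other side, since $a \in \s^{n-m}(X)$ forces $\s^m(a) \in \s^n(X)$, Proposition~\ref{compositions1} directly gives $(\i_n)^\ell(\s^m(a)) = \s^n(\s^\ell \res X)(\s^m(a))$, so both sides agree.

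In the compact notation adopted in the proof of Proposition~\ref{compositions2}, the entire computation collapses to
$$
\s^m \circ (\i_{n-m})^\ell \;=\; \s^m \circ \s^{n-m}(\s^\ell) \;=\; \s^n(\s^\ell) \circ \s^m \;=\; (\i_n)^\ell \circ \s^m,
$$
where the central equality is one application of Proposition~\ref{transferaboutfunctions}(1) to the function $\s^{n-m}(\s^\ell)$.

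The main obstacle --- really the only point requiring care --- is the natural-domain bookkeeping introduced in the remark preceding the statement: one must verify that $\s^m(a) \in \s^n(X) \subseteq \VV_n$ whenever $a \in \s^{n-m}(X) \subseteq \VV_{n-m}$, so that $(\i_n)^\ell$ is defined on $\s^m(a)$, and that $\s^m \circ \s^{n-m} = \s^n$ at the level of external star iterates. Both facts are immediate, the first by applying $\s^m$ to the membership $a \in \s^{n-m}(X)$ and the second from the very definition of $\s^k$. Once these are in place, the proof is a close analogue of that of Proposition~\ref{compositions2}, differing only in that $\s^m$ now survives on both sides of the composition instead of being absorbed.
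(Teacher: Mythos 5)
Your proposal is correct, and its computational core is the same as the paper's: the paper's proof of the base case $\ell=1$ is exactly your calculation with $\s$ in place of $\s^\ell$, namely $\i_n(\s^m(a))=\s^n(\s\res X)(\s^m(a))=\s^m(\s^{n-m}(\s\res X))(\s^m(a))=\s^m(\s^{n-m}(\s\res X)(a))=\s^m(\i_{n-m}(a))$, resting on the same two facts you isolate (the rule $\s^m(f(a))=\s^m(f)(\s^m(a))$ of Proposition~\ref{transferaboutfunctions} and the identity $\s^m(\s^{n-m}(f))=\s^n(f)$ for objects of $\VV$, together with the natural-domain check $\s^m(a)\in\s^n(X)$). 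Where you diverge is in how the general $\ell$ is reached: the paper keeps only the $\ell=1$ case explicit and then runs an induction on $\ell$, rewriting $(\i_n)^{\ell+1}\circ\s^m=\i_n\circ[(\i_n)^\ell\circ\s^m]$ and commuting one factor at a time, whereas you invoke Proposition~\ref{compositions1} to represent $(\i_k)^\ell$ as $\s^k(\s^\ell)$ and dispose of all $\ell$ in a single application of the transfer rule. Your route is slightly more economical on the page because the induction is already packaged inside Proposition~\ref{compositions1}; the paper's route uses only the weakest ingredient (the $\ell=1$ identity) plus a formal induction, which keeps each step elementary and mirrors the structure of the proof of Proposition~\ref{compositions2}. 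Both arguments are sound, and your handling of the natural-domain issue ($a\in\s^{n-m}(X)\Rightarrow\s^m(a)\in\s^n(X)$, so that $(\i_n)^\ell$ is defined where needed) matches what the paper does implicitly.
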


\begin{proof}
%
Let us first consider the case when $\ell=1$.

Let $a\in\s^{n-m}(X)\in\VV_{n-m}$. We observe that
$\s^n(\s\res X):\s^{n}(X)\to\s^{n+1}(X)$
and $\s^{n-m}(\s\res X):\s^{n-m}(X)\to\s^{n-m+1}(X)$.
Since $\s^m(a)\in\s^{n}(X)$, then we have that:
\begin{multline*}
\i_n(\s^m(a))=\s^n(\s\res X)(\s^m(a))=\s^m(\s^{n-m}(\s\res X))(\s^m(a))=
\\
=\s^m(\s^{n-m}(\s\res X)(a))=\s^m(\i_{n-m}(a)).
\end{multline*}

In compact notation:
$$\i_n\circ\s^m=\s^n(\s)\circ\s^m=\s^m(\s^{n-m}(\s))\circ\s^m=
\s^m\circ\s^{n-m}(\s)=\s^m\circ\i_{n-m}.$$

We now proceed by induction on $\ell$.
The base case $\ell=1$ has just been proved.
As the inductive step, we observe that:
\begin{multline*}
(\i_n)^{\ell+1}\circ\s^m=\i_n\circ[(\i_n)^\ell\circ\s^m]=
\i_n\circ[\s^m\circ(\i_{n-m})^\ell]=
\\
=(\i_n\circ\s^m)\circ(\i_{n-m})^\ell=(\s^m\circ\i_{n-m})\circ(\i_{n-m})^\ell=
\s^m\circ(\i_{n-m})^{\ell+1}.
\end{multline*}

\end{proof}

%

%
%
%

\smallskip
According to Remark \ref{intuitioninternalstarmap},
living within the universe $\VV_k$,
the map $\i_k$ is viewed as the star map $*$, and thus the map $\i_{k+1}$
is viewed as the map $\i_1=\circledast$.
Consequently,
the equality $\circledast*=**$ corresponds to the
equality $\i_{k+1}\circ\i_k=\i_k\circ\i_k=(\i_k)^2$.
More generally, within $\VV_k$, the map $\i_{k+n}$
is viewed as the map $\i_n$, and thus the equality
of the previous proposition
corresponds to the following one.

\begin{proposition}\label{compositions3}
Let $n\le m$. Then
$(\i_{k+n})^\ell\circ(\i_k)^m=(\i_k)^{m+\ell}$ for all $k,\ell$.

\end{proposition}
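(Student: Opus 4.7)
The plan is to deduce the proposition by transferring Proposition \ref{compositions2} through the iterated star map $\s^k$. This matches the intuition spelled out just before the statement: from within the universe $\VV_k$, the map $\i_k$ plays the role of $\s$ and $\i_{k+n}$ plays the role of $\i_n$, so the ``external'' identity $(\i_n)^\ell \circ \s^m = \s^{m+\ell}$ should become, after applying $\s^k$, precisely the identity we want.

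Concretely I proceed in four short steps. First, I use Proposition \ref{compositions1} to rewrite $(\i_k)^m = \s^k(\s^m)$ and $(\i_{k+n})^\ell = \s^{k+n}(\s^\ell) = \s^k\bigl(\s^n(\s^\ell)\bigr) = \s^k\bigl((\i_n)^\ell\bigr)$. Second, I invoke Proposition \ref{transferaboutfunctions}(2) to pull the outer composition inside $\s^k$:
$$\s^k\bigl((\i_n)^\ell\bigr) \circ \s^k(\s^m) \;=\; \s^k\bigl((\i_n)^\ell \circ \s^m\bigr).$$
Third, I apply Proposition \ref{compositions2} (whose hypothesis $n \le m$ is ours) to replace the inner composition by $\s^{m+\ell}$. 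Fourth, a final use of Proposition \ref{compositions1} identifies $\s^k(\s^{m+\ell}) = (\i_k)^{m+\ell}$, which closes the chain.

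The main technical obstacle is that $\s^m$, $(\i_n)^\ell$, and their analogues are proper class maps rather than bona fide elements of $\VV$, so Proposition \ref{transferaboutfunctions}(2) cannot be applied to them verbatim. The identity in Step 2 must be unfolded chunk-by-chunk: for each $X \in \VV$ the restrictions $\s^m \res X$ and $(\i_n)^\ell \res \s^m(X) = \s^n(\s^\ell \res \s^{m-n}(X))$ (the latter using $n \le m$ to split $\s^m(X) = \s^n(\s^{m-n}(X))$) are genuine set-functions in $\VV$, and Proposition \ref{transferaboutfunctions}(2) applies to each such pair; the global identity then follows by taking the union over $X$. This is exactly the maneuver already present, implicitly, in the ``compact notation'' proofs of Propositions \ref{compositions2} and \ref{compositions2bis}, so only bookkeeping is involved. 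Alternatively, one can bypass the compact notation and run a direct induction on $\ell$ mirroring the proof of Proposition \ref{compositions2bis}, with the base case $\ell=1$ obtained from Proposition \ref{transferaboutfunctions}(1) applied pointwise to an element $a \in \s^k(X)$ after locating $(\i_k)^m(a) \in \s^{k+n}(\s^{m-n}(X))$.
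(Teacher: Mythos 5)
Your proof is correct and rests on exactly the paper's key maneuver: rewrite both maps via Proposition \ref{compositions1}, pull the composition inside $\s^k$ using Proposition \ref{transferaboutfunctions}(2), and invoke Proposition \ref{compositions2}. The only difference is that the paper transfers just the $\ell=1$ case of Proposition \ref{compositions2} through $\s^k$ and then runs an induction on $\ell$, whereas you transfer the general-$\ell$ statement in one shot, making the induction unnecessary; this is a legitimate minor streamlining, and your unfolding of the compact notation via restrictions (using $n\le m$ to write $\s^m(X)=\s^n(\s^{m-n}(X))$, so that $(\i_n)^\ell\res\s^m(X)=\s^n(\s^\ell\res\s^{m-n}(X))$ is a genuine element of $\VV$) is precisely the bookkeeping the paper leaves implicit in its compact notation.
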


\begin{proof}
We proceed by induction on $\ell$.
Towards the base case $\ell=1$, recall from Proposition \ref{compositions1} that
$(\i_k)^m=\s^k(\s^m)$ for every $m$.
Then, by Proposition \ref{compositions2}, we obtain:
\begin{multline*}
\i_{k+n}\circ(\i_k)^m=\s^{k+n}(\s)\circ\s^k(\s^m)=\s^k(\s^n(\s))\circ\s^k(\s^m)=
\\
=\s^k(\s^n(\s)\circ\s^m)=\s^k(\i_n\circ\s^m)=\s^k(\s^{m+1})=(\i_k)^{m+1}.
\end{multline*}

At the inductive step, we observe that:
$$(\i_{k+n})^{\ell+1}\circ(\i_k)^m=\i_{k+n}\circ[(\i_{k+n})^\ell\circ(\i_k)^m]=
\i_{k+n}\circ(\i_k)^{m+\ell}=(\i_k)^{m+\ell+1},$$
where the last equality follows by the base case of the induction.
\end{proof}

\begin{remark}
The composition $\i_{k+n}\circ(\i_k)^m$ is
not defined when $n>m$. Indeed, for every $X\in\VV$,
one has that $\s^{m+k}(X)=((\i_k)^m\circ\s^k)(X)\in\text{range}((\i_k)^m)$, but
one can easily show that if $X\notin\VV_1$
then $\s^{m+k}(X)\notin\VV_{m+k+1}\supseteq\VV_{k+n}=\text{dom}(\i_{k+n})$.
However, that composition is defined if one restricts to elements
of $\VV_{k+n-m}$. Indeed if $a\in\VV_{k+n-m}$
then $(\i_k)^m(a)\in\VV_{k+n}$, and so the image $\i_{k+n}(a)$ is defined.
\end{remark}

\begin{proposition}\label{compositions2bis2}
Let $n>m$. Then for every $\ell$, the restricted compositions to its natural domain
$$((\i_{k+n})^\ell\circ(\i_k)^m)\res\VV_{k+n-m}=
((\i_k)^m\circ(\i_{k+n-m})^\ell)\res\VV_{k+n-m}.$$
\end{proposition}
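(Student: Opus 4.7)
My plan is to argue by induction on $\ell$, in close parallel to the proofs of Propositions \ref{compositions2bis} and \ref{compositions3}. The strategy is to rewrite everything using the compact notation $\i_j = \s^j(\s)$ and $(\i_k)^m = \s^k(\s^m)$ afforded by Proposition \ref{compositions1}, and then exploit that $\s^k$ distributes over composition (Proposition \ref{transferaboutfunctions}(2)) to reduce the identity to the already-established Proposition \ref{compositions2bis}.

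For the base case $\ell = 1$, I would compute
$$\i_{k+n} \circ (\i_k)^m = \s^k(\s^n(\s)) \circ \s^k(\s^m) = \s^k\bigl(\s^n(\s) \circ \s^m\bigr) = \s^k(\i_n \circ \s^m).$$
By Proposition \ref{compositions2bis} (the case $\ell = 1$), the restricted identity $\i_n \circ \s^m = \s^m \circ \i_{n-m}$ holds on $\VV_{n-m}$. Taking $\s^k$ of this pointwise identity, again via Proposition \ref{transferaboutfunctions}(2), yields $\s^k(\s^m \circ \i_{n-m}) = (\i_k)^m \circ \i_{k+n-m}$ on the natural domain $\VV_{k+n-m}$. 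For the inductive step, I would then argue exactly as in the proof of Proposition \ref{compositions3}:
\begin{align*}
(\i_{k+n})^{\ell+1} \circ (\i_k)^m
&= \i_{k+n} \circ \bigl((\i_{k+n})^\ell \circ (\i_k)^m\bigr) \\
&= \i_{k+n} \circ (\i_k)^m \circ (\i_{k+n-m})^\ell \\
&= (\i_k)^m \circ \i_{k+n-m} \circ (\i_{k+n-m})^\ell = (\i_k)^m \circ (\i_{k+n-m})^{\ell+1},
\end{align*}
all restricted to $\VV_{k+n-m}$; here the second equality uses the inductive hypothesis, and the third is the base case applied at the intermediate value $(\i_{k+n-m})^\ell(a) \in \VV_{k+n-m+\ell} \subseteq \VV_{k+n-m}$ (by Proposition \ref{structure1}(3)), which still lies in the natural domain of the base case.

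The main obstacle I anticipate is the careful bookkeeping of natural domains, since the identity in Proposition \ref{compositions2bis} only holds on the restricted class $\VV_{n-m}$; a plain global application of $\s^k$ to the symbolic equation ``$\i_n \circ \s^m = \s^m \circ \i_{n-m}$'' is not immediately meaningful. To make the $\s^k$-step rigorous, I would work pointwise on each $X \in \VV$: fix $a \in \s^{k+n-m}(X)$ and expand $(\i_k)^m(a) = \s^k(\s^m \res \s^{n-m}(X))(a)$ and $\i_{k+n}(b) = \s^k(\s^n(\s \res X))(b)$. The pointwise identity $\s^n(\s \res X) \circ (\s^m \res \s^{n-m}(X)) = (\s^m \res \s^{n-m+1}(X)) \circ \s^{n-m}(\s \res X)$ on $\s^{n-m}(X)$, which is what the proof of Proposition \ref{compositions2bis} actually establishes at $\ell = 1$, is a bona fide equality between functions that are elements of $\VV$, and so $\s^k$ can be legitimately applied to both sides. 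Unfolding the resulting equation at $a$ gives precisely $\i_{k+n}((\i_k)^m(a)) = (\i_k)^m(\i_{k+n-m}(a))$, establishing the base case and thereby completing the induction.
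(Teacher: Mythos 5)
Your argument is correct and is essentially the paper's own proof: the same compact-notation reduction $\i_{k+n}\circ(\i_k)^m=\s^k(\i_n\circ\s^m)=\s^k(\s^m\circ\i_{n-m})=(\i_k)^m\circ\i_{k+n-m}$ via Propositions \ref{compositions1}, \ref{transferaboutfunctions} and \ref{compositions2bis}, followed by the same induction on $\ell$. Your extra pointwise justification (applying $\s^k$ to the genuine equality of restricted functions $\s^n(\s\res X)\circ(\s^m\res\s^{n-m}(X))=(\s^m\res\s^{n-m+1}(X))\circ\s^{n-m}(\s\res X)$, which are elements of $\VV$) only makes explicit what the paper's compact notation leaves implicit, so no changes are needed.
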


\begin{proof}
Let us first consider the case when $\ell=1$.
By the Proposition \ref{compositions2bis}, we have:
\begin{multline*}
\i_{k+n}\circ(\i_k)^m=\ \ldots\ =\s^k(\i_n\circ\s^m)=\s^k(\s^m\circ\i_{n-m})=
\\
=\s^k(\s^m\circ\s^{n-m}(\s))=
\s^k(\s^m)\circ\s^{k+n-m}(\s)=(\i_k)^m\circ\i_{k+n-m}.
\end{multline*}

At the inductive step, note that:
\begin{multline*}
(\i_{k+n})^{\ell+1}\circ(\i_k)^m=\i_{k+n}\circ[(\i_{k+n})^\ell\circ(\i_k)^m]=
\\
=\i_{k+n}\circ[(\i_k)^m\circ(\i_{k+n-m})^\ell]=
[\i_{k+n}\circ(\i_k)^m]\circ(\i_{k+n-m})^\ell=
\\
=[(\i_k)^m\circ\i_{k+n-m}]\circ(\i_{k+n-m})^\ell
=(\i_k)^m\circ(\i_{k+n-m})^{\ell+1}.
\end{multline*}

\end{proof}

\smallskip
As seen above, the internal star maps $\i_{n+1}$ and $\i_n$
coincide on all elements of the form $\i_n(x)$.
We now show that there are no other elements on which $\i_{n+1}$ and $\i_n$ coincide.

\begin{proposition}\label{internalmapscommute2}
Let $\xi,\eta\in\s^{n+1}(X)$. Then $\i_{n+1}(\xi)=\i_n(\eta)$ if and only
there exists $x\in\s^n(X)$ such that $\xi=\eta=\i_n(x)$.
\end{proposition}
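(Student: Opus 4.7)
The statement is the $n$-th internal analogue of Proposition \ref{star=dast} (which is the case $n=0$, with $\s$ playing the role of $\i_0$ and $\circledast$ playing the role of $\i_1$). My plan is to derive it by applying the bounded elementary embedding $\s^n$ to Proposition \ref{star=dast}, suitably reformulated as a bounded quantifier statement with set-theoretic parameters.

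First, I would reformulate Proposition \ref{star=dast} without using the meta-theoretic symbols $\s$ and $\circledast$ directly. Set $f:=\s\res X$, $g:=\s\res\s(X)$, and $h:=\s(f)$; all three are elements of $\VV$. On $X$, the star map acts as $f$; on $\s(X)$, the star map acts as $g$ and the internal star map $\circledast$ acts as $h$. Proposition \ref{star=dast} then becomes the bounded quantifier sentence
$$\varphi(X,f,g,h):\quad \forall\,\xi,\eta\in\s(X)\ \bigl[\,h(\xi)=g(\eta)\ \Leftrightarrow\ \exists\,x\in X\ (\xi=\eta=f(x))\,\bigr],$$
which holds in $\VV$ by Proposition \ref{star=dast}.

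Applying transfer to the $n$-iterated bounded elementary map $\s^n$ then yields $\varphi(\s^n(X),\s^n(f),\s^n(g),\s^n(h))$. The substantive step is to recognize each of these $\s^n$-extensions as a restriction of an internal star map. From the definition $\i_n=\bigcup_Y\s^n(\s\res Y)$ one has $\s^n(f)=\i_n\res\s^n(X)$, and (taking $Y=\s(X)$) also $\s^n(g)=\i_n\res\s^{n+1}(X)$; while $\s^n(h)=\s^{n+1}(\s\res X)=\i_{n+1}\res\s^{n+1}(X)$ by the definition of $\i_{n+1}$. Substituting these identifications into the transferred formula delivers exactly the equivalence in the statement.

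The only genuine obstacle is this bookkeeping: verifying that $\s^n$ of each of the three restricted star maps is correctly matched with $\i_n$ or $\i_{n+1}$. This rests on the well-posedness of the union defining $\i_n$, which ensures in particular that $\s^n(\s\res X)$ and $\s^n(\s\res \s(X))$ agree as restrictions of a single map $\i_n$. As a sanity check, the forward implication is also immediately available from Proposition \ref{compositions3} (with $k=n$ and $n=m=\ell=1$), which gives $\i_{n+1}\circ\i_n=(\i_n)^2$ and hence $\i_{n+1}(\i_n(x))=\i_n(\i_n(x))$ for any $x\in\s^n(X)$; this provides a convenient cross-check that the identification of maps has been done correctly.
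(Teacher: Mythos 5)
Your proposal is correct and follows essentially the same route as the paper: both express Proposition \ref{star=dast} as a bounded quantifier sentence whose parameters are the relevant restrictions of the star map, apply \emph{transfer} with respect to $\s^n$, and then read off the transferred restrictions as $\i_n$ and $\i_{n+1}$ via their defining unions. If anything, your explicit bookkeeping with $f=\s\res X$, $g=\s\res\s(X)$, $h=\s(f)$ is slightly more careful than the paper's notation, which writes $(\s\res X)(\eta)$ where $(\s\res\s(X))(\eta)$ is meant.
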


\begin{proof}
By Proposition \ref{star=dast}, for any fixed $X\in\VV$, the following is true:
$$\forall \xi,\eta\in\s(X)\ ({}^\circledast\xi={}^*\eta\ \leftrightarrow\  \exists x\in X\ \xi=\eta={}^*x).$$
By using the star map symbol $\s$, we can reformulate as follows:
$$\forall \xi,\eta\in\s(X)\ \ \s(\s\res X)(\xi)=(\s\res X)(\eta)\ \leftrightarrow\
\exists x\in X\ \xi=\eta=(\s\res X)(x).$$
Then by \emph{transfer} applied to the map $\s^n$, we obtain the desired property:
\begin{multline*}
\forall \xi,\eta\in\s^{n+1}(X)\ \ \s^{n+1}(\s\res X)(\xi)=\s^n(\s\res X)(\eta)\ \leftrightarrow\
\\
\leftrightarrow\ \exists x\in\s^n(X)\ \xi=\eta=\s^n(\s\res X)(x).
\end{multline*}
\end{proof}

\emph{E.g.}, when $n=1$ we have that for all
$\xi,\eta\in\VV_2=\bigcup_{X\in\VV}{}^{**}\!X$, one has
$\i_2(\xi)=\i_1(\eta)={}^\circledast\eta$ if and only if
$\xi=\eta={}^\circledast x$ for some $x\in\VV_1=\bigcup_{X\in\VV}\hX$.

\begin{remark}\label{internalpullback}
The previous proposition states that the following
commutative diagram is the \emph{pullback} in the category of sets
of the embeddings $\i_{n+1}:\VV_{n+1}\to\VV_{n+2}$
and $\i_n\res\VV_{n+1}:\VV_{n+1}\to\VV_{n+2}$:
$$\begin{tikzcd}
\VV_n\arrow{r}{\i_n}
\arrow{d}{\i_n} &
\VV_{n+1}\arrow{d}{\i_{n+1}} \\
\VV_{n+1}\arrow{r}{\i_n\,\res\,\VV_{n+1}} & \VV_{n+2}
\end{tikzcd}$$
\end{remark}

\smallskip
Let us now see a few examples to illustrate the use of
the equalities proved above.

Observe that the equality $\i_n\circ\s^n=\s^{n+1}$ means that
the $n$-th internal star map $\i_n$ coincides with the
star map $\s$ on those elements that are $n$-th iterated star images,
\emph{i.e.} elements of the form $\s^n(x)$ for some $x\in\VV$.
Thus for every $x\in\VV$ one has that:

\begin{itemize}
\item
$\i_1({}^*x)={}^{\circledast*}x={}^{**}x$;\footnote
{~As already seen in Proposition \ref{double}.}
\item
$\i_2({}^{**}x)=\i_1({}^{**}x)={}^{***}x$;
\item
$\i_3({}^{***}x)=\i_2({}^{***}x)=\i_1({}^{***}x)={}^{****}x$, and so forth.
\end{itemize}

Similarly, the equality $\i_{k+1}\circ\i_k=\i_k\circ\i_k$
means that the internal star maps $\i_{k+1}$ and $\i_k$
coincide on all elements of the form $\i_k(x)$. \emph{E.g.}, when $k=1$
one has that $\i_2({}^\circledast x)={}^{\circledast\circledast}x$ for every $x\in\VV_1$.

More generally, when $n\le m$, the equality $\i_{k+n}\circ(\i_k)^m=(\i_k)^{m+1}$ means that
internal star maps $\i_{k+n}$ and $\i_k$ coincides on all
elements that are $m$-th iterated $k$-th internal star images,
\emph{i.e.} elements of the form $(\i_k)^m(x)$ for some $x\in\VV_k$.
\emph{E.g.}, when $k=1$, for every $x\in\VV_1$ one has:

\begin{itemize}
\item
$\i_3({}^\circledast x)=\i_2({}^\circledast x)={}^{\circledast\circledast}x$;
\item
$\i_3({}^{\circledast\circledast}x)={}^{\circledast\circledast\circledast}x$;
\item
$\i_4({}^\circledast x)={}^{\circledast\circledast}x$;
\item
$\i_4({}^{\circledast\circledast}x)={}^{\circledast\circledast\circledast}x$;
\item
$\i_4({}^{\circledast\circledast\circledast}x)={}^{\circledast\circledast\circledast\circledast}x$, and so forth.
\end{itemize}

\subsection{Nonstandard natural and real numbers of higher levels}

\

\noindent
The hypernatural numbers $\hN$ and the hyperreal numbers $\hR$
are fundamental objects of nonstandard analysis.
In our context, we can also consider ``higher level" nonstandard extensions
of the natural and real numbers.

\begin{definition}
Let $\N_0=\N$ and $\R_0=\R$; and, for $n\in\N$,
let $\N_n=\s^n(\N)$ and $\R_n=\s^n(\R)$.

We will refer to $\N_n$ and $\R_n$ as the nonstandard natural numbers
and nonstandard real numbers of \emph{level} $n$, respectively.

\end{definition}


\smallskip
It is readily seen that $\N_n,\R_n\in\VV_n$ belong
to the $n$-th internal universe, where we agree that $\VV_0=\VV$.
However, both $\N_n$ and $\R_n$ are ``external" objects with
respect to the internal universes $\VV_m$ where $m>n$.

\begin{proposition}
For every $n<m$, the sets $\N_n,\R_n\notin\VV_m$.
\end{proposition}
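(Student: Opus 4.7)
The plan is to reduce this directly to the previously proved special case $m = n+1$ by exploiting the decreasing chain of internal universes.

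Recall from Proposition \ref{structure1}(3) that the internal universes form a strictly decreasing chain $\VV \supset \VV_1 \supset \VV_2 \supset \cdots$. Consequently, for every $m > n$, iterating this inclusion gives $\VV_m \subseteq \VV_{n+1}$. The earlier proposition in the excerpt already showed that $\s^n(\N), \s^n(\R) \notin \VV_{n+1}$: indeed, since $\N$ and $\R$ are external, they do not belong to any hyper-extension $\hX$, and applying \emph{transfer} with respect to $\s^n$ (Theorem \ref{n-dualtransfer} in the iterated form, or simply the fact that $\s^n$ is a bounded elementary embedding) yields $\s^n(\N), \s^n(\R) \notin \s^{n+1}(X)$ for every $X \in \VV$, hence $\s^n(\N), \s^n(\R) \notin \VV_{n+1}$.

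Combining the two facts, if $m > n$ then $\VV_m \subseteq \VV_{n+1}$, so from $\N_n = \s^n(\N) \notin \VV_{n+1}$ and $\R_n = \s^n(\R) \notin \VV_{n+1}$ we conclude $\N_n, \R_n \notin \VV_m$, as required. There is no real obstacle here; the statement is essentially an immediate corollary of the earlier proposition together with the monotonicity of the chain of internal universes. The only point worth being slightly careful about is the inclusion $\VV_m \subseteq \VV_{n+1}$ for all $m \ge n+1$, which follows by a trivial induction from $\VV_{k+1} \subset \VV_k$.
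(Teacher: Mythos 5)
Your argument is correct and is essentially the paper's own proof: both establish $\N_n,\R_n\notin\VV_{n+1}$ by applying \emph{transfer} with $\s^n$ to the externality of $\N$ and $\R$, and then conclude via the inclusion $\VV_m\subseteq\VV_{n+1}$ for $m>n$ coming from the decreasing chain of internal universes. No gaps.
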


\begin{proof}
It is a basic fact in nonstandard analysis that $\N$ and $\R$ are
``external" sets, \emph{i.e.}, they do not belong to the (first)
internal universe $\VV_1$.
This means that $\N,\R\notin\hX$ for any $X\in\VV$. Then,
by \emph{transfer} applied to the iterated star map $\s^n$,
one obtains that $\N_n,\R_n\notin\s^n(\hX)=\s^{n+1}(X)$ for any $X\in\VV$, and
hence $\N_n,\R_n\notin\VV_{n+1}$.
Finally, note that if $m>n$ then $\VV_m\subseteq\VV_{n+1}$.
\end{proof}


Starting from the proper inclusions $\N_0=\N\subset\hN=\N_1$
and $\R_0=\R\subset\hR=\R_1$, by \emph{transfer} we obtain the following increasing chains:
$$\N_0\subset\N_1\subset\cdots\subset\N_n\subset\N_{n+1}\subset\cdots$$
$$\R_0\subset\R_1\subset\cdots\subset\R_n\subset\R_{n+1}\subset\cdots$$
It is interesting to note that the direction of the inclusions above is opposite of
the direction in (2) of Proposition \ref{structure1}.



\begin{proposition}
For all $n>m$, the ordered set $\N_n$ is an end-extension of $\N_m$, i.e.,
$\N_m\subset\N_n$ and $x<y$ for all
$x\in\N_m$ and $y\in\N_n\setminus\N_m$.
\end{proposition}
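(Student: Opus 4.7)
The plan is to bootstrap from the base case---namely that $\N$ is an initial segment of $\hN$, already proved in the preceding proposition---via iterated transfer, and then close the gap by transitivity of the initial-segment relation.

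First, I would observe that the conjunction
$$\N \subseteq \hN \ \wedge\ \forall x \in \N\ \forall y \in \hN\ (y \notin \N \to x < y)$$
is a bounded-quantifier formula whose parameters $\N$ and $\hN$ both lie in $\VV$; its truth is exactly the content of items $(1)$--$(3)$ of the preceding proposition. Applying the transfer principle via the iterated star map $\s^m$, and using $\s^m(\N)=\N_m$ together with $\s^m(\hN)=\s^{m+1}(\N)=\N_{m+1}$, one obtains
$$\N_m \subseteq \N_{m+1} \ \wedge\ \forall x \in \N_m\ \forall y \in \N_{m+1}\ (y \notin \N_m \to x < y),$$
that is, $\N_m$ is an initial segment of $\N_{m+1}$ for every $m\ge 0$.

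Finally, since the initial-segment relation is transitive (for $A\subseteq B\subseteq C$ with $A$ an initial segment of $B$ and $B$ an initial segment of $C$, any $y\in C\setminus A$ lies either in $B\setminus A$ or in $C\setminus B$, and in both cases $x<y$ for every $x\in A$), a straightforward induction on $n-m\ge 1$ along the chain $\N_m \subseteq \N_{m+1} \subseteq \cdots \subseteq \N_n$ yields the claim. I do not anticipate any serious obstacle; the only mildly delicate point is verifying that the formula being transferred is indeed bounded-quantifier with all its parameters in $\VV$, so that transfer via $\s^m$ is legitimate. The argument is otherwise a direct generalisation of the case $m=1$, $n=2$ already handled in item $(3)$ of the preceding proposition.
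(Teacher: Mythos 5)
Your proof is correct and follows essentially the same route as the paper: transfer the true statement ``$\hN$ is an end-extension of $\N$'' along $\s^m$ to get that $\N_{m+1}$ end-extends $\N_m$, then conclude by transitivity of the end-extension relation. (Only a cosmetic quibble: the base fact that $\N$ is an initial segment of $\hN$ is a standard fact the paper invokes directly, rather than the content of items (1)--(3) of the earlier proposition, whose item (3) concerns $\hN$ inside ${}^{**}\N$.)
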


\begin{proof}
It is enough to show that $\N_{m+1}$ is an end-extension of $\N_m$
for every $m\in\N$. Indeed, the binary relation of ``end-extension" is a transitive one.

Consider the true sentence: ``$\hN$ is an end-extension of $\N$",
and apply \emph{transfer} with respect to the bounded elementary embedding $\s^m$.
Then we obtain that $\N_{m+1}$ is an end-extension of $\N_m$.
\end{proof}

\begin{corollary}
${}^*\xi>\eta$ for every $\xi\in\N_n\setminus\N_{n-1}$ and for every $\eta\in\N_n$.
\end{corollary}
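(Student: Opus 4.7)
The plan is to leverage the preceding proposition asserting that $\N_{n+1}$ is an end-extension of $\N_n$, and reduce the problem to proving that ${}^*\xi$ lies in $\N_{n+1}\setminus\N_n$. Once this is established, the end-extension property immediately yields $\eta<{}^*\xi$ for every $\eta\in\N_n$, which is exactly the conclusion.

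First I would verify membership in $\N_{n+1}$: since $\xi\in\N_n=\s^n(\N)$, we have ${}^*\xi=\s(\xi)\in\s(\N_n)=\s^{n+1}(\N)=\N_{n+1}$. This is automatic from the definitions.

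The substantive step is to show ${}^*\xi\notin\N_n$. Rewriting $\N_n=\s(\N_{n-1})$ using the recursive definition $\N_n=\s^n(\N)=\s(\s^{n-1}(\N))$, I would apply the \emph{transfer principle} to the bounded quantifier formula ``$x\in y$'' with parameters $\xi$ and $\N_{n-1}$ (both of which lie in $\VV$), obtaining the equivalence $\xi\in\N_{n-1}\Leftrightarrow\s(\xi)\in\s(\N_{n-1})=\N_n$. Since $\xi\notin\N_{n-1}$ by hypothesis, we conclude ${}^*\xi\notin\N_n$. Together with the previous step this gives ${}^*\xi\in\N_{n+1}\setminus\N_n$, and the end-extension property closes the argument.

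I do not foresee a genuine obstacle; the only subtlety is recognizing that mere elementarity of $\s$ on the two-variable membership formula, together with the factorization $\N_n=\s(\N_{n-1})$, suffices to ``pull back'' the statement $\s(\xi)\in\N_n$ to $\xi\in\N_{n-1}$. No recourse to the internal star maps $\i_k$ or to compositions proven in the preceding subsection is needed, since the end-extension result has already done the heavy lifting of comparing elements across consecutive levels.
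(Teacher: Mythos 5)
Your proof is correct and follows essentially the same route as the paper: the paper's own argument also observes that $\xi\in\N_n\setminus\N_{n-1}$ implies ${}^*\xi\in\N_{n+1}\setminus\N_n$ and then invokes the end-extension property of $\N_{n+1}$ over $\N_n$. Your only addition is to spell out, via \emph{transfer} of the membership formula with parameters $\xi$ and $\N_{n-1}$ (resp.\ $\N_n$), why ${}^*\xi$ lands in $\N_{n+1}\setminus\N_n$, which the paper leaves implicit.
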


\begin{proof}
Note that if $\xi\in\N_n\setminus\N_{n-1}$ then ${}^*\xi\in\N_{n+1}\setminus\N_n$.
Then apply the property that $\N_{n+1}$ is an end-extension of $\N_n$.
\end{proof}

\smallskip
For instance, we have that ${}^*\xi>\eta$ for every $\xi\in\hN\setminus\N$ and
for every $\eta\in\hN$; in particular ${}^*\xi>\xi$ for every $\xi\in\hN\setminus\N$.

\begin{proposition}
\

\begin{enumerate}
\item
$\i_n(\xi)=\xi$ for every $\xi\in\N_n$.
\item $\i_n(\xi)\in\N_{n+m+2}\setminus\N_{n+m+1}$ for every
$\xi\in\N_{n+m+1}\setminus\N_{n+m}$.
\item
$\i_n(\xi)>\eta$ for every $\xi\in\N_{n+m+1}\setminus\N_{n+m}$
and every $\eta\in\N_{n+m+1}$.
\end{enumerate}
\end{proposition}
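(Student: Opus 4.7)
The plan is to derive all three parts by applying \emph{transfer} under the iterated star map $\s^n$ to bounded quantifier statements that live in the original universe $\VV$, and then to read off (3) from (2) via the end-extension property of the chain $(\N_k)$.

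For (1) I would start from the trivial identity ``$\forall x\in\N,\ (\s\res\N)(x)=x$'' in $\VV$, which holds because $\s(r)=r$ for every $r\in\R\supseteq\N$. Applying \emph{transfer} under $\s^n$ yields $\s^n(\s\res\N)(\xi)=\xi$ for every $\xi\in\s^n(\N)=\N_n$, and the well-posedness of the definition $\i_n=\bigcup_{X\in\VV}\s^n(\s\res X)$ identifies $\s^n(\s\res\N)$ with the restriction of $\i_n$ to $\N_n$. This gives (1).

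For (2), the key input is the preceding Corollary applied at level $m+1$: if $\xi\in\N_{m+1}\setminus\N_m$, then $\s(\xi)>\eta$ for every $\eta\in\N_{m+1}$, and since $\s(\xi)\in\N_{m+2}$ this forces $\s(\xi)\in\N_{m+2}\setminus\N_{m+1}$. Because $\N_m,\N_{m+1},\N_{m+2}$ and the restriction $\s\res\N_{m+1}$ all belong to $\VV$, this is a true bounded quantifier sentence in $\VV$:
$$\forall x\in\N_{m+1}\setminus\N_m,\ \ (\s\res\N_{m+1})(x)\in\N_{m+2}\setminus\N_{m+1}.$$
Applying \emph{transfer} under $\s^n$, and using that $\s^n$ commutes with set-theoretic difference and sends $\N_k$ to $\N_{n+k}$, turns this into
$$\forall\xi\in\N_{n+m+1}\setminus\N_{n+m},\ \ \s^n(\s\res\N_{m+1})(\xi)\in\N_{n+m+2}\setminus\N_{n+m+1}.$$
Choosing $X=\N_{m+1}$ in the union defining $\i_n$ identifies $\s^n(\s\res\N_{m+1})$ with $\i_n$ on $\N_{n+m+1}$, which yields (2). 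Part~(3) is then immediate: by (2), $\i_n(\xi)\in\N_{n+m+2}\setminus\N_{n+m+1}$, and the preceding Proposition tells us that $\N_{n+m+2}$ end-extends $\N_{n+m+1}$, so $\i_n(\xi)>\eta$ for every $\eta\in\N_{n+m+1}$.

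No real obstacle is expected; the whole argument is a careful bookkeeping of indices coupled with routine applications of \emph{transfer}. The only delicate point is to verify, in each of (1) and (2), that the chosen restriction $\s\res X$ ($X=\N$ and $X=\N_{m+1}$ respectively) is a legitimate witness in the union defining $\i_n$, so that $\s^n(\s\res X)$ genuinely coincides with $\i_n$ on the appropriate domain. This is precisely the well-posedness of the definition already verified in the paragraph immediately following it.
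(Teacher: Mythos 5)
Your proof is correct and follows essentially the same route as the paper: parts (1) and (2) are obtained by applying \emph{transfer} under $\s^n$ to the ground-level sentences about $\s\res\N$ and $\s\res\N_{m+1}$ (with the identification of $\s^n(\s\res X)$ with $\i_n$ on $\s^n(X)$), and (3) follows from (2) via the end-extension property. Your extra care in justifying the ground sentence for (2) through the preceding Corollary is a harmless elaboration of what the paper leaves implicit.
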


\begin{proof}
(1).\ We know that ${}^*n=n$ for every $n\in\N$.
Then, by applying \emph{transfer} to the sentence:
``$\forall x\in\N\ (\s\res\N)(x)=x$" with respect to the map $\s^n$, we obtain
that ``$\forall \xi\in\s^n(\N)\ \s^n(\s\res\N)(\xi)=\xi$", as desired.

\smallskip
(2).\ Apply \emph{transfer} with respect to the map $\s^n$ to the following sentence:
$$\forall x\in\N_{m+1}\setminus\N_{m}\,\left(\s(x)\in\N_{m+2}\setminus\N_{m+1}\right).$$
We obtain
that ``$\forall \xi\in\N_{n+m+1}\setminus\N_{n+m}\,
\left(\i_n(\xi)\in\N_{n+m+2}\setminus\N_{n+m+1}\right)$", as desired.

\smallskip
(3).\ It is an easy consequence of (2).
\end{proof}

\smallskip
For instance, we have that ${}^\circledast\xi>\eta$
for every $\xi\in{}^{**}\N\setminus\hN$ and
for every $\eta\in{}^{**}\N$; in particular ${}^\circledast\xi>\xi$ for every $\xi\in{}^{**}\N\setminus\hN$.
We then have that
$\i_2(\xi)>\eta$ for every $\xi\in{}^{**}\N\setminus\hN$ and
for every $\eta\in{}^{**}\N$; and so forth.

\begin{remark}
If $\xi,\eta\in\N_n\setminus\N_{n-1}$ for some $n\geq 2$ then both inequalities
${}^*\xi<{}^\circledast\eta$ and ${}^*\xi>{}^\circledast\eta$ are possible.
\emph{E.g.}, let $\xi={}^*a$ and $\eta={}^*b$
be elements of $\N_2\setminus\N_1$ where $a,b\in\hN\setminus\N$.
Then ${}^*\xi={}^{**}a$ and ${}^\circledast\eta={}^{**}b$ and so,
by \emph{transfer},
we have that ${}^*\xi<{}^\circledast\eta$ if and only if $a<b$.

Similarly, one can show that for $\xi,\eta\in\N_3\setminus\N_2$,
all possible (compatible) inequalities can hold between the pairs
${}^*\xi$ and ${}^\circledast\eta$, ${}^\circledast\xi$ and $\i_2(\eta)$,
${}^*\xi$ and $\i_2(\eta)$; and more generally, for all $n<m$, for
pairs ${}^*\xi$ and $\i_n(\eta)$, $\i_n(\xi)$ and $\i_m(\eta)$,
${}^*\xi$ and $\i_m(\eta)$.
\end{remark}

\subsection{Adding external objects to embeddings}
\

\noindent
In all the above we have considered bounded elementary
embedding between transitive sets, considered
as structures in the language of set theory where the
membership relation is interpreted as the true membership.

As we have seen above, the sets $\N_n,\R_n\notin\VV_m$ when $n<m$.
However, since $\N_n\subset\R_n\subset\R_m\subset\VV_m$,
one can add those ``external" sets to the structure $(\VV_m,\in)$
as suitable interpretations of unary relation symbols, and maintain
the property of bounded elementary embedding.

To clarify this, let us see one example.
(Other examples can be obtained along the same lines).

\begin{proposition}
Let $\mathcal{L}=\{\in,\mathbf{N}_0,\mathbf{R}_0,
\mathbf{N}_1,\mathbf{R}_1,\mathbf{N}_2,\mathbf{R}_2\}$
be the first-order language of set theory augmented with
four unary relation symbols, and consider the following $\mathcal{L}$-structures:
\begin{itemize}
\item
$(\VV_3;\N,\R,\hN,\hR,{}^{**}\N,{}^{**}\R)$
where the membership relation symbol is interpreted as the true membership,
and $\mathbf{N}_0,\mathbf{R}_0,\mathbf{N}_1,\mathbf{R}_1,\mathbf{N}_2,\mathbf{R}_2$
are interpreted as $\N,\R,\hN,\hR,{}^{**}\N,{}^{**}\R$, respectively.
\item
$(\VV_4;\hN,\hR,{}^{**}\N,{}^{**}\R,{}^{***}\N,{}^{***}\R)$
where the membership relation symbol is interpreted as the true membership
and $\mathbf{N}_0,\mathbf{R}_0,\mathbf{N}_1,\mathbf{R}_1,\mathbf{N}_2,\mathbf{R}_2$
are interpred as $\hN,\hR,{}^{**}\N,{}^{**}\R,{}^{***}\N,{}^{***}\R$, respectively.
\end{itemize}

Then the restriction
$$\s\res\VV_3:(\VV_3;\N,\R,\hN,\hR,{}^{**}\N,{}^{**}\R)\prec_b
(\VV_4;\hN,\hR,{}^{**}\N,{}^{**}\R,{}^{***}\N,{}^{***}\R)$$
is a bounded elementary embedding.
Similarly, also the restriction
$$\i_1\res\VV_3:(\VV_3;\hN,\hR,{}^{**}\N,{}^{**}\R)\prec_b
(\VV_4;{}^{**}\N,{}^{**}\R,{}^{***}\N,{}^{***}\R)$$
is a bounded elementary embedding.
%
%
%
%
%
%
%
%
%
%
%
%
\end{proposition}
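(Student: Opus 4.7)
The plan is to reduce the $\mathcal{L}$-elementarity claim to the ordinary bounded elementarity of $\s$ (resp.\ of $\i_1$) by encoding each extra unary predicate as a set-theoretic parameter, then invoking absoluteness. Given a bounded $\mathcal{L}$-formula $\varphi(x_1,\ldots,x_k)$, I would first replace every atomic subformula $\mathbf{N}_i(t)$ by $t\in u_i$ and every $\mathbf{R}_i(t)$ by $t\in v_i$ for fresh variables $u_i,v_i$, producing a bounded set-theoretic formula $\psi(x_1,\ldots,x_k;\,u_0,v_0,u_1,v_1,u_2,v_2)$ in the pure language of set theory.

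The next step is to establish, for $a_1,\ldots,a_k\in\VV_3$, the absoluteness equivalence
\[
(\VV_3;\N,\R,\hN,\hR,{}^{**}\N,{}^{**}\R)\models\varphi(a_1,\ldots,a_k)
\ \Longleftrightarrow\
\VV\models\psi(a_1,\ldots,a_k;\,\N,\R,\hN,\hR,{}^{**}\N,{}^{**}\R),
\]
and the analogous one with $\VV_3$ replaced by $\VV_4$ and the parameter tuple shifted to $(\hN,\hR,{}^{**}\N,{}^{**}\R,{}^{***}\N,{}^{***}\R)$. Both are proved by induction on $\varphi$ and use only that $\VV_3$ and $\VV_4$ are transitive subsets of $\VV$ (transitivity of $\VV_1={}^*\VV(\XX)$ transfers to every $\VV_n$), so that bounded quantifiers in $\psi$ starting from parameters in $\VV_3$ (resp.\ $\VV_4$) never escape the substructure; together with the fact that, by choice of parameters, the atomic membership $t\in u_i$ in $\VV$ is literally the relational atom $\mathbf{N}_i(t)$ of the augmented structure.

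With these two equivalences in hand, the first claim follows from \emph{transfer} applied to $\psi$ in $\VV$: the embedding $\s\colon\VV\to\VV$ sends the parameter tuple to exactly the image tuple, i.e.\ $\s(\N)=\hN$, $\s(\R)=\hR$, $\s(\hN)={}^{**}\N$, $\s(\hR)={}^{**}\R$, $\s({}^{**}\N)={}^{***}\N$, $\s({}^{**}\R)={}^{***}\R$. The second claim is entirely parallel, with $\VV$ replaced by the transitive set $\VV_1$ (which contains the four parameters $\hN,\hR,{}^{**}\N,{}^{**}\R$) and ordinary transfer replaced by the Internal Transfer Principle (Theorem \ref{dualtransfer}); the needed parameter shifts $\i_1(\hN)={}^{**}\N$, $\i_1(\hR)={}^{**}\R$, $\i_1({}^{**}\N)={}^{***}\N$, $\i_1({}^{**}\R)={}^{***}\R$ are instances of $\i_1\circ\s^m=\s^{m+1}$ (Proposition \ref{compositions2} with $n=\ell=1$, whose base case $m=1$ is Proposition \ref{double}). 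Note that $\N$ and $\R$ are absent from the $\i_1$-variant because they do not lie in the domain $\VV_1$ of $\i_1$ and therefore admit no canonical internal transfer.

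The main obstacle is the absoluteness step: the sets interpreting the unary predicates ($\N$, $\hN$, etc.) are subsets of the underlying universes but not elements of them, so $\psi$ cannot literally be evaluated ``inside'' $\VV_3$ with those sets as parameters. One must phrase the equivalence in the ambient universe and use transitivity of $\VV_3$ (resp.\ $\VV_4$) to keep bounded quantification from leaving the substructure. Once this bookkeeping is settled, the remainder is a formal invocation of (internal) transfer.
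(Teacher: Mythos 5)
Your proposal is correct and follows essentially the same route as the paper: both proofs treat the unary predicates as set parameters lying in the domain of $\s$ (resp.\ of $\i_1$), note that the map sends each interpretation to the corresponding one (via $\i_1\circ\s^m=\s^{m+1}$ for the second claim), and conclude by (internal) transfer together with transitivity/absoluteness of bounded formulas. The only difference is that you spell out the predicate-to-membership translation and the absoluteness induction that the paper's proof leaves implicit in its ``it easily follows''.
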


\begin{proof}
For simplicity, let us call
$\mathfrak{V}_3=(\VV_3;\N,\R,\hN,\hR,{}^{**}\N,{}^{**}\R)$
and $\mathfrak{V}_4=(\VV_4;\hN,\hR,{}^{**}\N,{}^{**}\R,{}^{***}\N,{}^{***}\R)$.
All sets $\N,\R,\hN,\hR,{}^{**}\N,{}^{**}\R\in\VV$
belong to the domain of the star map $\s$.
Since $\s\res\VV_3:\VV_3\prec_b\VV_4$ is a bounded elementary embedding,
and $\s$ maps the interpretation of the symbols $\mathbf{N}_i$ and $\mathbf{R}_i$
in $\mathfrak{V}_3$ to the corresponding interpretations in $\mathfrak{V}_4$,
it easily follows that $\s\res\VV_3:\mathfrak{V}_3\prec_b\mathfrak{V}_4$
is in fact a bounded elementary embedding.

\smallskip
Now consider the reduced structures
$\mathfrak{V}'_3=(\VV_3;\hN,\hR,{}^{**}\N,{}^{**}\R)$
and $\mathfrak{V}'_4=(\VV_4;{}^{**}\N,{}^{**}\R,{}^{***}\N,{}^{***}\R)$
to the sublanguage
 $\mathcal{L}'=\!\{\in,\mathbf{N}_1,\mathbf{R}_1,\mathbf{N}_2,\mathbf{R}_2\}$.
Note that the sets $\hN,\hR,{}^{**}\N,{}^{**}\R\in\VV_1$
belong to the domain of $\i_1$.
Note also that $\i_1$ maps the interpretation $\s^{i}(\N)$ of the symbol
$\mathbf{N}_i$ in $\mathfrak{V}'_3$ to the corresponding
interpretation $\s^{i+1}(\N)=\i_1(\s^i(\N))$ in $\mathfrak{V}'_4$,
and similarly for the symbols $\mathbf{R}_i$.
As in the previous case, since $\i_1\res\VV_3:\VV_3\prec_b\VV_4$
is a bounded elementary embedding,
it easily follows that $\i_1\res\VV_3:\mathfrak{V}'_3\prec_b\mathfrak{V}'_4$
is in fact a bounded elementary embedding.
\end{proof}

\subsection{A normal form for the canonical embeddings}
\

\noindent
By putting together the results proved above, one obtains
the existence of a ``normal form" for all possible
canonical embeddings between the universes $\VV$ and $\VV_n$,
as given by compositions of the internal star maps with the star map.

\begin{theorem}[Normal Form]\label{normalform}
Every composition of internal star maps with the star map
equals a unique composition of the following form:
$$\s^{\ell_0}\circ(\i_{n_1})^{\ell_1}\circ\ \ldots\ \circ(\i_{n_k})^{\ell_k}$$
on the domain $\VV_{n_k}$
where $0<n_1<\ldots<n_k$ and
$\ell_i\not=0$ for $i=1,2,\ldots,k$. We allow $\ell_0=0$
and $k=0$, but not both.\footnote
{~We agree that $\s^0=\imath$ is the identity map; and we agree
that when $k=0$ the above composition equals $\s^{\ell_0}$.}
\end{theorem}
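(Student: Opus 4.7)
My plan is to establish existence and uniqueness separately, leveraging the commutation identities from Propositions \ref{compositions2}--\ref{compositions2bis2}.

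For \emph{existence}, I regard an arbitrary composition of $\s$ and the $\i_n$'s as a word in the alphabet $\{\s,\i_1,\i_2,\dots\}$, written with the convention $\s=\i_0$ as $(\i_{a_1})^{b_1}\circ\cdots\circ(\i_{a_r})^{b_r}$, and reduce it using two rewriting moves: \emph{absorption} (merging two adjacent factors into one at the smaller index, when the left index exceeds the right but the index gap does not exceed the right exponent, as in Propositions \ref{compositions2}, \ref{compositions3}), and \emph{swap} (swapping two adjacent factors and decreasing the left index by the right exponent, when the gap exceeds it, as in Propositions \ref{compositions2bis}, \ref{compositions2bis2}). A word on which neither move applies is one whose indices $a_i$ are strictly increasing from left to right, which is exactly the normal form with the $a_1=0$ factor (if any) playing the role of $\s^{\ell_0}$. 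Termination follows from the lexicographic pair $(r,w)$, where $r$ is the number of factors and $w:=\sum_i 2^{a_i}b_i$: an absorption strictly reduces $r$, while a swap preserves $r$ but strictly reduces $w$, since its exponent $\ell$ is moved from contribution $2^{b+n}\ell$ to $2^{b+n-m}\ell$ with $m\geq 1$.

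For \emph{uniqueness}, I would distinguish normal forms by their action on the test sequence $\nu_j:=\s^j(\nu)$, where $\nu\in\hN\setminus\N$ is a fixed infinite hypernatural. Each $\nu_j$ lies in $\N_{j+1}\setminus\N_j$, and one checks that $\N_{j+1}\subseteq\VV_{n_k}$ for every $j\geq 0$ (writing $\N_{j+1}=\s^{n_k}(\N_{\max(0,j+1-n_k)})$ for $n_k\leq j+1$, and using $\N_{j+1}\subseteq\N_{n_k}=\s^{n_k}(\N)$ otherwise), so $F$ can be evaluated at each $\nu_j$. Using the two key facts that $\i_n$ fixes every element of $\N_n$ and that it sends each $\xi\in\N_{n+m+1}\setminus\N_{n+m}$ into $\N_{n+m+2}\setminus\N_{n+m+1}$, a right-to-left unfolding of $F=\s^{\ell_0}\circ(\i_{n_1})^{\ell_1}\circ\cdots\circ(\i_{n_k})^{\ell_k}$ yields
\[
F(\nu_j)\in\N_{L(j)}\setminus\N_{L(j)-1},
\qquad
L(j):=j+1+\ell_0+\sum_{a\,:\,n_a\leq j}\ell_a.
\]
The integer function $L=L_F$ is determined by $F$ alone, yet $\ell_0=L(0)-1$, the jump set $\{n_1,\dots,n_k\}=\{j+1:L(j+1)-L(j)>1\}$, and the exponents $\ell_a=L(n_a)-L(n_a-1)-1$ can all be read off $L$. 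Since $n_k$ is further determined by the domain $\VV_{n_k}$ of $F$, the entire normal-form data $(\ell_0,(n_1,\ell_1),\dots,(n_k,\ell_k))$ is recovered from $F$ as a function.

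The main obstacle in \emph{existence} is the termination argument, since the most natural candidates are each invariant under one of the two moves: the total exponent $\sum b_i$ is preserved by absorption of equal indices, while the factor count $r$ is preserved by swapping; one must combine them lexicographically with index-weighted weights, as above. The main obstacle in \emph{uniqueness} is the book-keeping needed to verify the staircase formula for $L(j)$: for each range $n_i\leq j<n_{i+1}$ one must check, by induction on the right-to-left order, that the factors $(\i_{n_{i+1}})^{\ell_{i+1}},\dots,(\i_{n_k})^{\ell_k}$ fix $\nu_j$ (their indices are too large) while each remaining factor $(\i_{n_a})^{\ell_a}$ with $a\leq i$ raises the level of the running element by exactly $\ell_a$---a careful but routine application of the cited propositions on the chain $\N_0\subset\N_1\subset\cdots$.
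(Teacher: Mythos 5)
Your proposal is correct, and at bottom it uses the same ingredients as the paper: existence by rewriting a word in $\s,\i_1,\i_2,\dots$ with the commutation identities of Propositions \ref{compositions2}--\ref{compositions2bis2}, and uniqueness by evaluating on test elements and tracking in which gap $\N_m\setminus\N_{m-1}$ the image lands, using that $\i_n$ fixes $\N_n$ and raises the level by exactly one above it. The differences are in the bookkeeping, and they are worth noting. For existence, the paper inducts on the ``rank'' $\sum_i\sigma(\psi_i,\psi_{i+1})$ of a word of single maps, while you work with grouped powers and terminate via the lexicographic pair $(r,\sum_i 2^{a_i}b_i)$; your measure has the virtue of strictly decreasing under either move applied to \emph{any} reducible adjacent pair (the rank is not monotone under an arbitrary swap: e.g.\ $\i_3\circ\i_3\circ\s\mapsto\i_3\circ\s\circ\i_2$ leaves it at $3$, so the paper's induction needs the descent to be chosen with care), at the price of one small point you should make explicit: merging \emph{equal} adjacent indices (the $n=0$ case of Proposition \ref{compositions3}) must also count as a move, since a swap can create such pairs, and your phrasing ``left index exceeds the right'' technically omits it; with that convention your irreducible words do have strictly increasing indices. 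For uniqueness, the paper peels off $(n_k,\ell_k),(n_{k-1},\ell_{k-1}),\dots$ by induction from the right, choosing at each stage test elements at levels $n_{k-j}$ and $n_{k-j}+1$, whereas you evaluate once on the sequence $\nu_j=\s^j(\nu)$ and read $\ell_0$, the set $\{n_1,\dots,n_k\}$ and the exponents off the single level function $L(j)=j+1+\ell_0+\sum_{n_a\le j}\ell_a$; this is the same underlying computation, packaged more transparently (and note $n_k$ is already the largest jump of $L$, so appealing to the domain for it is redundant, though harmless). Both halves of your argument go through as written once the equal-index merge is admitted as a move.
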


\begin{proof}
For convenience, let us temporarily denote $\s=\i_0$.
For all $i,j\ge 0$ define:
$$\sigma(\i_i,\i_j)=\begin{cases}
0\ & \text{if}\ i\le j ;
\\
i-j\ & \text{if}\  i>j.
\end{cases}$$

Now let $\psi_1\circ\psi_2\circ\,\ldots\,\circ\psi_N$ be any
composition where the functions $\psi_i$ are either the star map $\s$
or one of the internal star maps $\i_n$.
Define the \emph{rank} $\rho$ of the composition by letting
$$\rho(\psi_1\circ\psi_2\circ\,\ldots\,\circ\psi_N)=\sum_{i=1}^{N-1}\sigma(\psi_i,\psi_{i+1}).$$
If $N=1$, we agree that the rank of a single function $\rho(\psi)=0$.

We proceed by induction on the rank of the finite composition.
Observe that if $\rho(\psi_1\circ\psi_2\circ\,\ldots\,\circ\psi_N)=0$,
then either we have a single function,
or $\rho(\psi_i,\psi_{i+1})=0$ for every $i=1,\ldots,N-1$.
In both cases, it is clear that one can write
$\psi_1\circ\psi_2\circ\,\ldots\,\circ\psi_N$ in the desired canonical normal form.
This proves the base case for all compositions of rank $0$.

At the inductive step when $\rho(\psi_1\circ\psi_2\circ\,\ldots\,\circ\psi_N)>0$,
pick $i$ such that $\sigma(\psi_i,\psi_{i+1})\geq 1$.
We have two possibilities: either $\psi_i=\i_k$ for some $k$ and $\psi_{i+1}=\s$,
or $\psi_i=\i_k$ and $\psi_{i+1}=\i_m$ where $k>m$.

In the former case, $\psi_i\circ\psi_{i+1}=\i_k\circ\s=\s\circ\i_{k-1}$,
by Proposition \ref{compositions2bis}.
Note that $\sigma(\s,\i_{k-1})=0$ and so,
by interchanging $\psi_i$ and $\psi_{i+1}$
we obtain an equal composition with a smaller rank, and hence
the inductive hypothesis applies. In the latter case, we have
$\psi_i\circ\psi_{i+1}=\i_k\circ\i_m=\i_m\circ\i_{k-1}$,
by Proposition \ref{compositions2bis2}, and similarly as above,
by interchanging $\psi_i$ and $\psi_{i+1}$,
we obtain an equal composition to which the inductive hypothesis applies.

Let us now prove the uniqueness of the normal form.
Suppose that a composition $\Psi$ has two normal forms
$$\Psi=\s^{\ell_0}\circ(\i_{n_1})^{\ell_1}\circ\ \ldots\ \circ(\i_{n_k})^{\ell_k}
=\s^{\ell'_0}\circ(\i_{m_1})^{\ell'_1}\circ\ \ldots\ \circ(\i_{m_{k'}})^{\ell'_{k'}}.$$
Note that the domain of $\Psi$ is $\VV_{n_k}$ as well as $\VV_{m_{k'}}$, and hence $n_k=m_{k'}$.
If $a\in\N_{n_k+1}\setminus\N_{n_k}=\N_{m_{k'}+1}\setminus\N_{m_{k'}}$,
then $\Psi(a)\in\N_{n_k+1+\ell_k+\cdots+\ell_0}\setminus\N_{n_k+\ell_k+\cdots+\ell_0}$ and
$\Psi(a)\in\N_{n_k+1+\ell'_{k'}+\cdots+\ell'_0}
\setminus\N_{n_k+\ell'_{k'}+\cdots+\ell'_0}$; therefore $L=\ell_0+\cdots+\ell_k=\ell'_0+
\cdots+\ell'_{k'}$.

For every $a\in\N_{n_k}\setminus\N_{n_k-1}=\N_{m_{k'}}\setminus
\N_{m_{k'}-1}$ one has $\i_{n_k}(a)=a$, and so
\begin{multline*}
\Psi(a)=\s^{\ell'_0}\circ(\i_{m_1})^{\ell'_1}\circ\
\ldots\ \circ(\i_{m_{k'-1}})^{\ell'_{k'-1}}(a)\\
\in\N_{m_{k'}+L-\ell'_{k'}}\setminus\N_{m_{k'}-1+L-\ell'_{k'}}=
\N_{n_k+L-\ell'_{k'}}\setminus\N_{n_k-1+L-\ell'_{k'}}\,\mbox{ as well as}
\end{multline*}
$$\Psi(a)=\s^{\ell_0}\circ(\i_{n_1})^{\ell_1}\circ\ \ldots\
\circ(\i_{n_{k-1}})^{\ell_{k-1}}(a)
\in\N_{n_k+L-\ell_{k}}\setminus\N_{n_k-1+L-\ell_{k}}.$$
It follows that $\ell_k=\ell'_{k'}$.

By induction, suppose we have proved that
$n_{k-i}=m_{k'-i}$ and $\ell_{k-i}=\ell'_{k'-i}$ for $i=0,1,\ldots,j-1$ for some $j\leq k$.
If $n_{k-j}<m_{k'-j}$, pick an element $a\in\N_{n_{k-j}+1}\setminus\N_{n_{k-j}}$. Then
$\i_{n_{k-j}}(a)\in \N_{n_{k-j}+2}\setminus\N_{n_{k-j}-1}$ and
$\i_{m_{k'-j}}(a)=a$, and so
$$\Psi(a)\in\N_{n_{k-j}+1+L-\ell_{k-j+1}-\,\ldots\,-\ell_{k}}\setminus
\N_{n_{k-j}+L-\ell_{k-j+1}-\,\ldots\,-\ell_k}\,\mbox{ as well as}$$
$$\Psi(a)\in\N_{n_{k-j}+1+L-\ell'_{k'-j}-\ell'_{k'-j+1}-\,\ldots\,-\ell'_{k'}}\setminus
\N_{n_{k-j}+L-\ell'_{k'-j}-\ell'_{k'-j+1}-\,\ldots\,-\ell'_{k'}}.$$
This implies that $\ell'_{k'-j}=0$, contradicting the assumption. By symmetry,
one can also show that $n_{k-j}>m_{k'-j}$ is impossible, and hence $n_{k-j}=m_{k'-j}$.
By picking an element $a\in\N_{n_{k-j}}\setminus\N_{n_{k-j}-1}$, it can be shown by
the same argument as above that $\ell_{k-j}=\ell'_{k'-j}$.
This completes the proof of uniqueness.
\end{proof}

\section{Various bounded elementary embeddings}\label{sec-bee}

In this section we will study various canonical
ways of obtaining bounded elementary embeddings
between internal universes. To this end, a useful
notion will be that of transitive closure.
Recall the following definitions.

\begin{definition}
A set $T$ is \emph{transitive} if elements of elements of $T$ are elements of $T$:
$$\forall t\in T\ \forall x\in t\ \ x\in T.$$
The \emph{transitive closure} $\text{TC}(A)$ of a set $A$ is the smallest
transitive set that includes $A$.
\end{definition}

It is easily seen that the transitive closure $\text{TC}(A)=\bigcup_{n\ge 0}A_n$ where
$A_0=A$ and, inductively, $A_{n+1}=\bigcup\{a\mid a\in A_n\}$.

A fundamental property that we will use in the sequel is the fact that
bounded quantifier formulas are absolute between transitive sets
(see, \emph{e.g.} \cite[Ch.\,12]{je}).

\begin{itemize}
\item
Let $T\subset T'$ be transitive sets. Then for every
bounded quantifier formula $\varphi(x_1,\ldots,x_n)$ and
for all $t_1,\ldots,t_n\in T$, one has:
$$T\models \varphi(t_1,\ldots,t_n)\ \Longleftrightarrow\
T'\models\varphi(t_1,\ldots,t_n).$$
\end{itemize}

Recall that our universe is given by the superstructure $\VV=\bigcup_{k\ge 0}V_k(\XX)$,
which is a transitive set. Note that every level $V_k(\XX)\in\VV$.

\smallskip
The following properties about internal universes hold.

\begin{proposition}
Let $n\in\N$. Then:

\begin{enumerate}
\item
$\VV_n=\bigcup_{k\ge 0}\s^n(V_k(\XX))$.
\item
$\VV_n$ is the transitive closure of $\text{range}(\s^n)$.
\item
$\VV_{n+1}\subset\VV_n$, and hence the internal universes form a (strictly) decreasing chain:
$\VV\supset\VV_1\supset\VV_2\supset\ \cdots\ \supset\VV_n\supset\VV_{n+1}\supset\ \cdots$.
\end{enumerate}
\end{proposition}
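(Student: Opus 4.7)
The plan is to observe that items (1) and (3) are literally Proposition \ref{structure1}(2) and \ref{structure1}(3), so I would simply cite them; the only genuinely new content is item (2), the characterization of $\VV_n$ as the transitive closure of $\mathrm{range}(\s^n)$. For this I would verify three things in order: $\mathrm{range}(\s^n)\subseteq\VV_n$, transitivity of $\VV_n$, and minimality among transitive supersets of $\mathrm{range}(\s^n)$.

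For the first, by the very definition $\VV_n=\bigcup_{X\in\VV}\s^n(X)$, every $\s^n(X)$ lies in $\VV_n$, so $\mathrm{range}(\s^n)\subseteq\VV_n$. For transitivity, let $t\in\VV_n$ and pick $X\in\VV$ with $t\in\s^n(X)$; choose $k$ with $X\subseteq V_k(\XX)$, so $t\in\s^n(V_k(\XX))$. The superstructure's own transitivity (recorded in the footnote of the excerpt) says precisely that the bounded quantifier sentence ``$\forall u\in V_k(\XX)\ \forall y\in u\ (y\in V_{k-1}(\XX))$'' holds. Applying the transfer principle for the iterated star map $\s^n$ (legitimate since this is a bounded formula and $V_k(\XX),V_{k-1}(\XX)\in\VV$) gives ``$\forall u\in\s^n(V_k(\XX))\ \forall y\in u\ (y\in\s^n(V_{k-1}(\XX)))$''. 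Thus every $y\in t$ belongs to $\s^n(V_{k-1}(\XX))\subseteq\VV_n$, proving $\VV_n$ is transitive.

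For minimality, let $T$ be any transitive set with $\mathrm{range}(\s^n)\subseteq T$. For each $X\in\VV$, the set $\s^n(X)$ is an element of $T$; transitivity of $T$ then yields $\s^n(X)\subseteq T$. Taking the union over $X\in\VV$ and invoking item (1), we obtain $\VV_n=\bigcup_{X\in\VV}\s^n(X)\subseteq T$. Combined with the first two steps, this identifies $\VV_n$ with the smallest transitive set containing $\mathrm{range}(\s^n)$, i.e.\ with $\mathrm{TC}(\mathrm{range}(\s^n))$.

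The only technical subtlety I anticipate lies in the transitivity step, where one must be mindful that the base level $V_0(\XX)=\XX$ consists of atoms: strictly, the quantifier ``$\forall y\in u$'' is vacuous when $u$ is an atom, and the formulation ``$y\in V_{k-1}(\XX)$'' should be read (for $k=0$) as a convention making the base case trivial. This is the same mild issue already handled implicitly in the paper's proof that $\VV(\XX)$ itself is transitive, so transferring it via $\s^n$ causes no additional difficulty. Everything else is formal bookkeeping.
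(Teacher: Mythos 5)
Your handling of items (1) and (3) by citing Proposition~\ref{structure1} is legitimate (they are verbatim repetitions of parts (2) and (3) of that proposition), and your argument for item (2) has essentially the paper's shape: the paper also proves transitivity of $\VV_n$ by transferring the sentence ``$\forall x\in V_k(\XX)\ \forall y\in x\ (y\in V_{k-1}(\XX))$'' along $\s^n$, and where you quantify over an arbitrary transitive $T\supseteq\text{range}(\s^n)$ to get minimality, the paper equivalently notes directly that every element of $\VV_n$ is an element of an element of $\text{range}(\s^n)$, hence lies in $\text{TC}(\text{range}(\s^n))$. Your caveat about atoms at the base level is fine and matches what the paper leaves implicit.

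There is, however, one genuine gap, in your very first step. From the definition $\VV_n=\bigcup_{X\in\VV}\s^n(X)$ you only get that each $\s^n(X)$ is a \emph{subset} of $\VV_n$; it does not follow ``by the very definition'' that $\s^n(X)$ is an \emph{element} of $\VV_n$, which is what the inclusion $\text{range}(\s^n)\subseteq\VV_n$ asserts and what you need to conclude $\text{TC}(\text{range}(\s^n))\subseteq\VV_n$ from transitivity of $\VV_n$. Since the whole content of item (2) is precisely the distinction between $\in$ and $\subseteq$, this cannot be waved through. The repair is one line and is exactly what the paper does: every $a\in\VV$ is an element (not merely a subset) of some level, $a\in V_k(\XX)$, and since membership is preserved by the star map (transfer of the atomic formula $x\in y$), $\s^n(a)\in\s^n(V_k(\XX))\subseteq\VV_n$, the last inclusion because $V_k(\XX)\in\VV$. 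With that inserted, your argument is complete; note also that your minimality step already presupposes this corrected reading, since it starts from ``$\s^n(X)$ is an element of $T$'' for every transitive $T$ containing $\text{range}(\s^n)$.
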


\begin{proof}
(1).\ One inclusion is trivial because every
$V_k(\XX)\in\VV$.
For the converse inclusion $\bigcup_{X\in\VV}\s^n(X)\subseteq\bigcup_{k\ge 0}\s^n(V_k(\XX))$,
observe that for every $X\in\VV$ we can pick $k$ such that $X\subseteq V_k(\XX)$,
and hence $\s^n(X)\subseteq\s^n(V_k(\XX))$.

\smallskip
(2).\ For every $k$, the property:
``$\forall x\in V_k(\XX)\ \forall y\in x\ y\in V_{k-1}(\XX)$" holds in $\VV$.
By applying \emph{transfer} with respect to $\s^n$,
and by taking (1) into account, we obtain that $\VV_n$ is transitive.
Now observe that, by definition, $\VV_n$ contains elements of elements of
the range of $\s^n$,
and hence it is included in the transitive closure of $\text{range}(\s^n)$.
So, we are left to show that $\text{range}(\s^n)\subseteq\VV_n$.
To see this, note that every $X\in\VV$ belongs to $V_k(\XX)$ for some $k$,
and hence $\s^n(X)\in\s^n(V_k(\XX))\subseteq\VV_n$.

\smallskip
(3).\ For every $X\in\VV$, we have that $\hX\subseteq V_k(\XX)$ for some $k$
and so, by \emph{transfer} with respect to $\s^n$, we
obtain that $\s^{n+1}(X)\subseteq\s^n(V_k(\XX))\subset\VV_n$.
The inclusion $\VV_{n+1}\subset\VV_n$  is proper because, \emph{e.g.},
$\N\in V_1(\XX)$ and $\N\notin{}^*V_k(\XX)$ for every $k$ implies that
$\s^n(\N)\in\s^n(V_1(\XX))\subset\VV_n$ but $\s^n(\N)\notin\bigcup_{k\ge 0}\s^{n+1}(V_k(\XX))$.
\end{proof}

\begin{proposition}
For every $n$, $\VV_{n+1}$ is the transitive closure of $\text{range}(\i_n)$.
\end{proposition}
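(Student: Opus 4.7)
The plan is to mimic the proof given just above for the analogous fact that $\VV_n$ is the transitive closure of $\text{range}(\s^n)$, taking advantage of the relations between $\i_n$ and the iterated star maps that were established in \S\ref{sec-internalstarmaps}. Two things must be checked: that $\text{range}(\i_n)\subseteq\VV_{n+1}$ with $\VV_{n+1}$ transitive (which gives one inclusion), and that every element of $\VV_{n+1}$ is an element of some element of $\text{range}(\i_n)$ (which gives the other).

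For the first inclusion, note that $\text{range}(\i_n)\subseteq\VV_{n+1}$ has already been recorded right after Definition of the $n$-th internal star map, and $\VV_{n+1}$ is transitive by the previous proposition. Since the transitive closure of a set is the smallest transitive set containing it, this yields $\text{TC}(\text{range}(\i_n))\subseteq\VV_{n+1}$.

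For the reverse inclusion, take any $a\in\VV_{n+1}$ and pick $X\in\VV$ with $a\in\s^{n+1}(X)$. The key observation is that by Proposition \ref{compositions2} (applied with $m=n$ and $\ell=1$) one has $\i_n\circ\s^n=\s^{n+1}$, so
\[
\s^{n+1}(X)=\i_n(\s^n(X))\in\text{range}(\i_n).
\]
Thus $a$ is an element of an element of $\text{range}(\i_n)$, and hence $a\in\text{TC}(\text{range}(\i_n))$. This proves $\VV_{n+1}\subseteq\text{TC}(\text{range}(\i_n))$ and completes the proof.

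There is no real obstacle here; the only thing to keep in mind is that the identity $\i_n\circ\s^n=\s^{n+1}$ is precisely what is needed to witness that the set $\s^{n+1}(X)$, a typical building block of $\VV_{n+1}$, already lies in $\text{range}(\i_n)$ and not merely in its transitive closure.
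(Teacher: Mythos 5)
Your proof is correct and follows essentially the same route as the paper's: one inclusion from $\text{range}(\i_n)\subseteq\VV_{n+1}$ together with the transitivity of $\VV_{n+1}$, and the other from the identity $\i_n\circ\s^n=\s^{n+1}$ of Proposition \ref{compositions2}, which exhibits each $\s^{n+1}(X)$ as an element of $\text{range}(\i_n)$. If anything, your write-up is slightly more precise in the last step, where the paper's phrasing ``$\VV_{n+1}\subseteq\text{range}(\i_n)$'' really means containment in the transitive closure, exactly as you spell out.
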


\begin{proof}
As checked right after the definition of $\i_n$, we know that $\text{range}(\i_n)\subseteq\VV_{n+1}$;
besides, we have already seen that every universe $\VV_{n+1}$ is transitive,
and hence the transitive closure of $\text{range}(\i_n)$ is included in $\VV_{n+1}$.
On the other hand, by Proposition \ref{compositions2} we have that
$\s^{n+1}(X)=\i_n(\s^n(X))\in\text{range}(\i_n)$
for every $X\in\VV$, and so we also have the converse inclusion
$\VV_{n+1}\subseteq\text{range}(\i_n)$.
\end{proof}

\smallskip
Let us now consider restrictions of iterated star maps and
iterated internal star maps.

\begin{proposition}\label{rangeofrestrictions}
Let $m,n,\ell\in\N$. Then:
\begin{enumerate}
\item
The transitive closure of $\text{range}(\s^m\res\VV_\ell)$ is $\VV_{m+\ell}$.
\item
The transitive closure of $\text{range}((\i_n)^m\res \VV_{n+\ell})$ is $\VV_{n+m+\ell}$.
\end{enumerate}
\end{proposition}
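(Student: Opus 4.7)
The plan is to prove both inclusions by the same scheme in each part, using the description $\VV_k=\bigcup_{j\ge 0}\s^k(V_j(\XX))$ and the transitivity of every $\VV_k$ (just re-established in the preceding proposition). For each part one inclusion will follow from transitivity of the target universe and the fact that the map lands inside it; the reverse will follow from exhibiting, for every $j$, a single point of the domain whose image is the ``generator'' $\s^{k}(V_j(\XX))$ of the target $\VV_k$.

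For (1), I would first verify $\text{range}(\s^m\res\VV_\ell)\subseteq\VV_{m+\ell}$. Any $a\in\VV_\ell$ lies in $\s^\ell(V_j(\XX))$ for some $j$, hence $\s^m(a)\in\s^m(\s^\ell(V_j(\XX)))=\s^{m+\ell}(V_j(\XX))\subseteq\VV_{m+\ell}$; since $\VV_{m+\ell}$ is transitive, this gives $\text{TC}(\text{range}(\s^m\res\VV_\ell))\subseteq\VV_{m+\ell}$. For the reverse inclusion, I would note that $V_j(\XX)\in V_{j+1}(\XX)$ forces $\s^\ell(V_j(\XX))\in\s^\ell(V_{j+1}(\XX))\subseteq\VV_\ell$, so each $\s^\ell(V_j(\XX))$ is in the domain of $\s^m\res\VV_\ell$ and maps to $\s^{m+\ell}(V_j(\XX))\in\text{range}(\s^m\res\VV_\ell)$. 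Since $\VV_{m+\ell}=\bigcup_j\s^{m+\ell}(V_j(\XX))$, every element of $\VV_{m+\ell}$ is an element of some such image, hence lies in the transitive closure of the range.

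For (2), the argument is parallel, with the identity
\[
(\i_n)^m\circ\s^{n+\ell}=\s^{n+m+\ell}
\]
playing the role previously taken by the raw iterated star map. This identity is a direct instance of Proposition \ref{compositions2} (since $n\le n+\ell$). Given $a\in\s^{n+\ell}(V_j(\XX))\subseteq\VV_{n+\ell}$, the description $(\i_n)^m=\bigcup_{X}\s^n(\s^m\res X)$ with $X=\s^\ell(V_j(\XX))\in\VV$ yields $(\i_n)^m(a)\in\s^n(\s^{m+\ell}(V_j(\XX)))=\s^{n+m+\ell}(V_j(\XX))\subseteq\VV_{n+m+\ell}$; transitivity of $\VV_{n+m+\ell}$ gives the containment of the transitive closure. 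For the reverse, $\s^{n+\ell}(V_j(\XX))\in\VV_{n+\ell}$ as in part (1), and applying $(\i_n)^m$ to it produces exactly $\s^{n+m+\ell}(V_j(\XX))$ by the identity above; these generators cover $\VV_{n+m+\ell}$, completing the inclusion.

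The whole argument is essentially index-bookkeeping; the only point requiring a small observation is that the generator $\s^{n+\ell}(V_j(\XX))$ of $\VV_{n+m+\ell}$ (via its image) actually belongs to the domain $\VV_{n+\ell}$ of the restricted map, which is ensured by embedding $V_j(\XX)\in V_{j+1}(\XX)$ before applying the iterated star map. Once the identity from Proposition \ref{compositions2} is invoked, no further subtlety remains.
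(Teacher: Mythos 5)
Your proof is correct and follows essentially the same route as the paper: bound the range inside $\VV_{m+\ell}$ (resp.\ $\VV_{n+m+\ell}$) and use its transitivity for one inclusion, then obtain the other by noting that the generators $\s^{m+\ell}(V_j(\XX))$ (resp.\ $\s^{n+m+\ell}(V_j(\XX))=(\i_n)^m(\s^{n+\ell}(V_j(\XX)))$, via the identity $(\i_n)^m\circ\s^{n+\ell}=\s^{n+m+\ell}$ from Proposition \ref{compositions2}) lie in the range of the restricted map. Your explicit check that $\s^{\ell}(V_j(\XX))$ and $\s^{n+\ell}(V_j(\XX))$ belong to the restricted domains, via $V_j(\XX)\in V_{j+1}(\XX)$, is a minor extra precision over the paper's wording but not a different argument.
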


%


\begin{proof}
(1).\ For every $X\in\VV$, one has that
$\s^{m+\ell}(X)=\s^m(\s^\ell(X))$\newline
$\in\text{range}(\s^m)$,
and so $\VV_{m+\ell}=\bigcup_{X\in\VV}\s^{m+\ell}(X)\subseteq\text{TC}(\text{range}(\s^m))$.
Conversely, if $a\in\s^\ell(X)$, then $\s^m(a)\in\s^{m+\ell}(X)$, and so
$\text{range}(\s^m\res\VV_\ell)\subseteq\VV_{m+\ell}$.
Since $\VV_{m+\ell}$ is transitive, we conclude that
$\text{TC}(\text{range}(\s^m\res\VV_\ell))=\VV_{m+\ell}$.

\smallskip
(2).\ Let $X\in\VV$ be any set.
If $Y=\s^\ell(X)$ then $\s^m\res Y:\s^\ell(X)\to\s^{m+\ell}(X)$,
and $\s^n(\s^m\res Y):\s^{n+\ell}(X)\to\s^{n+m+\ell}(X)$.
So, if $a\in\s^{n+\ell}(X)$ then
$(\i_n)^m(a)=\s^n(\s^m\res Y)(a)\in\s^{n+m+\ell}(X)\subseteq\VV_{n+m+\ell}$.
This shows that $\text{range}((\i_n)^m\res \VV_{n+\ell})\subseteq\VV_{n+m+\ell}$.
Since $\VV_{n+m+\ell}$ is transitive, we are left to show
that $\VV_{n+m+\ell}\subseteq\text{TC}(\text{range}((\i_n)^m\res \VV_{n+\ell}))$.
To see this, recall that $(\i_n)^m\circ\s^{n+\ell}=\s^{n+m+\ell}$;
then for every $X\in\VV$ we have that
$\s^{n+m+\ell}(X)=((\i_n)^m\circ\s^{n+\ell})(X)=(\i_n)^m(\s^{n+\ell}(X))\in\text{range}((\i_n)^m)$,
and the desired inclusion follows.
\end{proof}

\subsection{Bounded elementary chains}\label{subsec-bee}
\

\noindent
The \emph{transfer principle} for the star map states that $\s:\VV\to\VV$
is a \emph{bounded elementary embedding}, \emph{i.e.}
it satisfies the property of an elementary embedding
restricted to bounded quantifier formulas.
Following \cite[\S 4.4]{ck}, in the sequel we will use the symbol $\prec_b$
for bounded elementary extensions. So, we write:
$$\s:\VV\prec_b\VV$$

Now recall the basic fact that bounded quantifier formulas are absolute
between transitive models (see, \emph{e.g.}, \cite[Ch. 12]{je}).
This means that if $T\subset T'$ are transitive,
then for every bounded quantifier formula $\varphi(x_1,\ldots,x_n)$
and for all $t_1,\ldots,t_n\in T$, one has the equivalence:
$$T\models\varphi(t_1,\ldots,t_n)\Leftrightarrow T'\models\varphi(t_1,\ldots,t_n).$$

Since all universes $\VV$ and $\VV_n$ are transitive,
one obtains in a natural way several examples of bounded elementary embeddings between them.

To begin with, one has the following reversed chain of bounded elementary extensions,
where the underlying maps are the inclusions $\imath$:
$$\VV\stackrel{\imath}{\succ_b}\VV_1\stackrel{\imath}{\succ_b}
\VV_2\stackrel{\imath}{\succ_b}\ \ldots\ \stackrel{\imath}{\succ_b}
\VV_n\stackrel{\imath}{\succ_b}\VV_{n+1}\stackrel{\imath}{\succ_b}\ \ldots$$


\smallskip
Since the range of the star map is included in the transitive universe
$\VV_1=\bigcup_{X\in\VV}\hX$ (which is indeed its transitive closure),
it follows that $\s$ is still
a bounded elementary embedding when viewed as taking values in $\VV_1$:
$$\s:\VV\prec_b\VV_1.$$

For every $\ell$, we have also seen that the transitive closure of
$\text{range}(\s^m\res\VV_\ell)$ is $\VV_{m+\ell}$. So, the restriction
$\s^m\res\VV_\ell:\VV_\ell\prec_b\VV_{m+\ell}$, and we have the following
chain of bounded elementary extensions where all the underlying
maps are (restrictions of) the star map:
$$\VV\stackrel{*}{\prec_b}\VV_1\stackrel{*}{\prec_b}
\VV_2\stackrel{*}{\prec_b}\ \ldots\ \stackrel{*}{\prec_b}
\VV_n\stackrel{*}{\prec_b}\VV_{n+1}\stackrel{*}{\prec_b}\ \ldots$$

As we have seen, the \emph{transfer principle} also holds for all
$n$-th internal star maps $\i_n$ (Theorem \ref{n-dualtransfer}),
\emph{i.e.} $\i_n:\VV_n\prec_b\VV$.
Since the range of the $n$-th internal star map $\i_n$ is included
in the transitive universe $\VV_{n+1}$ (which is indeed its transitive closure),
similarly as observed above for the star map, we have the following chain
of bounded elementary embeddings, where the underlying maps are the internal star maps:
$$\VV_1\stackrel{\i_1}{\prec_b}\VV_2\stackrel{\i_2}{\prec_b}
\VV_3\stackrel{\i_3}{\prec_b}\ \ldots\ \prec_b
\VV_n\stackrel{\i_n}{\prec_b}\VV_{n+1}\stackrel{\i_{n+1}}{\prec_b}\VV_{n+2}\prec_b \ldots$$

We remark that plenty of other examples can be obtained, starting from the
observation that for all $n\le m$, one has that
$\VV_m\subseteq\VV_n$,
and both $\text{range}(\s\res\VV_m)$ and
$\text{range}(\i_n\res\VV_m)$ are included in $\VV_{m+1}$.
For instance, for all non-negative integers
$\ell,n_0,n_1,\ldots,n_k$
where $n_i\le\ell+i$, one has the following chain of bounded elementary embeddings:
$$\VV_\ell\stackrel{\i_{n_0}}{\prec_b}\VV_{\ell+1}\stackrel{\i_{n_1}}{\prec_b}
\VV_{\ell+2}\stackrel{\i_{n_2}}{\prec_b}\ \ldots\ \prec_b
\VV_{\ell+k}\stackrel{\i_{n_k}}{\prec_b}\VV_{\ell+k+1},$$
where we agree that $\VV_0=\VV$ and that $\i_0=\s$.

Then
for every $\ell$ and for every increasing sequence $(n_k)$
where $\ell\ge n_1$ and $n_i\leq\ell+i$,
we have the following chain of bounded elementary embeddings:
$$\VV_\ell\stackrel{\i_{n_1}}{\prec_b}\VV_{\ell+1}\stackrel{\i_{n_2}}{\prec_b}
\VV_{\ell+2}\stackrel{\i_{n_3}}{\prec_b}\ \ldots\ \prec_b
\VV_{\ell+k}\stackrel{\i_{n_k}}{\prec_b}\VV_{\ell+k+1}\stackrel{\i_{n_k+1}}{\prec_b}\ \ldots$$



\subsection{First examples of canonical embeddings}

\
%
%


\noindent
There are two different canonical ways of embedding the first internal
universe $\VV_1$ into the second internal universe $\VV_2$, namely:
\begin{itemize}
\item
$\s\res\VV_1:\VV_1\prec_b\VV_2$;
\item
$\i_1:\VV_1\prec_b\VV_2$.
\end{itemize}


There are three different canonical ways of embedding the
second internal universe $\VV_2$ into the third internal universe $\VV_3$, namely:

\begin{itemize}
\item
$\s\res\VV_2:\VV_2\prec_b\VV_3$;
\item
$\i_1\res\VV_2:\VV_2\prec_b\VV_3$;
\item
$\i_2:\VV_2\prec_b\VV_3$.
\end{itemize}

In general, for every $n$, one has the following $(n+1)$-many
pairwise different canonical
embedding from the $n$-th internal universe $\VV_n$ into
the $(n+1)$-th internal universe $\VV_{n+1}$, namely:

\begin{itemize}
\item
$\s\res\VV_n:\VV_n\prec_b\VV_{n+1}$;
\item
$\i_1\res\VV_n:\VV_n\prec_b\VV_{n+1}$;
\item
\ldots
\item
$\i_{n-1}\res\VV_n:\VV_n\prec_b\VV_{n+1}$;
\item
$\i_n:\VV_n\prec_b\VV_{n+1}$.
\end{itemize}

Let us now turn to the more general case of maps from $\VV_n$ to $\VV_m$ where $n<m$,
which are obtained by composing the embeddings considered above.

We have just seen the $m=n+1$, so let us turn to the case when $m\ge n+2$.

As an example, let us consider $n=1$ and $m=3$.
As seen above, we have two maps from $\VV_1$ to $\VV_2$, namely
$(\s\res\VV_1)$ and $\i_1$, and three maps from $\VV_2$ to $\VV_3$, namely
$(\s\res\VV_2)$, $(\i_1\res\VV_2)$, and $\i_2$.\footnote
{~For simplicity, in the following we will not specify to what universes
the various maps are restricted, as this will be clear from the context.}

$$\begin{tikzcd}
& \VV_2 \arrow{dr}{\s,\,\i_1,\,\i_2} \\
\VV_1 \arrow{ur}{\s,\,\i_1} \arrow{rr}{} && \VV_3
\end{tikzcd}$$

So, we obtain six possible compositions giving rise to maps from
$\VV_1$ into $\VV_3$, namely:
$$(1)\ \s\circ\s\ ;\ (2)\ \i_1\circ\s\ ;\ (3)\ \i_2\circ\s\ ;\ (4)\ \s\circ\i_1\ ;\ (5)\ \i_1\circ\i_1\ ;\ (6)\ \i_2\circ\i_1\ .$$

However, not all the above compositions are different to each other.
Indeed, we have that $(1)=(2)=**$, $(3)=(4)=*\circledast$, and $(5)=(6)=\circledast\circledast$
(see Propositions \ref{double}, \ref{compositions2}, and \ref{compositions3}, respectively).

We remark that the three embeddings $**$, $*\circledast$, and $\circledast\circledast$
are different to each other, as a particular case of the Normal Form Theorem \ref{normalform}.

\section{Combinatorics with internal and external star maps}\label{sec-combinatorics}

In this section we present a few basic results that connect
the star map and the internal star maps with combinatorial properties.\footnote{
~Other results along this line can be found in \cite{dn3}.}

\begin{theorem}\label{Ramsey1}
Let $I$ be an infinite set, let
$\alpha,\beta\in{}^*\!I\setminus\{{}^*i\mid i\in I\}$ and let $X\subseteq I\times I$.
Then the following are true.
\begin{enumerate}
\item
If $({}^\circledast\alpha,{}^*\beta)\in{}^{**}\!X$ then
there exist 1-1 sequences $(a_n)_{n\in\N}$
and $(b_m)_{m\in\N}$ such that $(a_n,b_m)\in X$ for all $n\le m$.
\item
If $({}^\circledast\alpha,{}^*\alpha)\in{}^{**}\!X$ then
there exist a 1-1 sequence $(h_n)_{n\in\N}$
such that $(h_n,h_m)\in X$ for all $n<m$.
\end{enumerate}
\end{theorem}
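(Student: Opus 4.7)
The plan is to reduce the hypothesis $({}^\circledast\alpha, {}^*\beta) \in {}^{**}X$ to a standard statement of the form $\alpha \in {}^*C$ for a suitable $C \subseteq I$, and then to extract the desired sequences by a diagonal induction that exploits the genericity assumption on $\alpha$ (and $\beta$).

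For part (1), set $X_a := \{b \in I : (a,b) \in X\}$ and $C := \{a \in I : \beta \in {}^*X_a\}$. Although the definition of $C$ uses the external parameter $\beta$, we have $C \subseteq I$ and hence $C \in \VV$. The crucial step is the equivalence $({}^\circledast\alpha, {}^*\beta) \in {}^{**}X$ if and only if $\alpha \in {}^*C$. I would verify this in the ultrapower model of Theorem~\ref{ultrapowermodel}: writing $\alpha = [f]_\U$ and $\beta = [g]_\U$ for functions $f, g : \kappa \to I$, the identification of the double ultrapower with the tensor-product ultrapower $\VV(\XX)_b^{\kappa\times\kappa}/\U\otimes\U$ from the final remark of Section~\ref{sec-bistarmap} makes both conditions unfold to the same tensor statement $\{j \in \kappa : \{i \in \kappa : (f(j), g(i)) \in X\} \in \U\} \in \U$.

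Given the reduction, I would construct the sequences as follows. Since $\alpha \in {}^*C$ and $\alpha \notin \{{}^*i : i \in I\}$, for every finite $F \subseteq I$ we have $\alpha \in {}^*C \setminus \{{}^*i : i \in F\} = {}^*(C \setminus F)$, forcing $C \setminus F \neq \emptyset$; hence I can inductively pick pairwise distinct $a_0, a_1, \ldots \in C$. For each $n$, $a_n \in C$ gives $\beta \in {}^*X_{a_n}$, so for every $m$ one has $\beta \in {}^*\bigl(\bigcap_{n \leq m} X_{a_n}\bigr)$; the same avoiding argument applied to $\beta$ then makes $\bigcap_{n \leq m} X_{a_n} \setminus \{b_0, \ldots, b_{m-1}\}$ non-empty, and I inductively pick $b_m$ there. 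By construction $(a_n, b_m) \in X$ for all $n \leq m$.

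For part (2), the analogous reduction with $\beta = \alpha$ yields $\alpha \in {}^*C$ where now $C = \{a \in I : \alpha \in {}^*X_a\}$, and I would then build a single sequence $(h_n)$ by picking $h_0 \in C$ and inductively $h_{n+1} \in C \cap X_{h_0} \cap \cdots \cap X_{h_n} \setminus \{h_0, \ldots, h_n\}$. Non-emptiness holds because $\alpha$ lies in the hyper-extension of this intersection: $\alpha \in {}^*C$ by assumption, $\alpha \in {}^*X_{h_i}$ for each $i \leq n$ because $h_i \in C$, and $\alpha \notin \{{}^*h_0, \ldots, {}^*h_n\}$ by the genericity hypothesis. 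The resulting sequence satisfies $(h_n, h_m) \in X$ for all $n < m$. The main obstacle is precisely the reduction step, i.e.\ establishing that the asymmetric double-star condition corresponds to the standard membership statement $\alpha \in {}^*C$; this is where the ``internal-external'' structure of iterated star maps developed in Sections~\ref{sec-bistarmap}--\ref{sec-internalstarmaps} becomes essential, by way of the tensor-product identification of the double ultrapower. Once this is in place, the rest is a routine diagonal induction from the assumption $\alpha, \beta \notin \{{}^*i : i \in I\}$.
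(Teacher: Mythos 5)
Your proposal is correct, and its combinatorial skeleton coincides with the paper's: both proofs reduce the hypothesis to $\alpha\in{}^*C$ where $C=\{a\in I\mid \beta\in{}^*X_a\}$ (the paper's $\Lambda$, resp.\ $\Gamma$ in part (2)), and then run the same diagonal induction, using $\alpha,\beta\notin\{{}^*i\mid i\in I\}$ to keep the relevant sets infinite; your choice to fix all the $a_n$'s first and then pick the $b_m$'s, rather than interleaving as the paper does, is immaterial. Where you genuinely diverge is in how the reduction is justified. The paper gets it in one line by \emph{transfer}: since the star map goes from $\VV(\XX)$ into itself, the parameters $\beta$, ${}^*X$ and $\s\res I$ are themselves elements of $\VV(\XX)$, so transferring the defining formula of $C$ gives directly that $\alpha\in{}^*C$ if and only if $({}^*(\s\res I)(\alpha),{}^*\beta)=({}^\circledast\alpha,{}^*\beta)\in{}^{**}X$; no representation of the model is needed, and this is precisely the feature of the universe-into-itself framework the theorem is meant to showcase. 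You instead verify the equivalence by unwinding both sides in the bounded ultrapower via the identifications $\circledast\leftrightarrow\widehat{d}$, $*\leftrightarrow\dd$ and the tensor-product description of the double ultrapower; the two applications of \L o\'s do land on the same statement $\{j\mid\{i\mid(f(j),g(i))\in X\}\in\U\}\in\U$, so the argument is sound, and it is legitimate under the paper's standing assumption that the model is the one constructed in Theorem \ref{ultrapowermodel}. The trade-off is that your route is model-dependent and in effect re-proves, in this special case, what the (internal) transfer principle already gives abstractly, whereas the paper's argument works verbatim in any model of the framework; on the other hand, your computation makes the ultrafilter content of the hypothesis (a tensor-product condition on $\U\otimes\U$) completely explicit, which is a useful perspective the paper only touches in a remark.
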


\begin{proof}
Note that if $A=\{i_1,\ldots,i_k\}\subset I$ is finite,
then $\alpha\notin\hA$, since $\hA=\{{}^*i_1,\ldots,{}^*i_k\}$.
So, our hypothesis $\alpha\in{}^*\!I\setminus\{{}^*i\mid i\in I\}$
guarantees the following: ``If $\alpha\in\hA$ for a subset $A\subseteq I$ then $A$ is infinite".
Of course, the same properties also hold for $\beta$.

\smallskip
$(1)$. By the hypothesis we know that $({}^*(\s\res I)(\alpha),{}^*\beta)\in{}^{**}\!X$, and so
$\alpha\in{}^*\Lambda$ where $\Lambda=\{a\in I\mid ((\s\res I)(a),\beta)\in\hX\}$.
Pick $a_1\in\Lambda$; then $(\s\res I(a_1),\beta)=({}^*a_1,\beta)\in\hX$,
and hence $\beta\in{}^*(X_{a_1})$,
where $X_{a_1}=\{b\in I\mid (a_1,b)\}$ is the vertical fiber of $X$ at $a_1$.
Now pick any $b_1\in X_{a_1}$; then $(a_1,b_1)\in X$.
Since $\Lambda$ is infinite, we can pick an element $a_2\ne a_1$ in $\Lambda$;
then $\beta\in{}^*(X_{a_2})$. Since $\beta\in{}^*(X_{a_1}\cap X_{a_2})$,
we can pick an element $b_2\ne b_1$ in $X_{a_1}\cap X_{a_2}$;
then $(a_1,b_2),(a_2,b_2)\in X$.
Proceed inductively in this way,
and at step $n$ pick $a_{n+1}\ne a_1,\ldots,a_n$
in $\Lambda$, and $b_{n+1}\ne b_1,\ldots,b_n$ in $\bigcap_{s=1}^{n+1} X_{a_s}$.
This is possible because $\beta\in\bigcap_{s=1}^{n+1}\hX_{a_s}={}^*(\bigcap_{s=1}^{n+1}X_{a_s})$.
Then $(a_n)_{n\in\N}$ and $(b_m)_{m\in\N}$ are infinite 1-1 sequences with the desired property.

\smallskip
$(2)$. The construction is similar to the one above.
By the hypothesis, $\alpha\in{}^*\Gamma$
where $\Gamma=\{a\in I\mid ((\s\res I)(a),\alpha)\in\hX\}$, and so we can pick
$h_1\in\Gamma$; then $\alpha\in{}^*(X_{h_1})$.
Then proceed inductively by picking, at step $n$,
an element $h_{n+1}\ne h_1,\ldots,h_n$
in $\Gamma\cap\bigcap_{s=1}^n X_{h_s}$. This is possible because
$\alpha\in{}^*\Gamma\cap\bigcap_{s=1}^n{}^*(X_{h_s})=
{}^*(\Gamma\cap\bigcap_{s=1}^n X_{h_s})$.
It is readily seen that the 1-1 sequence $(h_n)_{n\in\N}$ satisfies the required property.
\end{proof}

Let us now focus on the particular case when $I=\N$.
Recall that ${}^*n=n$ for every $n\in\N$,
and hence ${}^\circledast\alpha=\alpha$ for every $\alpha\in\hN$.
Note also that when $I=\N$, in the inductive step in the
proof of the previous Theorem, one can pick $a_{n+1}>a_n$ and $b_{n+1}>b_n$,
and hence we can assume that both sequences of natural numbers
$(a_n)_{n\in\N}$ and $(b_n)_{n\in\N}$ are increasing.
So, when $I=\N$, the previous theorem states the following.

\begin{corollary}\label{halfsquaresN}
Let $\alpha,\beta\in\hN\setminus\N$,
and let $X\subseteq\N\times\N$. Then
\begin{enumerate}
\item
If $(\alpha,{}^*\beta)\in{}^{**}\!X$ then
there exist increasing sequences $(a_n)_{n\in\N}$
and $(b_m)_{m\in\N}$ such that $(a_n,b_m)\in X$ for all $n\le m$.
\item
If $(\alpha,{}^*\alpha)\in{}^{**}\!X$ then
there exist an increasing sequence $(h_n)_{n\in\N}$
such that $(h_n,h_m)\in X$ for all $n<m$.
\end{enumerate}
\end{corollary}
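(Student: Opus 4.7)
The plan is to derive this corollary directly from Theorem \ref{Ramsey1} applied to $I=\N$, after checking two translations. First, the convention ${}^*n=n$ for every $n\in\N$ identifies $\hN\setminus\N$ with ${}^*\N\setminus\{{}^*n\mid n\in\N\}$, so the condition on $\alpha,\beta$ in the hypothesis of Theorem \ref{Ramsey1} is automatic. Second, Proposition \ref{circledastidentityonhR} (via Proposition 2.8(1), i.e., ${}^\circledast\nu=\nu$ for $\nu\in\hN$) shows that for $\alpha\in\hN$ we have ${}^\circledast\alpha=\alpha$. Therefore the premises ``$(\alpha,{}^*\beta)\in{}^{**}X$'' and ``$({}^\circledast\alpha,{}^*\beta)\in{}^{**}X$'' coincide, and similarly for part (2) with $\beta=\alpha$.

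With those two identifications in place, Theorem \ref{Ramsey1} immediately supplies $1$-$1$ sequences $(a_n),(b_m)$ (for (1)) and $(h_n)$ (for (2)) with the required intersection pattern. The only thing remaining is to upgrade ``$1$-$1$'' to ``increasing.'' For this I would revisit the inductive construction in the proof of Theorem \ref{Ramsey1}: the sets one picks from at each stage, namely $\Lambda=\{a\in\N\mid ((\s\res\N)(a),\beta)\in\hX\}$ and each vertical fiber $X_{a_s}$ (for (1)), or $\Gamma\cap\bigcap_{s\le n}X_{h_s}$ (for (2)), are \emph{infinite}: this is precisely the observation at the start of that proof that if $\alpha$ (resp.\ $\beta$) belongs to the hyper-extension of some subset of $\N$, then that subset must be infinite. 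Since an infinite subset of $\N$ is cofinal in $\N$, I can replace the choice ``$a_{n+1}\neq a_1,\dots,a_n$'' by ``$a_{n+1}>a_n$'' taken from $\Lambda\cap(a_n,\infty)$, and likewise $b_{n+1}>b_n$ taken from $\bigcap_{s\le n+1}X_{a_s}\cap(b_n,\infty)$; the internal nonemptiness argument (the intersection contains $\beta$ after starring) is unchanged because an intersection of finitely many cofinite sets remains cofinite and nonempty. The analogous strengthening yields an increasing $(h_n)$ in part (2).

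There is no substantial obstacle: once the translation ${}^\circledast\alpha=\alpha$ is noted, the statement is a special case of Theorem \ref{Ramsey1} whose only additional content is exploiting the total order on $\N$ to force monotonicity. The one place to be mildly careful is stating the inductive step so that the choice $a_{n+1}>a_n$ is made from a genuinely infinite set; this is guaranteed by the nonstandardness of $\alpha$ and $\beta$, exactly as used in the proof of Theorem \ref{Ramsey1}.
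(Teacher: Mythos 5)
Your proposal is correct and is essentially the paper's own derivation: the paper likewise notes that ${}^*n=n$ gives ${}^\circledast\alpha=\alpha$ for $\alpha\in\hN$, and that in the inductive step of Theorem \ref{Ramsey1} one may pick $a_{n+1}>a_n$ and $b_{n+1}>b_n$ because the relevant sets are infinite (their star extensions contain the nonstandard elements $\alpha$, $\beta$), hence cofinal in $\N$. One small slip: the sets $\Lambda$ and $X_{a_s}$ need not be cofinite, so the parenthetical appeal to ``finitely many cofinite sets'' should be dropped in favor of the justification you already give, namely that $\beta\in{}^*\bigl(\bigcap_{s}X_{a_s}\bigr)$ with $\beta$ nonstandard forces the intersection to be infinite.
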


\smallskip
Note that the above corollary yields a direct proof of Ramsey's Theorem for pairs.
(In the next \S\ref{sec-applications} we will also see
a proof of the general case for $k$-tuples).

\begin{theorem}[Ramsey for pairs]
Let $[\N]^2=C_1\cup\ldots\cup C_r$ be a finite coloring of the pairs of natural numbers.
Then there exists a color $C_i$ and an infinite set $H\subseteq\N$ such that
the pairs in $[H]^2=\{(h,h')\in H\times H\mid h<h'\}\subseteq C_i$ are monochromatic.
\end{theorem}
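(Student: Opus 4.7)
The plan is to apply Corollary \ref{halfsquaresN}(2) directly, using a single hypernatural $\alpha\in\hN\setminus\N$ as the source of a monochromatic infinite set.

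First I would view $[\N]^2=\{(n,m)\in\N\times\N\mid n<m\}$ as a subset of $\N\times\N$, so that the coloring is a partition $[\N]^2=C_1\cup\cdots\cup C_r$ of subsets of $\N\times\N$. By \emph{transfer}, one has
$${}^{**}[\N]^2\;=\;{}^{**}C_1\cup\cdots\cup{}^{**}C_r,$$
and moreover ${}^{**}[\N]^2=\{(\xi,\eta)\in{}^{**}\N\times{}^{**}\N\mid \xi<\eta\}$.

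Next I would pick any $\alpha\in\hN\setminus\N$. By the proposition asserting that ${}^*\nu>\nu$ for every $\nu\in\hN\setminus\N$, together with the fact that $\hN\subseteq{}^{**}\N$ and the agreement that $<$ denotes the appropriate star-extended order, the pair $(\alpha,{}^*\alpha)$ lies in ${}^{**}[\N]^2$. Hence there exists some color index $i\in\{1,\ldots,r\}$ with $(\alpha,{}^*\alpha)\in{}^{**}C_i$.

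Finally, I would apply Corollary \ref{halfsquaresN}(2) to $X=C_i\subseteq\N\times\N$, which yields an increasing sequence $(h_n)_{n\in\N}$ of natural numbers with $(h_n,h_m)\in C_i$ for all $n<m$. Setting $H=\{h_n\mid n\in\N\}$ gives an infinite subset of $\N$ with $[H]^2\subseteq C_i$, as required.

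The only real content here is verifying that $(\alpha,{}^*\alpha)\in{}^{**}[\N]^2$, which is a one-line appeal to ${}^*\alpha>\alpha$; the combinatorial work has already been absorbed into Corollary \ref{halfsquaresN}(2). There is no genuine obstacle, since the whole argument is a template application of the corollary, and I would expect the write-up to be three or four lines.
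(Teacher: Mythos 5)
Your proposal is correct and is essentially identical to the paper's own proof: the paper likewise identifies $[\N]^2$ with the upper diagonal of $\N\times\N$, transfers the coloring to ${}^{**}\N$, places the pair $(\alpha,{}^*\alpha)$ with $\alpha\in\hN\setminus\N$ (using $\alpha<{}^*\alpha$) into some ${}^{**}C_i$, and then invokes Corollary \ref{halfsquaresN}(2) to extract the increasing monochromatic sequence. There is nothing to add or correct.
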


\begin{proof}
For convenience, let us identify $[\N]^2$ with
the upper diagonal $\Delta^+=\{(a,b)\in\N\times\N\mid a<b\}$.
Note that, by \emph{transfer},
${}^{**}([\N]^2)=[{}^{**}\N]^2={}^{**}C_1\cup\ldots\cup{}^{**}C_r$
is a finite coloring of the pairs of ${}^{**}\N$.
Now pick any $\alpha\in\hN\setminus\N$, and recall that $\alpha<{}^*\alpha$.
Then the pair $(\alpha,{}^*\alpha)\in{}^{**}C_i$ for some $i$.
Finally, by the previous corollary,
there exists an increasing sequence $H=(h_n)$ such
that $[H]^2=\{(h_i,h_j)\mid i<j\}\subseteq C_i$, as desired.
\end{proof}

\subsection{The $u$-equivalence}
\

\noindent
The following equivalence relation on nonstandard natural numbers
revealed useful in studying partition regularity problems (see, \emph{e.g.}, \cite{dl}).

\begin{definition}
(\cite[\S 2]{dn2}) We say that two $k$-tuples $\vec{x},\vec{y}\in\hN^k$ are \emph{$u$-equivalent}, and
write $\vec{x}\ueq\vec{y}$, if $\vec{x}\in\hX\Leftrightarrow \vec{y}\in\hX$
for every $X\subseteq\N^k$.

More generally, if $\vec{x}\in(\N_n)^k$ and $\vec{y}\in(\N_m)^k$, we put
$\vec{x}\ueq\vec{y}$ if $\vec{x}\in\s^n(X)\Leftrightarrow\vec{y}\in\s^m(X)$
for every $X\subseteq\N^k$.
\end{definition}

The name ``$u$-equivalence" was given because
two elements $\vec{x},\vec{y}\in\hN^k$ are $u$-equivalent
if and only if the corresponding generated ultrafilters on $\N^k$ are equal:
$$\UU_{\vec{x}}:=\{A\subseteq\N^k\mid \vec{x}\in\hA\}=
\{A\subseteq\N^k\mid \vec{y}\in\hA\}=\UU_{\vec{y}}.$$

In the next proposition we itemize a few basic properties of $u$-equivalence
that we will use in the sequel.

\begin{proposition}\label{ueqproperties}
\

\begin{enumerate}
\item[$(i)$]
If $(x_1,\ldots,x_k)\,\ueq\,(y_1,\ldots,y_k)$ then $x_i\,\ueq\,y_i$ for every $i=1,\ldots,k$.\item[$(ii)$]
If $\vec{x}\,\ueq\,\vec{y}$ then $\s^n(\vec{x})\,\ueq\,\s^n(\vec{y})$ for every $n\in\N$.
\item[$(iii)$]
Let $\vec{x},\vec{y}\in(\N_n)^k$ and $f:\N^k\to\N^h$. If
$\vec{x}\,\ueq\,\vec{y}$, then \newline
$\s^n(f)(\vec{x})\,\ueq\,\s^n(f)(\vec{y})$.
\end{enumerate}
\end{proposition}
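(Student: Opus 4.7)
The plan is to prove each of the three clauses by exhibiting, for a given witness set in $\N$, $\N^k$, or $\N^h$, a corresponding ``test set" that reduces the claim to the hypothesis on $\vec{x}\,\ueq\,\vec{y}$. Each reduction is accomplished by \emph{transfer} applied to an appropriate iterate of the star map. I expect no single step to be a serious obstacle; the main care required is in keeping track of the levels $\N_n, \N_m$ so that each assertion ``$\vec{z}\in\s^j(X)$" is formulated at the correct level.

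For clause $(i)$, given any $A\subseteq\N$ and index $i$, I would consider the cylinder set
$X_{i,A}:=\N^{i-1}\times A\times\N^{k-i}\subseteq\N^k$. By \emph{transfer} applied to $\s^n$ (respectively $\s^m$) one has $\s^n(X_{i,A})=(\N_n)^{i-1}\times\s^n(A)\times(\N_n)^{k-i}$, and since $\vec{x}\in(\N_n)^k$, membership $\vec{x}\in\s^n(X_{i,A})$ is equivalent to $x_i\in\s^n(A)$. The analogous equivalence holds for $\vec{y}$ at level $m$. Invoking the hypothesis $\vec{x}\ueq\vec{y}$ at the set $X_{i,A}$ then yields $x_i\in\s^n(A)\Leftrightarrow y_i\in\s^m(A)$, which is exactly $x_i\ueq y_i$.

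For clause $(ii)$, fix any $X\subseteq\N^k$. Suppose $\vec{x}\in(\N_a)^k$ and $\vec{y}\in(\N_b)^k$. Applying \emph{transfer} with respect to $\s^n$ to the bounded formula ``$\vec{x}\in\s^a(X)$'' gives the equivalence $\vec{x}\in\s^a(X)\Leftrightarrow \s^n(\vec{x})\in\s^n(\s^a(X))=\s^{n+a}(X)$, and similarly $\vec{y}\in\s^b(X)\Leftrightarrow \s^n(\vec{y})\in\s^{n+b}(X)$. Since by hypothesis $\vec{x}\in\s^a(X)\Leftrightarrow\vec{y}\in\s^b(X)$, we obtain $\s^n(\vec{x})\in\s^{n+a}(X)\Leftrightarrow\s^n(\vec{y})\in\s^{n+b}(X)$, i.e., $\s^n(\vec{x})\ueq\s^n(\vec{y})$.

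For clause $(iii)$, fix any $A\subseteq\N^h$ and consider the preimage $X:=f^{-1}(A)\subseteq\N^k$. Applying \emph{transfer} to the bounded statement ``$\forall \vec{z}\in\N^k\ (\vec{z}\in X\leftrightarrow f(\vec{z})\in A)$'' with respect to $\s^n$ yields $\s^n(X)=\s^n(f)^{-1}(\s^n(A))$. Therefore $\vec{x}\in\s^n(X)\Leftrightarrow \s^n(f)(\vec{x})\in\s^n(A)$, and similarly for $\vec{y}$. Since $\vec{x}\ueq\vec{y}$ gives $\vec{x}\in\s^n(X)\Leftrightarrow\vec{y}\in\s^n(X)$, we conclude $\s^n(f)(\vec{x})\in\s^n(A)\Leftrightarrow\s^n(f)(\vec{y})\in\s^n(A)$, as required. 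The only subtlety is making sure $\s^n(f)$ has the right domain and codomain, which follows from \emph{transfer} applied to the fact that $f:\N^k\to\N^h$.
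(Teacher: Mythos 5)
Your proposal is correct and follows essentially the same route as the paper: clause $(i)$ via the cylinder set $\N^{i-1}\times A\times\N^{k-i}$, clause $(ii)$ by applying \emph{transfer} with $\s^n$ to the membership statement, and clause $(iii)$ by testing against the preimage $f^{-1}(A)$ and the identity $\s^n(f)^{-1}(\s^n(A))=\s^n(f^{-1}(A))$. No gaps; the level-tracking you do explicitly is handled in the paper by choosing a common sufficiently large level together with the observation that membership in $\s^m(X)$ persists to $\s^{m+1}(X)$ for tuples in $(\N_m)^k$.
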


\begin{proof}
$(i)$: Let $n$ be sufficiently large such that $x_i,y_i\in\N_n$ for $i=1,2,\ldots,k$.
Note that $\vec{x}\in\s^m(X)$ if and only if $\vec{x}\in\s^{m+1}(X)$ for any $X\subseteq\N^k$
as long as $\vec{x}\in\N_m^k$.

Given an arbitrary $X_i\subseteq\N$, let $X_j=\N$ for $j\not=i$
and $X=\prod_{j=1}^kX_j$. Note that $\s^n(X)=\prod_{j=1}^k\s^n(X_j)$.
Then, $x_i\in \s^n(X_i)$ if and only if $(x_1,\ldots,x_k)\in\s^n(X)$ if and only if
$(y_1,\ldots,y_k)\in\s^n(X)$ if and only if $y_i\in\s^n(X_i)$.

$(ii)$: Let $m\in\N$ be such that $\vec{x},\vec{y}\in\N^k_m$ and $n\in\N$.
For any $X\subseteq\N^k$, we have $\s^n(\vec{x})\in\s^{m+n}(X)$ if and only if
$\vec{x}\in\s^m(X)$ if and only if $\vec{y}\in\s^m(X)$ if and only if $\s^n(\vec{y})
\in\s^{m+n}(X)$.

$(iii)$: Let an arbitrary $X\in\N^h$ be given. Then, $\s^n(f)(\vec{x})\in\s^n(X)$
if and only if $\vec{x}\in\s^n(f)^{-1}(\s^n(X))=\s^n(f^{-1}(X))$ if and only if
$\vec{y}\in\s^n(f^{-1}(X))$ if and only if $\s^n(f)(\vec{y})\in\s^n(X)$.
%
%
\end{proof}

\begin{remark}
The converse implication in property $(i)$ above does not hold.
\end{remark}

However, under suitable hypotheses, one can preserve $u$-equivalence
when passing to ordered tuples.

\begin{proposition}
Let $x_i,y_i\in\N_\ell$ be such that $x_i\ueq y_i$
for every $i=1,\ldots,k$.
Then for all $0\le n_1<\ldots<n_k$ and $0\le m_1<\ldots<m_k$
where $n_{i+1}\ge n_i+\ell$ and $m_{i+1}\ge m_i+\ell$, we have
$(\s^{n_1}(x_1),\ldots,\s^{n_k}(x_k))\ueq(\s^{m_1}(y_1),\ldots,\s^{m_k}(y_k))$.\footnote
{~Precisely, if $x_i\ueq y_i$ generate the ultrafilter
$\U_i=\UU_{x_i}=\UU_{y_i}$ for $i=1,\ldots,k$, then both
$(\s^{n_1}(x_1),\ldots,\s^{n_k}(x_k))$
and $(\s^{m_1}(y_1),\ldots,\s^{m_k}(y_k))$ generate the tensor product
$\U_1\otimes\cdots\otimes\U_k$.}
\end{proposition}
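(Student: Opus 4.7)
The plan is to show that both tuples generate the tensor product ultrafilter $\U_1\otimes\cdots\otimes\U_k$ on $\N^k$; since two tuples are $u$-equivalent exactly when they generate the same ultrafilter, the conclusion follows immediately. It therefore suffices to prove by induction on $k$ the following claim: if $x_i\in\N_\ell$ generates $\U_i$ for $i=1,\ldots,k$ and $0\le n_1<\cdots<n_k$ with $n_{i+1}\ge n_i+\ell$, then $(\s^{n_1}(x_1),\ldots,\s^{n_k}(x_k))$ generates $\U_1\otimes\cdots\otimes\U_k$.

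The base case $k=1$ is a direct application of \emph{transfer} to the iterated star map $\s^{n_1}$. For the inductive step $k\to k+1$, given $X\subseteq\N^{k+1}$ define $\Phi:\N^k\to\PP(\N)$ by $\Phi(\vec a)=\{b:(\vec a,b)\in X\}$ and the slice $C=\Phi^{-1}(\U_{k+1})=\{\vec a:X_{\vec a}\in\U_{k+1}\}\subseteq\N^k$. By definition of the tensor product, $X\in\U_1\otimes\cdots\otimes\U_{k+1}$ iff $C\in\U_1\otimes\cdots\otimes\U_k$, and the inductive hypothesis rephrases the latter as $(\s^{n_1}(x_1),\ldots,\s^{n_k}(x_k))\in\s^{n_k+\ell}(C)$. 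So the crux is the equivalence
$$(\s^{n_1}(x_1),\ldots,\s^{n_{k+1}}(x_{k+1}))\in\s^{n_{k+1}+\ell}(X) \iff (\s^{n_1}(x_1),\ldots,\s^{n_k}(x_k))\in\s^{n_k+\ell}(C).$$

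Set $M=n_k+\ell$, $N=n_{k+1}+\ell$, $\vec\alpha=(\s^{n_1}(x_1),\ldots,\s^{n_k}(x_k))\in(\N_M)^k$, and $S=\s^M(\Phi)(\vec\alpha)\in\s^M(\PP(\N))$. Transferring ``$(a,b)\in X\iff b\in\Phi(a)$'' together with the commutation of preimages with $\s$ reduces the displayed equivalence to $\s^{n_{k+1}}(x_{k+1})\in\s^N(\Phi)(\vec\alpha)\iff S\in\s^M(\U_{k+1})$. For the left side, Proposition \ref{compositions2} (iterated) gives $(\i_M)^{N-M}\circ\s^M=\s^N$, whence $\s^N(\Phi)=(\i_M)^{N-M}(\s^M(\Phi))$; by the $n$-th Internal Transfer Principle applied to $\i_M$, and because $\i_M$ is the identity on $\N_M$, we obtain $\s^N(\Phi)(\vec\alpha)=(\i_M)^{N-M}(S)$. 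For the right side, introduce the ``membership set'' $R=\{(y,A)\in\N_\ell\times\PP(\N):y\in\s^\ell(A)\}\in\VV$, whose fiber over $x_{k+1}$ is precisely $\U_{k+1}$; applying $\s^M$ (fibers commute with $\s^M$ via Proposition \ref{transferaboutfunctions} applied to the fiber map $y\mapsto R_y$) yields $\s^M(R)_{\s^M(x_{k+1})}=\s^M(\U_{k+1})$. Combining the identification $(\i_M)^\ell\res\s^M(\PP(\N))=\s^M(\s^\ell\res\PP(\N))$ from Proposition \ref{compositions1} with the transfer of the defining condition of $R$ yields the key dictionary: for every $A\in\s^M(\PP(\N))$,
$$A\in\s^M(\U_{k+1})\iff\s^M(x_{k+1})\in(\i_M)^\ell(A).$$

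To tie everything together, observe that $N-M=n_{k+1}-n_k\ge\ell$, so iterating Proposition \ref{compositions2} once more gives $\s^{n_{k+1}}(x_{k+1})=(\i_M)^{N-M-\ell}(\s^M(x_{k+1}))$; applying $(\i_M)^{N-M-\ell}$ to both sides of the dictionary with $A=S$, and using that internal transfer preserves $\in$, converts ``$\s^M(x_{k+1})\in(\i_M)^\ell(S)$'' into ``$\s^{n_{k+1}}(x_{k+1})\in(\i_M)^{N-M}(S)$'', closing the chain of equivalences. The main obstacle is precisely this coordination of the ``external'' defining property of $\U_{k+1}$ (via membership of $x_{k+1}$) with the ``internal'' perspective afforded by $\i_M$; once the dictionary above is in place, the induction proceeds smoothly and completes the proof.
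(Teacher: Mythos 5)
Your proposal is correct, but it takes a genuinely different route from the paper's. The paper proves the $u$-equivalence directly by induction on $k$, peeling off the \emph{first} coordinate: after a backward transfer by $\s^{n_1}$, membership of the tuple in $\s^{n_k+\ell}(X)$ is rewritten as $x_1\in\s^\ell(A)$ for a section set $A\subseteq\N$ determined by the tail $(\s^{n_2-n_1-\ell}(x_2),\ldots,\s^{n_k-n_1-\ell}(x_k))$; the inductive hypothesis, applied to the shifted exponents $n_i-n_1-\ell$ (which is where the gap condition enters), gives $A=B$ for the corresponding set built from the $y$'s, and then $x_1\ueq y_1$ finishes. That argument treats the two tuples symmetrically, uses only the external iterated star maps and \emph{transfer}, and never needs ultrafilters or the internal star maps. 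You instead prove the stronger claim stated in the footnote, namely that each tuple generates $\U_1\otimes\cdots\otimes\U_k$, by induction peeling off the \emph{last} coordinate, and you pay with heavier machinery: the composition identities of Propositions \ref{compositions1} and \ref{compositions2}, the Internal Transfer Principle (Theorem \ref{n-dualtransfer}) for $\i_M$ with $M=n_k+\ell$, and the internalized membership relation $R$ turning ``$A\in\s^M(\U_{k+1})$'' into ``$\s^M(x_{k+1})\in(\i_M)^\ell(A)$''. I checked the key steps --- the identity $\s^N(\Phi)(\vec\alpha)=(\i_M)^{N-M}(S)$ (using that $\i_M$ fixes $\vec\alpha\in(\N_M)^k$), the dictionary for $\s^M(\U_{k+1})$, and the final internal-transfer step, where the hypothesis $n_{k+1}\ge n_k+\ell$ is exactly what makes $N-M-\ell\ge 0$ --- and they all go through; the only points left implicit are the level-invariance of membership (already noted at the start of the paper's proof of Proposition \ref{ueqproperties}) and the right-peeled definition (equivalently, associativity) of the $k$-fold tensor product, both routine. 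What your route buys is an explicit proof of the tensor-product identification that the paper only asserts in its footnote, plus a genuine illustration of the internal star maps in action; what the paper's route buys is brevity and economy of means, since it needs neither the ultrafilter language nor the maps $\i_n$.
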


\begin{proof}
We proceed by induction on $k$. The base case $k=1$ is trivial.
At the inductive step, note that for every $X\subseteq\N^k$, one has
$(\s^{n_1}(x_1),\ldots,\s^{n_k}(x_k))\in\s^{n_k+\ell}(X)\Leftrightarrow
(x_1,\s^{n_2-n_1}(x_2),\ldots,\s^{n_k-n_1}(x_k))\in\s^{n_k-n_1+\ell}(X)\Leftrightarrow
x_1\in\s^\ell(A)$ where
$$A:=\{t\in\N\mid (t,\s^{n_2-n_1-\ell}(x_2),\ldots,\s^{n_k-n_1-\ell}(x_k))\in
\s^{n_k-n_1}(X)\}.$$
In the same way, for every $X\subseteq\N^k$ one has that
$(\s^{m_1}(y_1),\ldots,\s^{m_k}(y_k))\in\s^{m_k+\ell}(X)\Leftrightarrow
y_1\in\s^\ell(B)$ where
$$B:=\{t\in\N\mid (t,\s^{m_2-m_1-\ell}(y_2),\ldots,\s^{m_k-m_1-\ell}(x_k))\in
\s^{m_k-m_1}(X)\}.$$
Now let $n'_i:=n_i-n_1-\ell$ and $m'_i:=m_i-m_1-\ell$ for $i=2,\ldots,k$.
Clearly, the sequences $0\le n'_2<\ldots<n'_k$ and $0\le m'_2<\ldots<m'_k$
are such that $n'_{i+1}\ge n'_i+\ell$ and $m'_{i+1}\ge m'_i+\ell$, and so we can apply
the inductive hypothesis to get
$(\s^{n'_2}(x_2),\ldots,\s^{n'_k}(x_k))\ueq(\s^{m'_2}(y_2),\ldots,\s^{m'_k}(y_k))$.
It follows that for every $t\in\N$ one has
$(t,\s^{n_2-n_1-\ell}(x_2),\ldots,\s^{n_k-n_1-\ell}(x_k))\in
\s^{n_k-n_1}(X)\Leftrightarrow (t,\s^{m_2-m_1-\ell}(x_2),\ldots,\s^{m_k-m_1-\ell}(x_k))
\in\s^{m_k-m_1}(X)$,
and hence $A=B$. Finally, since $x_1\ueq y_1$, we have
the equivalence $x_1\in\s^\ell(A)\Leftrightarrow y_1\in\s^\ell(B)$, and the proof is completed.
\end{proof}

\begin{corollary}\label{ueqmultiple}
If $\alpha,\alpha',\beta,\beta'\in\hN$ are such that
$\alpha\ueq\alpha'$ and $\beta\ueq\beta'$,
then $(\alpha,{}^*\!\beta)\ueq(\alpha',{}^*\!\beta')$, and hence
${}^{**}f(\alpha,{}^*\!\beta)\ueq{}^{**}f(\alpha',{}^*\!\beta')$
for every $f:\N\times\N\to\N$.
\end{corollary}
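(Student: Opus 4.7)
The plan is to derive this corollary as a direct instance of the preceding proposition, followed by an application of Proposition \ref{ueqproperties}(iii).

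First I would invoke the proposition immediately above Corollary \ref{ueqmultiple} with the parameters $k=2$ and $\ell=1$. Since $\alpha,\alpha',\beta,\beta'\in\hN=\N_1$, the hypothesis $x_i\ueq y_i$ in $\N_\ell$ holds when we set $x_1=\alpha$, $x_2=\beta$, $y_1=\alpha'$, $y_2=\beta'$. For the two increasing sequences I would choose $n_1=m_1=0$ and $n_2=m_2=1$, which trivially satisfy $n_{i+1}\ge n_i+\ell$ and $m_{i+1}\ge m_i+\ell$ with $\ell=1$. Applying the proposition (and recalling that $\s^0$ is the identity and $\s^1=\s$) then gives exactly $(\alpha,{}^*\beta)\ueq(\alpha',{}^*\beta')$.

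For the second assertion, I would note that both pairs $(\alpha,{}^*\beta)$ and $(\alpha',{}^*\beta')$ lie in $\N_2^2$, since $\hN\subset{}^{**}\N=\N_2$, and that $f:\N\times\N\to\N$ yields $\s^2(f)={}^{**}f:\N_2^2\to\N_2$. Then Proposition \ref{ueqproperties}(iii), applied with $n=2$, $k=2$, $h=1$, immediately yields ${}^{**}f(\alpha,{}^*\beta)\ueq{}^{**}f(\alpha',{}^*\beta')$.

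I do not anticipate a genuine obstacle here: once one identifies the right specialization of parameters in the previous proposition, both parts are essentially bookkeeping. The only small care needed is to verify that the ``level'' parameter $\ell=1$ is correctly aligned with the single application of $\s$ on the second coordinate, so that the separation condition $n_{i+1}\ge n_i+\ell$ is exactly met; this is what ensures the tensor-product-of-ultrafilters interpretation mentioned in the footnote of the preceding proposition applies and prevents any spurious interaction between the two coordinates.
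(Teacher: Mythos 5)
Your proof is correct and is essentially the paper's own argument: the paper also obtains $(\alpha,{}^*\beta)\ueq(\alpha',{}^*\beta')$ by specializing the preceding proposition to $k=2$, $\ell=1$, $n_1=m_1=0$, $n_2=m_2=1$ (with $x_1=\alpha$, $x_2=\beta$, $y_1=\alpha'$, $y_2=\beta'$), and then concludes via Proposition \ref{ueqproperties}$(iii)$. Your parameter bookkeeping, including the check of the separation condition $n_2\ge n_1+\ell$, matches the intended reading exactly.
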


\begin{proof}
One obtains $(\alpha,{}^*\!\beta)\ueq(\alpha',{}^*\!\beta')$
from the previous proposition where $x_1=\alpha$, $x_2=\beta$,
$y_1=\alpha'$, $y_2=\alpha'$, $\ell=1$,
$n_1=m_1=0$, and $n_2=m_2=1$.
Then, ${}^{**}f(\alpha,{}^*\!\beta)\ueq{}^{**}f(\alpha',{}^*\!\beta')$
directly follows from Proposition \ref{ueqproperties} $(iii)$.
\end{proof}

The relevance of the $u$-equivalence relation as a tool in Ramsey problems is grounded
on the following characterization.\footnote
{~The particular case of partition regularity of Diophantine equations
where one considers families of the form $\G_P=\{(x_1,\ldots,x_n )\mid P(x_1,\ldots,x_n)=0\}$
where $P\in\Z[X]$ can be found in \cite[\S 3]{dn1}.}

\begin{proposition}\label{PRequivalence}
Let $\G\subseteq\PP(\N)$ be a family of finite patterns.
Then the following are equivalent:
\begin{enumerate}
\item
$\G$ is partition regular on $\N$, \emph{i.e.} for every finite coloring
of $\N$ there exists a monochromatic $G\in\G$.
\item
There exists $X\in{}^*\G$ (or, equivalently, there exists $X\in\s^n(\G)$ for some $n$)
such that all elements in $X$ are $u$-equivalent to each other.
\end{enumerate}
\end{proposition}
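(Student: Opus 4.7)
The plan is to establish the cycle $(1) \Rightarrow ({}^*\G\text{-version of }(2)) \Rightarrow (\s^n(\G)\text{-version of }(2)) \Rightarrow (1)$, where the middle implication is immediate by setting $n=1$. The easy and hard directions are the first and third implications, respectively.

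For $(2) \Rightarrow (1)$ at arbitrary $n$, suppose $X \in \s^n(\G)$ has all its elements pairwise $u$-equivalent, and fix a finite partition $\N = C_1 \cup \cdots \cup C_r$. Applying $n$-fold transfer to the sentence ``$C_1 \cup \cdots \cup C_r = \N$'' yields $\s^n(C_1) \cup \cdots \cup \s^n(C_r) = \N_n$. Since $X \subseteq \N_n$, a chosen element $x_0 \in X$ lies in some cell $\s^n(C_{i_0})$, and by the definition of $u$-equivalence every other element of $X$ lies in the same cell. Hence $X \subseteq \s^n(C_{i_0})$. Together with $X \in \s^n(\G)$, this realizes the bounded-quantifier formula ``$\exists Y \in \s^n(\G)\,(Y \subseteq \s^n(C_{i_0}))$'', whose $n$-fold inverse transfer back to $\VV$ produces a monochromatic $G \in \G$ with $G \subseteq C_{i_0}$.

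For $(1) \Rightarrow (2)$, my plan has two stages. First, I would extract an ultrafilter $\U$ on $\N$ such that every $A \in \U$ contains some $G \in \G$. To see this exists, set $\mathcal{J} := \{A \subseteq \N : \exists G \in \G,\ G \subseteq A\}$; partition regularity is exactly the assertion that no finite subfamily of $\mathcal{I} := \PP(\N) \setminus \mathcal{J}$ covers $\N$, so the collection $\{\N \setminus A : A \in \mathcal{I}\}$ has the finite intersection property. Any ultrafilter $\U$ extending this collection must satisfy $\U \subseteq \mathcal{J}$, for otherwise some $A \in \U \cap \mathcal{I}$ would force $\N \setminus A \in \U$, an inconsistency. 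Second, I would invoke saturation: for each $A \in \U$ set $\F_A := \{Y \in {}^*\G : Y \subseteq {}^*\!A\}$. Each $\F_A$ is nonempty, since any finite $G \in \G$ with $G \subseteq A$ satisfies $G = {}^*G \in {}^*\G$ and $G \subseteq A \subseteq {}^*\!A$ (using ${}^*n = n$ on $\N$); and $\{\F_A\}_{A \in \U}$ has the finite intersection property because $\U$ is closed under finite intersections. Saturation of the model at a cardinal exceeding $|\U|$ then yields $X \in \bigcap_{A \in \U} \F_A$. For such $X$, every $\xi \in X$ belongs to ${}^*\!A$ for all $A \in \U$, so $\UU_\xi \supseteq \U$ and hence $\UU_\xi = \U$ by ultrafilter maximality. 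Thus every pair of elements of $X$ generates the same ultrafilter and is $u$-equivalent.

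The principal obstacle is the saturation bound: since $|\U|$ can be as large as $2^{\aleph_0}$, the argument requires at least $(2^{\aleph_0})^+$-saturation, which is guaranteed by Theorem \ref{ultrapowermodel} upon choosing the base cardinal $\kappa$ large enough. Modulo this choice, the only delicate bookkeeping is the extraction of $\U \subseteq \mathcal{J}$ via the finite-intersection argument above, together with the routine verification that ${}^*G = G$ for finite $G \subseteq \N$.
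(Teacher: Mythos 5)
Your proposal is correct and takes essentially the same route as the paper: your collection $\{\N\setminus A \mid A\in\mathcal{I}\}$ is exactly the paper's filter base $\U_0$, your saturation step applied to the sets $\F_A=\{Y\in{}^*\G\mid Y\subseteq{}^*\!A\}={}^*\{G\in\G\mid G\subseteq A\}$ is in substance the paper's $\mathfrak{c}^+$-enlargement argument, and your $(2)\Rightarrow(1)$ is the same ``pick an element, locate its cell, use $u$-equivalence, transfer back'' argument, merely carried out at level $n$ instead of $n=1$.
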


\begin{proof}
To avoid triviality we can assume that
each member in $\G$ has at least two elements.

\smallskip
$(1)\Rightarrow(2)$.\
Denote $F^c:=\N\setminus F$ and let
\[\U_0:=\{F\subseteq\N\mid F^c\,\mbox{ does not contain any element in }\,\G\}.\]
Then, $\U_0$ has finite intersection property. Indeed, if
$\bigcap_{i=1}^kF_i=\emptyset$ for some $F_i\in\U_0$, then
$\N=\bigcup_{i=1}^kF^c_i$. Note that $\{F^c_1,F^c_2\setminus F^c_1,\ldots,
F^c_k\setminus\bigcup_{i=1}^{k-1}F^c_i\}$ is a partition of $\N$ and, clearly,
no cell in the partition contains a member of $\G$. Note also that
any finite set is not in $\U_0$ because if a finite set $F_0\in\U_0$,
then $F^c_0$ and $\{x\}$ for each $x\in F_0$ form a finite partition of $\N$
and no cell in the partition contains members of $\G$.

Let $\U$ be an ultrafilter containing $\U_0$. Then every $F\in\U$ contains
some members from $\G$ because otherwise $F^c$ would be in $\U_0\subseteq\U$.
By $\mathfrak{c}^+$-enlargement property we conclude that the intersection of
all $^*\!F$ for $F\in\U$ contains a member $X$ in $^*\G$. Clearly,
all elements in $X$ belongs to the same $u$-equivalence class.

\smallskip
$(2)\Rightarrow(1)$.\ Let $X\in{}^*\G$ be such that $x\ueq y$ for all
$x,y\in X$. Fix $x_0\in X$. For any partition
$\{C_1,C_2,\ldots,C_k\}$ of $\N$
there is an $i\leq k$ such that $x_0\in{}^*C_i$
which implies that $X\subseteq{}^*C_i$. By {\it transfer}
we can find a $Y\in\G$ such that $Y\subseteq C_i$.
\end{proof}

For instance, recall \emph{Schur's Theorem}:
``In every finite coloring of $\N$ there exists a monochromatic triple $a, b, a+b$ with $a\ne b$."
Then the partition regularity property stated
in Schur's Theorem is equivalent to the existence of
numbers $\alpha\ne\beta$ in $\hN$ (or in $\s^n(\N)$ for some $n$)
such that $\alpha\,\ueq\,\beta\,\ueq\,\alpha+\beta$.

\smallskip
In the sequel, we will use the following combinatorial property
of $u$-equivalence with respect to iterated star maps.

\begin{proposition}\label{starofpairs}
Let $\alpha,\beta\in\hN\setminus\N$ where $\alpha\ueq\beta$ and $\alpha\ne\beta$,
and let $X\subseteq\N^4$.
If $(\alpha,{}^*\alpha,{}^*\beta,{}^{**}\beta)\in{}^{***}\!X$ then
there exist an increasing sequence $(a_n)_{n\in\N}$ such that
$(a_i,a_{2j},a_{2j+1},a_k)\in X$ for all $i<2j<2j+1<k$.
\end{proposition}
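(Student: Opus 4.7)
The plan is to construct the increasing sequence $(a_n)$ by induction, mimicking the peeling argument in the proof of Theorem~\ref{Ramsey1}, but now across four coordinates. At each peel step, the hypothesis $\alpha\ueq\beta$ together with the transfer principle will let me reduce every residual membership condition to a single condition in the ultrafilter $\UU=\UU_\alpha=\UU_\beta$ on $\N$, so that finite intersections of such conditions remain in $\UU$ and yield infinitely many standard choices for the next element.

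I would begin by showing that the set $A_1=\{a\in\N:(a,{}^*\alpha,{}^*\beta,{}^{**}\beta)\in{}^{***}X\}$ belongs to $\UU$. This is checked by applying the internal transfer principle (Theorem~\ref{dualtransfer}) to the hypothesis: the internal star map $\circledast$ sends $(\alpha,{}^*\alpha,{}^*\beta,{}^{**}\beta)$ to $(\alpha,{}^{**}\alpha,{}^{**}\beta,{}^{***}\beta)$ and $\s^3(X)$ to $\s^4(X)$, and the resulting equivalent membership is precisely the $\s$-transfer of the defining property of $A_1$, i.e., $\alpha\in{}^*\!A_1$. Since $\alpha\notin\N$, the ultrafilter $\UU$ is nonprincipal, so $A_1$ is infinite and I may pick $a_1\in A_1$ with $a_1>0$. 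All four entries of $(a_1,{}^*\alpha,{}^*\beta,{}^{**}\beta)$ now lie in $\operatorname{range}(\s)$, so transfer lowers one level to $(a_1,\alpha,\beta,{}^*\beta)\in{}^{**}X$. Iterating this procedure---peel $\alpha$ from position~$2$ to extract $a_2>a_1$, peel $\beta$ from position~$3$ to extract $a_3>a_2$, and peel $\beta$ once more (after a final transfer reduction from $(a_1,a_2,a_3,{}^*\beta)\in{}^{**}X$ to $(a_1,a_2,a_3,\beta)\in{}^*X$) to extract $a_4>a_3$---yields $(a_1,a_2,a_3,a_4)\in X$.

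To extend the sequence past the first four elements I would maintain the inductive invariant that at each stage $n$ there exist internal witnesses $\alpha_n,\beta_n\in\hN\setminus\N$, with $\alpha_n,\beta_n>a_n$, $\alpha_n\ne\beta_n$, $\alpha_n\ueq\beta_n$, such that for every admissible partial $4$-tuple---whose positions are filled by chosen $a_i$'s and whose remaining positions are filled by the appropriate $\s$-iterate of $\alpha_n$ or $\beta_n$---the resulting tuple lies in the correct $\s^m(X)$. Passing from $n$ to $n+1$ then amounts to picking $a_{n+1}$ in a finite intersection of sets in $\UU$: each set comes either from a closing condition of the form $(a_i,a_{2j},a_{2j+1},a_{n+1})\in X$ (whenever the three left positions are already chosen with $2j+1\le n$) or from a preparation condition ensuring future compatibility, and each such condition translates by transfer into membership in a $\UU$-set. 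Refreshing the witnesses $\alpha_{n+1},\beta_{n+1}>a_{n+1}$ in the same $\ueq$-class as $\alpha$ is possible by saturation (or by shifting $\alpha,\beta$ along the infinite realization set of the $u$-equivalence type, which itself lies in $\UU$).

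The main obstacle will be a family of swap lemmas asserting that, once the chosen standard prefix $a_{i_1},\dots,a_{i_\ell}$ has been peeled from a mixed tuple, the residual membership condition on the hyper-coordinates is independent of which $\s$-iterate of $\alpha$ (resp.~$\beta$) represents each remaining role. After peeling the standard coordinates from the fiber $X_{a_{i_1},\dots,a_{i_\ell}}$, each remaining condition reduces to ``$Y\in\UU^{\otimes k}$'' for that fiber $Y$ and $k$ the number of hyper-positions, and the invariance under replacing $\alpha$ by ${}^*\alpha$ (or $\beta$ by ${}^*\beta$) at any single position follows from internal transfer combined with the elementary fact that $\xi\ueq\s^n(\xi)$ for any hypernatural $\xi$, a direct consequence of transfer applied to the definition of $u$-equivalence. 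The hypothesis $\alpha\ueq\beta$ is essential precisely here: it guarantees $\UU_\alpha=\UU_\beta$, so the common tensor power $\UU^{\otimes k}$ governs every relevant fiber condition and the swaps go through uniformly.
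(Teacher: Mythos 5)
The core of your plan --- ``reduce every residual membership condition to a single condition in $\UU=\UU_\alpha=\UU_\beta$'' and, for several hyper-positions, to ``$Y\in\UU^{\otimes k}$'', justified by swap lemmas replacing $\beta$ by ${}^*\beta$ at a single position --- is unsound, and it breaks precisely at the step where the paper's proof does something different. The two middle coordinates ${}^*\alpha,{}^*\beta$ sit at the \emph{same} star level, so the joint ultrafilter they generate is (the image of) $\UU_{(\alpha,\beta)}$, an arbitrary ultrafilter on $\N^2$ with both marginals $\UU$; it is \emph{not} $\UU\otimes\UU$. The paper's tensor-product proposition needs level gaps $n_{i+1}\ge n_i+\ell$, which fails here, and coordinatewise facts like $\xi\ueq{}^*\xi$ do not give joint invariance: starring one coordinate of a tuple changes its joint type. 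Concretely, take $X=\{(n,m,s,t)\in\N^4\mid m<s\le 2m\}$ and (using $\mathfrak{c}^+$-enlargement, as in Proposition \ref{brauer}, noting that every finite partition of $\N$ has a cell meeting some interval $[N,2N]$ in two points) a pair $\alpha\ueq\beta$ with $\alpha<\beta\le 2\alpha$. The hypothesis $(\alpha,{}^*\alpha,{}^*\beta,{}^{**}\beta)\in{}^{***}X$ holds, yet the middle fiber $\{(m,s)\mid m<s\le 2m\}$ is \emph{not} in $\UU\otimes\UU$ (all its vertical fibers are finite), so your reduction returns the wrong verdict; and your swapped tuple $(\alpha,{}^*\alpha,{}^{**}\beta,{}^{***}\beta)$ fails the condition, since ${}^{**}\beta$ exceeds every element of ${}^{**}\N$, in particular $2\,{}^*\alpha$. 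The same example kills your sequential peeling of positions $2$ and $3$: after choosing $a_1$, your intended intermediate state $(a_1,a_2,\beta,{}^*\beta)\in{}^{**}X$ requires a standard $a_2$ with $a_2<\beta\le 2a_2$, which is impossible. The underlying logical point is that transfer lowers \emph{all} parameters one level simultaneously; you cannot replace $\alpha$ by a standard number while freezing $\beta$ at the same level, so positions $2$ and $3$ cannot be handled one at a time.

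This is exactly why the paper's statement only asserts $(a_i,a_{2j},a_{2j+1},a_k)\in X$ for an \emph{adjacent} middle pair, and why its proof selects those two elements jointly: it works with sets of pairs, $\Gamma_a=\{(m,s)\mid a,m,s \text{ distinct},\ (a,m,s,\beta)\in\hX\}$, keeps the invariant that $(\alpha,\beta)$ (as a pair) lies in ${}^*\Gamma_{a_i}$, in ${}^*(\Lambda\times\Lambda)$, and in the starred products of fibers $X_{a_i,a_{2s},a_{2s+1}}\times X_{a_i,a_{2s},a_{2s+1}}$, and then picks $(a_{2j+2},a_{2j+3})$ \emph{simultaneously} from one standard set of pairs. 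Your first peel (getting $a_1$, via your $A_1$ or the paper's $\Lambda$) and your last peel (position $4$, where ${}^*\beta$ does sit strictly above all remaining parameters) are fine, and the idea of carrying $\alpha,\beta$ as inductive witnesses is the right spirit (no ``refreshing'' by saturation is needed: the same $\alpha,\beta$, being infinite, exceed all chosen standard numbers). But to repair the argument you must replace the $\UU^{\otimes k}$ reduction and the swap lemmas by an invariant tracking the pair $(\alpha,\beta)$ jointly, i.e.\ essentially the paper's proof.
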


\begin{proof}
We will use the following notation for $a,a',a''\in\N$:
\begin{itemize}
\item
$\Lambda:=\{n\in\N\mid(n,\alpha,\beta,{}^*\beta)\in{}^{**}\!X\}$;
\item
$\Gamma_a:=\{(m,s)\in\N\times\N\mid a,m,s\ \text{are distinct and}\ (a,m,s,\beta)\in\hX\}$;
\item
$X_{a,a'\!,a''}=\{t\in\N\mid (a,a',a'',t)\in X\}$.
\end{itemize}
By the hypothesis, $\alpha\in{}^*\!\Lambda$, and so we can pick $a_1\in\Lambda$.
Then $(a_1,\alpha,\beta,{}^*\beta)\in{}^{**}\!X$, and hence $(\alpha,\beta)\in{}^*\Gamma_{a_1}$.
Since $\alpha\ueq\beta$, also $\beta\in{}^*\!\Lambda$. Note that
$a_1,\alpha,\beta$ are distinct, and so
$(\alpha,\beta)\in {}^*\Gamma_{a_1}\cap{}^*(\Lambda\times\Lambda)$.
Pick $(a_2,a_3)\in\Gamma_{a_1}\cap(\Lambda\times\Lambda)$,
so that $a_1, a_2, a_3$ are distinct, $(a_1,a_2,a_3,\beta)\in\hX$,
and $a_2,a_3\in\Lambda$. We have that $\beta\in\hX_{a_1,a_2,a_3}$,
and hence also $\alpha\in{}^*\!X_{a_1,a_2,a_3}$. Then
$$(\alpha,\beta)\in{}^*(X_{a_1,a_2,a_3}\times X_{a_1,a_2,a_3})\cap
{}^*\Gamma_{a_1}\cap{}^*\Gamma_{a_2}\cap{}^*\Gamma_{a_3}\cap{}^*(\Lambda\times\Lambda),$$
and we can pick $(a_4,a_5)\in (X_{a_1,a_2,a_3}\times X_{a_1,a_2,a_3})\cap
\Gamma_{a_1}\cap\Gamma_{a_2}\cap\Gamma_{a_3}\cap(\Lambda\times\Lambda)$.
This means that $a_1,a_2,a_3,a_4,a_5$ are distinct,
$(a_1,a_2,a_3,a_4),(a_1,a_2,a_3,a_5)\in X$;
$(a_i,a_4,a_5,\beta)\in\hX$, and hence $\beta\in{}^*\!X_{a_i,a_4,a_5}$, for $i=1,2,3$;
and $a_4,a_5\in\Lambda$, and hence $(\alpha,\beta)\in{}^*\Gamma_{a_4}$ and
$(\alpha,\beta)\in{}^*\Gamma_{a_5}$. By inductively iterating this process,
one defines the desired 1-1 sequence $(a_n)_{n\in\N}$.

Precisely, assume that pairwise different elements $a_1,\ldots,a_{2j+1}$ have been defined
in such a way that:
\begin{itemize}
\item[$(a)$]
$(a_i,a_{2s},a_{2s+1},a_t)\in X$ for all $i<2s<2s+1<t\le 2j+1$;
\item[$(b)$]
$\alpha,\beta\in\hX_{a_i,a_{2s},a_{2s+1}}$ for all $i<2s<2j$;
\item[$(c)$]
$(\alpha,\beta)\in{}^*\Gamma_{a_i}$ for every $i\le 2j+1$.
\end{itemize}
Then $(\alpha,\beta)$ belongs to ${}^*\!\mathcal{X}$ where
$$\mathcal{X}=\bigcap_{i<2s\le 2j}(X_{a_i,a_{2s},a_{2s+1}}\times X_{a_i,a_{2s},a_{2s+1}})\cap
\bigcap_{i=1}^{2j+1}\Gamma_{a_i}\cap(\Lambda\times\Lambda),$$
and we can pick $(a_{2j+2},a_{2j+3})\in\mathcal{X}$.
Then it is verified in a straightforward manner that
elements $a_1,\ldots,a_{2j+1},a_{2j+2},a_{2j+3}$ are pairwise different
and still satisfy the required conditions $(a), (b), (c)$.
\end{proof}

\section{Applications}\label{sec-applications}

We present here three examples of applications where we use
the nonstandard technique of iterated star maps and internal star maps.\footnote
{~More applications will be found in \cite{dr}.}

\subsection{Ramsey's Theorem}
\

\noindent
We present first a nonstandard proof of the classic Ramsey's Theorem
in the general case for $k$-tuples. The proof has perhaps
the simplest combinatorial argument.

\begin{theorem}
Let $k,r\in\N$ be positive. For every coloring $c:[\N]^k\to[r]$
there exists an infinite set $H\subseteq\N$ such that
$[H]^k$ is monochromatic.
\end{theorem}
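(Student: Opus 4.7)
The plan is to mimic the proof of Ramsey's theorem for pairs given just before the statement, using a $k$-fold tower of iterated stars in place of the single pair $(\alpha,\s\alpha)$, and an iterated fiber-picking argument in the style of Theorem \ref{Ramsey1}. Fix $\alpha \in \hN \setminus \N$ and consider
$$\vec{\alpha} \;=\; (\alpha,\ \s\alpha,\ \s^2\alpha,\ \ldots,\ \s^{k-1}\alpha) \;\in\; \N_k^k.$$
By the end-extension property of the higher-level naturals (each $\s^j\alpha \in \N_{j+1}\setminus\N_j$ exceeds every element of $\N_j$), this tuple is strictly increasing, so $\vec{\alpha} \in [\N_k]^k = \s^k([\N]^k)$. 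Transferring the coloring $c$, the map $\s^k(c)$ assigns $\vec{\alpha}$ some value $i$; equivalently $\vec{\alpha} \in \s^k(C_i)$, where $C_i = c^{-1}(i)$.

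The core technical step is the following extraction lemma: if $E\subseteq\N^m$ satisfies $(\alpha, \s\alpha, \ldots, \s^{m-1}\alpha) \in \s^m(E)$, then the set
$$\Lambda \;=\; \{a\in\N : (\alpha, \s\alpha, \ldots, \s^{m-2}\alpha) \in \s^{m-1}(E^a)\},$$
with $E^a = \{(b_1,\ldots,b_{m-1}) : (a,b_1,\ldots,b_{m-1}) \in E\}$, satisfies $\alpha \in \s(\Lambda)$ and is therefore infinite. To prove this, encode $a \mapsto E^a$ as a function $f\in\VV$ so that $a\in\Lambda \iff (\alpha, \s\alpha, \ldots, \s^{m-2}\alpha) \in \s^{m-1}(f)(a)$. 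Transferring the universal formula $(b_1,\ldots,b_{m-1})\in f(a) \iff (a,b_1,\ldots,b_{m-1})\in E$ along the bounded elementary embedding $\s^m:\VV\prec_b\VV_m$ and specializing to $a=\alpha$ and $(b_j)=(\s\alpha,\ldots,\s^{m-1}\alpha)$ rewrites the hypothesis as $(\s\alpha,\ldots,\s^{m-1}\alpha)\in \s^m(f)(\alpha)$; transferring once more along $\s$ the defining formula of $\Lambda$ identifies this with $\alpha\in \s(\Lambda)$. Since $\alpha\in\hN\setminus\N$, finiteness of $\Lambda$ would force $\s(\Lambda)=\Lambda\subseteq\N$, a contradiction.

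I then construct $h_1 < h_2 < \cdots$ recursively, maintaining the invariant
$$(\mathbf{I}_s)\qquad (h_{j_1},\ldots,h_{j_s},\ \alpha,\s\alpha,\ldots,\s^{k-s-1}\alpha) \;\in\; \s^{k-s}(C_i)$$
for every $s\in\{0,1,\ldots,\min(k,n)\}$ and every $j_1<\cdots<j_s$ in $\{1,\ldots,n\}$. The case $s=0$ is the opening observation, and the case $s=k$ gives $(h_{j_1},\ldots,h_{j_k})\in C_i$, the monochromaticity sought on $H=\{h_n : n\geq 1\}$. To pick $h_{n+1}>h_n$, it suffices to verify each new instance with $j_s=n+1$: for admissible $s\geq 1$ and $(s-1)$-tuples $j_1<\cdots<j_{s-1}\leq n$, applying the extraction lemma to $E = \{(a,b_1,\ldots,b_{k-s}) : (h_{j_1},\ldots,h_{j_{s-1}},a,b_1,\ldots,b_{k-s})\in C_i\}$, whose hypothesis is precisely $(\mathbf{I}_{s-1})$ for $(j_1,\ldots,j_{s-1})$, yields an infinite $\Lambda_{j_1,\ldots,j_{s-1}}^{(s)}\subseteq\N$ with $\alpha\in\s(\Lambda_{j_1,\ldots,j_{s-1}}^{(s)})$. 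Only finitely many such sets arise, so $\alpha$ lies in $\s$ of their intersection; the intersection is therefore infinite, and one selects $h_{n+1}>h_n$ inside it.

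The main obstacle is choosing the invariant so that each new instance reduces to a single application of the extraction lemma: the $\alpha$-tail must shorten by exactly one coordinate and the outer $\s^{k-s}$ must drop one level each time an $h$ is consumed, so that the lemma's hypothesis at level $m=k-s+1$ matches the surviving instance of $(\mathbf{I}_{s-1})$ produced at the previous stage. Once this matching is arranged, the iterated transfer inside the lemma closes the loop automatically.
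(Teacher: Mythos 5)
Your proof is correct, but it takes a genuinely different route from the paper's. Both arguments begin identically: color the increasing tower $\alpha<\s(\alpha)<\cdots<\s^{k-1}(\alpha)$ with $\s^k(c)$ and recursively pull standard witnesses down into a homogeneous set. The paper, however, keeps a single invariant at the top level --- $\s^k(c)$ is constantly $t$ on all $k$-subsets of $\{a_1,\ldots,a_m\}\cup\overline{\alpha}$ --- obtains $a_{m+1}$ by one backward transfer, and then must lift the resulting level-$(k-1)$ statement back to level $k$; this lift is precisely where the internal star maps enter: a $k$-subset containing $\s^{k-1}(\alpha)$ but omitting some $\s^p(\alpha)$ is realized as the image under $\i_p$ of a tuple covered by the inductive hypothesis, using $\i_p(\s^\ell(\alpha))=\s^{\ell+1}(\alpha)$ for $\ell\ge p$ and $\i_p(t)=t$. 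You avoid the internal star maps altogether: your graded invariant $(\mathbf{I}_s)$ records mixed tuples (a prefix of chosen naturals followed by the bottom $k-s$ entries of the tower) at the reduced level $\s^{k-s}(C_i)$, and each extension step is handled by your fiber-extraction lemma, which amounts to two applications of transfer in the style of Theorem \ref{Ramsey1}, together with the finite-intersection observation that $\alpha\in\s(\Lambda_1\cap\cdots\cap\Lambda_N)$ forces the intersection to be infinite. Both the lemma and the bookkeeping check out; the only point you leave implicit is the routine transfer along $\s^{k-s}$ showing that $a\in\Lambda^{(s)}_{j_1,\ldots,j_{s-1}}$ is literally the new instance of $(\mathbf{I}_s)$ with last chosen entry $a$. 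As for what each approach buys: yours is closer to the classical iterated-hyperextension (tensor-pair) proof, needing only external iterated stars and transfer, at the price of maintaining a family of conditions indexed by all sub-tuples (finitely many at each stage, so harmless); the paper's has a simpler single invariant and a one-transfer choice of $a_{m+1}$, but pays with the $\i_p$-machinery --- deliberately so, since that proof is there to showcase the internal star maps developed in the paper.
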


\begin{proof}
The case $k=1$ is trivial, so let us assume $k\ge 2$.
To begin with, we observe that for every $i$, the iterated star extension
$\s^i(c):[\N_i]^k\to[r]$ is an $r$-colorings of the $k$-tuples of $\N_i$
that extends $c$, \emph{i.e.} $\s^i(c)\res[\N]^k=c$.

We work in the $k$-th internal universe $\VV_k$.
Pick any $\alpha\in\N_1\setminus\N$, and consider the
$k$-tuple
$$\overline{\alpha}:=\{\alpha<\s(\alpha)<\ldots<\s^{k-1}(\alpha)\}\in[\N_k]^k.$$
Let $t=\s^k(c)(\overline{\alpha})\in [r]$ be the color of the $k$-tuple $\overline{\alpha}$.
By induction, we define an increasing sequence $(a_m)$ of natural numbers
in such a way that, for every $m\ge 0$, all $k$-tuples from
$\{a_1,\ldots,a_m\}\cup\overline{\alpha}$
have color $t$, \emph{i.e.} $\s^k(c)\res[\{a_1,\ldots,a_m\}\cup\overline{\alpha}]^k\equiv t$.


The base case $m=0$ is trivial since we assumed $\s^k(\overline{\alpha})=t$, so
let us consider the inductive step $m+1$.

The element $y=\alpha$ witnesses that the following sentence is true:
$$\exists y\in\N_1\,(y>a_m\,\wedge\,
\s^k(c)\res[\{a_1,\ldots,a_m\}\cup\{y,\s(\alpha),\s^2(\alpha),\ldots,\s^{k-1}(\alpha)\}]^k
\equiv t).$$
By \emph{backward transfer} we obtain that:
\[\exists y\in\N\,(y>a_m\,\wedge\,
\s^{k-1}(c)\res[\{a_1,\ldots,a_m\}\cup\{y,\alpha,\s(\alpha),\ldots,\s^{k-2}(\alpha)\}]^k
\equiv t).\]
So, we can pick a natural number $a_{m+1}>a_m$ such that
\begin{equation}\label{indhypothesis}
\s^{k-1}(c)\res[\{a_1,\ldots,a_m,a_{m+1}\}\cup\{\alpha,\s(\alpha),\ldots,\s^{k-2}(\alpha)\}]^k\equiv t.
\tag{$\dagger$}
\end{equation}
We want to show that $\s^k(c)\res[\{a_1,\ldots,a_m,a_{m+1}\}\cup\overline{\alpha}]^k\equiv t$.

Let $\overline{b}=\{b_1<b_2<\cdots<b_k\}\in[\{a_1,\ldots,a_m,a_{m+1}\}\cup\overline{\alpha}]^k$.
If $\overline{b}=\overline{\alpha}$, then trivially $\s^k(c)(\overline{b})=t$.
If $b_k<\s^{k-1}(\alpha)$, then
$\s^k(c)(\overline{b})=\s^{k-1}(c)(\overline{b})=t$ by (\ref{indhypothesis}).

We are left to consider the case when $b_k=\s^{k-1}(\alpha)$ and $\overline{b}\not=\overline{\alpha}$;
then there exists $\s^p(\alpha)\notin\overline{b}$ where $0\le p\le k-2$.
Define the $k$-tuple $\overline{b}'=\{b'_1,\ldots,b'_k\}$ as follows:
If $b_i<\s^p(\alpha)$ let $b'_i=b_i$;
if $b_i=\s^q(\alpha)>\s^p(\alpha)$ then let $b'_i=\s^{q-1}(\alpha)$.
Now recall that the $p$-th internal star map is such that
$\i_p(\s^\ell(\alpha))=\s^{\ell+1}(\alpha)$ whenever $\ell\ge p$;
recall also that $\i_p$ is the identity on all elements of $\N_p$.
Thus we have $\i_p(b'_i)=b_i$ for every $i=1,\ldots,k$, and hence
$\i_p(\overline{b}')=\overline{b}$.
By property (\ref{indhypothesis}), we know that $\s^{k-1}(c)(\overline{b}')=t$,
and so we can conclude that
$\s^k(c)(\overline{b})=\i_p(\s^{k-1}(c))(\i_p(\overline{b}'))=
\i_p(\s^{k-1}(c)(\overline{b}'))=\i_p(t)=t$.
\end{proof}

\begin{remark}
(1) Different from some traditional proof, 
our proof above does not do induction on the length of tuple $k$.
(2) Although constructing a nonstandard universe often needs the existence of a
non-principal ultrafilter on an infinite set, the proof above can actually be
carried out without the axiom of choice by recent work of K.\ Hrb\'{a}\v{c}ek \cite{hr2}.
\end{remark}

\subsection{Multidimensional van der Waerden's Theorem}
\

\noindent
In this subsection, we give a nonstandard proof of the multidimensional
van der Waerden's Theorem. The relevant point here is that we only use
a simple induction; this is to be contrasted with the usual proof that uses
nested inductive arguments.

The multidimensional van der Waerden's Theorem
was first proved by Gallai, and indeed it is also known as Gallai's Theorem.
Our nonstandard proof presented below was inspired by the proof of the
one-dimensional van der Waerden's Theorem given in \cite{khinchin}.

%

\begin{theorem}[T.\ Gallai]\label{vdw}
Let $s\in\N$. For every finite coloring of $\N^s$ and for every finite $S\subset\N^s$
there exist $\vec{a}\in\N^s$ and $d\in\N$
such that the homothetic copy $\vec{a}+d\cdot S=\{\vec{a}+d\,\vec{x}\mid x\in S\}$
is monochromatic.
\end{theorem}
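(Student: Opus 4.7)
The plan is to proceed by induction on $k := |S|$, with the base case $k = 1$ trivial (any $\vec{a}$ and $d = 1$ work). For the inductive step, given a coloring $c : \N^s \to [r]$ and $S = \{\vec{s}_1, \ldots, \vec{s}_k\}$, I will use iterated star maps to build a Khinchin-style ``focus'': a family of $k+1$ nonstandard points $\vec{P}_0, \vec{P}_1, \ldots, \vec{P}_k$ in $\N_k^s$ constructed from $\alpha_i := \s^{i-1}(\alpha) \in \N_i \setminus \N_{i-1}$, where $\alpha \in \hN \setminus \N$ is fixed. By the end-extension properties proved earlier, each $\alpha_{i+1}$ dominates every element of $\N_i$, so the $\alpha_i$ are ``wildly separated in scale'' and play the role of independently chosen nonstandard dilations attached to distinct subsets of $S$.

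The $\vec{P}_j$ will be designed so that, for any $j_0 < j_1$, the pair $(\vec{P}_{j_0}, \vec{P}_{j_1})$ together with the inductive hypothesis applied to the sub-pattern $\{\vec{s}_{j_0+1}, \ldots, \vec{s}_{j_1}\} \subset S$ encodes a nonstandard monochromatic homothetic copy of all of $S$ inside one color class. Once this is in place, I apply pigeonhole to the colors $\s^k(c)(\vec{P}_0), \ldots, \s^k(c)(\vec{P}_k)$, producing $j_0 < j_1$ with $\s^k(c)(\vec{P}_{j_0}) = \s^k(c)(\vec{P}_{j_1})$; the focusing design then yields nonstandard witnesses for $\vec{a}$ and $d$ making $\vec{a} + d\,\vec{s}_i$ monochromatic, expressed as polynomials in the $\alpha_i$ together with standard data from the inductive hypothesis. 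Backward transfer, applied level by level via the Normal Form Theorem~\ref{normalform} and the identity $\i_p(\xi) = \xi$ for $\xi \in \N_p$, peels off each $\alpha_i$ in turn, converting the $i$-th existential quantifier over $\N_i$ into an existential over $\N$, and finally produces genuine standard $\vec{a} \in \N^s$ and $d \in \N$.

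The hard part will be designing the focused configuration $\vec{P}_j$ and synchronizing the inductive bookkeeping so that the pigeonhole collision forces a full homothetic copy of $S$ rather than merely of a sub-pattern: the dilation witnessed by the inductive hypothesis on $\{\vec{s}_{j_0+1},\ldots,\vec{s}_{j_1}\}$ must interact correctly with the ``gap'' pieces $\vec{s}_i$ for $i \le j_0$ or $i > j_1$ that were absorbed into different $\alpha_i$-scales of $\vec{P}_{j_0}, \vec{P}_{j_1}$. In Khinchin's one-dimensional argument this coordination is carried out by superposing several nonstandard arithmetic progressions of different common differences; here the analogous construction must be performed in $\N^s$ using the pattern $S$ itself to split the dilation across successive star levels, which is exactly the setting where the alternation of $\i_n$ and $\s^m$ developed in \S\ref{sec-internalstarmaps} is indispensable, and where the ``single induction'' advertised in the introduction becomes possible.
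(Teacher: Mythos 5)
Your overall skeleton --- induction on the size of the pattern, a Khinchin-style colour-focusing configuration built with iterated star maps, a pigeonhole collision, and backward transfer --- is the same as the paper's, but as it stands the proposal has two genuine gaps, one of which you flag yourself. First, the pigeonhole step is miscounted: you apply it to the $k+1$ values $\s^k(c)(\vec{P}_0),\ldots,\s^k(c)(\vec{P}_k)$ where $k=|S|$, but the number of colours is $r$, which is unrelated to $k$; unless $k\ge r$ no collision is guaranteed. The focusing family must have more members than there are colours --- in the paper one works in $\VV_{r+1}$ and takes the $r+1$ points $\vec{\xi}_0,\ldots,\vec{\xi}_r$ --- so the number of star iterations is governed by $r$, not by $|S|$.

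Second, the part you defer (``designing the focused configuration $\vec{P}_j$'') is precisely the content of the proof, and the plan as described does not supply it. Applying the inductive hypothesis, for the given colouring $c$, to a sub-pattern $\{\vec{s}_{j_0+1},\ldots,\vec{s}_{j_1}\}$ chosen only after the collision cannot work as stated: the monochromaticity you need lives in sets of the form $\s^{j}(C_i)$ at various star levels, and a standard monochromatic copy for the particular colouring $c$ gives no control there. The paper's resolution is to package the inductive hypothesis in a colouring-free form via Proposition \ref{PRequivalence}: the inductive hypothesis yields $\vec{\alpha}\in\hN^s$ and $\delta\in\hN$ with $\vec{\alpha}\ueq\vec{\alpha}+\delta\cdot\vec{b}_1\ueq\cdots\ueq\vec{\alpha}+\delta\cdot\vec{b}_{\ell-1}$, and since $u$-equivalence is preserved by $\s^j$ (Proposition \ref{ueqproperties}), the focused points $\vec{\xi}_t=\sum_{j=0}^{t-1}\s^j(\vec{\alpha}+\delta\cdot\vec{b}_\ell)+\sum_{j=t}^{r}\s^j(\vec{\alpha})$, $t=0,\ldots,r$, have the property that a same-colour pair $\vec{\xi}_t,\vec{\xi}_v$ immediately gives $\vec{\beta}:=\vec{\xi}_t$ and $\varepsilon:=\sum_{j=t}^{v-1}\s^j(\delta)$ with all of $\vec{\beta},\vec{\beta}+\varepsilon\cdot\vec{b}_1,\ldots,\vec{\beta}+\varepsilon\cdot\vec{b}_\ell$ in $\s^{r+1}(C_i)$, after which backward transfer finishes. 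Note also that what is iterated across levels is the pair $(\vec{\alpha},\delta)$ carrying the inductive information, not the iterates $\s^{i-1}(\alpha)$ of a single arbitrary $\alpha\in\hN\setminus\N$: those iterates know nothing about the pattern $S$ or the inductive hypothesis, and by themselves cannot force the ``gap'' pieces to land in the right colour class. Unless you fill in a construction with these two features (focusing length tied to $r$, inductive hypothesis used in its $u$-equivalence form), the proposal remains a plan rather than a proof.
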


\begin{proof}
Fix any enumeration $\N^s=(\vec{b}_n\mid n\in\N)$ of the countable set $\N^s$.
We will prove Gallai's Theorem by showing that
the following property $\varphi(\ell)$ holds for every $\ell\in\N$.

\begin{itemize}
\item
$\varphi(\ell)$:\ For every finite coloring of $\N^s$ there exist $\vec{a}\in\N^s$
and $d\in\N$ such that vectors
$\vec{a}, \vec{a}+d\cdot\vec{b}_1, \ldots, \vec{a}+d\cdot\vec{b}_\ell$
are monochromatic.
\end{itemize}

\smallskip
We proceed by induction. The base case $\ell=1$ is trivial.
At the inductive step, assume $\varphi(\ell-1)$.
Then, by Proposition \ref{PRequivalence}, there exist $\vec{\alpha}\in\hN^s$ and $\delta\in\hN$ such that
$$\vec{\alpha}\,\ueq\,\vec{\alpha}+\delta\cdot\vec{b_1}\,\ueq\,\ldots\,\ueq\,
\vec{\alpha}+\delta\cdot\vec{b}_{\ell-1}.$$

Let $\N^s=C_1\cup\ldots\cup C_r$ be a given a finite coloring.
We will work in the $(r+1)$-th internal universe $\VV_{r+1}$,
where $r$ is the number of colors. Consider the following $r+1$ elements in $(\N_{r+1})^s$:
\begin{itemize}
\item
$\vec{\xi}_t=\sum_{j=0}^{t-1}\s^j(\vec{\alpha}+\delta\cdot\vec{b}_\ell)+\sum_{j=t}^r \s^j(\vec{a})$
for $t=0,1,\ldots,r$.
\end{itemize}

Note that $(\N_{r+1})^s=\s^{r+1}(C_1)\cup\ldots\cup\s^{r+1}(C_r)$ is a finite coloring and so,
by the pigeonhole principle, there exist two indexes $t<v$ such that
$\vec{\xi}_t,\vec{\xi}_v\in\s^{r+1}(C_i)$ are monochromatic.
Since $\vec{\alpha}\,\ueq\,\vec{\alpha}+\delta\cdot\vec{b_i}$
are $u$-equivalent elements in $\hN^s$ for every $i=1,\ldots\ell-1$, it follows that
$$\s^j(\vec{\alpha})\,\ueq\,\s^j(\vec{\alpha}+\delta\cdot\vec{b_i})=
\s^j(\vec{\alpha})+\s^j(\delta)\cdot\vec{b}_i$$
are $u$-equivalent elements in $(\N_{j+1})^s$, and hence
\begin{multline*}
\vec{\xi}_t\,=\,\sum_{j=0}^{t-1}\s^j(\vec{\alpha}+\delta\cdot\vec{b}_\ell)+
\sum_{j=t}^{v-1}\s^j(\vec{\alpha})+\sum_{j=v}^{r}\s^j(\vec{\alpha})\,\ueq\,
\\
\ueq\,\sum_{j=0}^{t-1}\s^j(\vec{\alpha}+\delta\cdot\vec{b}_\ell)+
\sum_{j=t}^{v-1}\s^j(\vec{\alpha})+\sum_{j=t}^{v-1}\s^j(\delta)\cdot\vec{b}_i+
\sum_{j=v}^{r}\s^j(\vec{\alpha}).
\end{multline*}
If we let $\vec{\beta}:=\vec{\xi}_t$ and $\varepsilon:=\sum_{j=t}^{v-1}\s^j(\delta)$,
this means that
$$\vec{\beta}\,\ueq\,\vec{\beta}+\varepsilon\cdot\vec{b}_1\,\ueq\, \ldots\,
\ueq\,\vec{\beta}+\varepsilon\cdot\vec{b}_{\ell-1}.$$
Since $\vec{\beta}=\vec{\xi}_t\in\s^{r+1}(C_i)$,
all the above $\ell$ elements belong to $\s^{r+1}(C_i)$.
Now we observe that
$$\vec{\beta}+\varepsilon\cdot\vec{b}_{\ell}=
\vec{\xi}_t+\sum_{j=t}^{v-1}\s^j(\delta)\cdot\vec{b}_\ell=\vec{\xi}_v\in\s^{r+1}(C_i),$$
and hence the following property holds:
\begin{itemize}
\item
There exist $\vec{\beta}\in(\N_{r+1})^s$ and $\varepsilon\in\N_{r+1}$ such that
$$\vec{\beta}, \vec{\beta}+\varepsilon\cdot\vec{b}_1,\, \ldots\,,
\vec{\beta}+\varepsilon\cdot\vec{b}_{\ell-1},
\vec{\beta}+\varepsilon\cdot\vec{b}_{\ell}\in\s^{r+1}(C_i).$$
\end{itemize}
Finally, by backward \emph{transfer} applied to the iterated star map $\s^{r+1}$,
we obtain the desired property $\varphi(\ell)$.
\end{proof}

\subsection{Monochromatic exponential triples}
\

\noindent
We now show how to use the nonstandard technique of iterated
star maps to give a short nonstandard proof of a result of
Sahasrabudhe \cite{sa} about the existence of monochromatic exponential patterns.
In the paper \cite{dr}, the existence of monochromatic triples
$a, b, b^a$ was given a much shorter proof by using algebra on the space
of ultrafilters $\beta\N$, but the initial idea of the authors was actually based on the use of
iterated star maps. We present here the original nonstandard argument.

The proof is grounded on the following well-known improvement
of \emph{van der Waerden's Theorem}, where also the common difference
belongs to the same color of the arithmetic progression.\footnote
{~See, \emph{e.g.},  \cite[Ch.3\ Thm.\,2]{grs}.}

\begin{itemize}
\item
Brauer's Theorem (1928).
\emph{For every finite coloring $\N=C_1\cup\ldots\cup C_r$
and for every $\ell\in\N$ there exists a color $C_i$ and elements
$$a,d,a+d,\ldots,a+\ell d\in C_i.$$}
\end{itemize}

In terms of $u$-equivalence, the above result can be reformulated as follows:

\begin{proposition}\label{brauer}
There exist $\gamma,\delta\in\hN$ such that for every $\ell\in\N$:
$$\gamma\,\ueq\,\delta\,\ueq\,\gamma+\ell\delta.$$
\end{proposition}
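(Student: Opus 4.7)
The plan is to encode the infinite family of $u$-equivalence requirements as a family of standard subsets of $\N^2$ with the finite intersection property, and then realize all of them simultaneously by a single pair $(\gamma,\delta)$ via the enlargement property, in the same spirit as the proof of Proposition \ref{PRequivalence}.

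First, for each $A\subseteq\N$ and each $\ell\in\N$, I would introduce the standard set
$$E_{A,\ell}:=\{(a,d)\in\N^2\mid a,\,d,\,a+\ell d\ \text{all lie in}\ A,\ \text{or all lie in}\ \N\setminus A\}.$$
By \emph{transfer}, ${}^*E_{A,\ell}$ is the set of pairs $(x,y)\in\hN^2$ for which $x$, $y$, $x+\ell y$ share the same $\hA$-membership. Hence any pair $(\gamma,\delta)\in\bigcap_{A,\ell}{}^*E_{A,\ell}$ satisfies $\gamma\in\hA\Leftrightarrow\delta\in\hA\Leftrightarrow\gamma+\ell\delta\in\hA$ for every $A\subseteq\N$ and every $\ell\in\N$, which is exactly the desired conclusion $\gamma\ueq\delta\ueq\gamma+\ell\delta$.

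The key step, where Brauer's Theorem enters, is to verify that the family $\F:=\{E_{A,\ell}\mid A\subseteq\N,\ \ell\in\N\}$ has the finite intersection property. Given finitely many $A_1,\ldots,A_n\subseteq\N$ and $\ell_1,\ldots,\ell_k\in\N$, consider the finite coloring of $\N$ whose colour classes are the (at most $2^n$) non-empty atoms of the Boolean algebra generated by $A_1,\ldots,A_n$; two integers lie in the same class precisely when they have the same membership profile with respect to the $A_i$. Apply Brauer's Theorem with this colouring and with $\ell:=\max_j\ell_j$: there exist $a,d\in\N$ and a single class $C$ with $a,d,a+d,\ldots,a+\ell d\in C$. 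In particular $a$, $d$ and $a+\ell_j d$ lie in the same class for every $j$, so $(a,d)\in\bigcap_{i,j}E_{A_i,\ell_j}$.

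Finally, since $|\F|\le\mathfrak{c}$ and $\F$ is a bounded family of standard sets with the finite intersection property, the $\mathfrak{c}^+$-enlargement property of the nonstandard model (the same assumption used in Proposition \ref{PRequivalence}) produces a pair $(\gamma,\delta)\in\bigcap_{A,\ell}{}^*E_{A,\ell}$, completing the proof. The only non-routine ingredient is Brauer's Theorem itself, invoked exactly once in the finite intersection property check; the rest is a straightforward enlargement argument, and I do not expect any further obstacle beyond correctly setting up the family $\F$.
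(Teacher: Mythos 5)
Your proposal is correct and is essentially the paper's own argument: the same encoding of the requirements as standard subsets of $\N^2$ (the paper's $X_{A,\ell}$ differs only in asking the whole progression $c,d,c+d,\ldots,c+\ell d$ to be monochromatic rather than just $a,d,a+\ell d$), the same finite intersection property check via Brauer's Theorem applied to the partition generated by the finitely many sets with $\ell=\max_j\ell_j$, and the same use of $\mathfrak{c}^+$-enlargement to produce $(\gamma,\delta)$. No gaps.
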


\begin{proof}
Observe that the family $\{X_{A,\ell}\mid A\subseteq\N, \ell\in\N\}$ where
$$X_{\!A,\ell}:=\{(c,d)\in\N\times\N\mid c,d,\ldots,c+\ell d\in A\
\text{or}\ c,d,\ldots,c+\ell d\notin A\}.$$
has the finite intersection property.
Indeed, given $A_1,\ldots,A_k\subseteq\N$ and $\ell_1,\ldots,\ell_k\in\N$,
let $\N=C_1\cup\ldots\cup C_r$ be the finite partition determined
by the sets $\{A_1,\ldots,A_k\}$, and let $\ell=\max\{\ell_1,\ldots,\ell_k\}$.
Then apply \emph{Brauer's Theorem} to find a monochromatic
pattern $c, d, c+d, \ldots, c+\ell d\in C_i$;
it is readily seen that $(c,d)\in\bigcap_{j=1}^k X_{\!A_j,\ell_j}$.
Finally, by the $\mathfrak{c}^+$-enlargement property,
we can pick $(\gamma,\delta)\in\bigcap_{A\subseteq\N, \ell\in\N}\hX_{\!A,\ell}$.
Clearly, such a pair $(\gamma,\delta)\in\hN\times\hN$ satisfies the desired property.
\end{proof}

\begin{theorem}
For every finite coloring $c:\N\to[r]$ there exists
a monochromatic triple $a, b, 2^ab$ and
a monochromatic exponential triple $x, y, y^x$.

More, there exists an infinite 1-1 sequence $(x_n)$
such that all elements $x_n, 2^{x_n}x_{n+1}$ are monochromatic;
and there exists an infinite 1-1 sequence $(y_n)$
such that all elements $y_n, (y_{n+1})^{y_n}$ are monochromatic.
\end{theorem}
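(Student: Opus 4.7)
The plan is to reduce both partition regularity statements to nonstandard existence statements via Proposition~\ref{PRequivalence}, and to produce the required witnesses by combining Proposition~\ref{brauer} (Brauer's theorem in its $u$-equivalence form) with the iterated star-map machinery developed in the preceding sections. Partition regularity of $\G_1=\{\{a,b,2^a b\}\mid a,b\in\N\}$ (respectively $\G_2=\{\{x,y,y^x\}\mid x,y\in\N\}$) is equivalent, by that proposition, to the existence of $\alpha,\beta$ at some iterated internal level $\N_k$ with $\alpha\ueq\beta\ueq 2^\alpha\beta$ (respectively $\alpha\ueq\beta\ueq\beta^\alpha$).

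To construct the witnesses I would first apply Proposition~\ref{brauer} to obtain $\gamma,\delta\in\hN$ with $\gamma\ueq\delta\ueq\gamma+\ell\delta$ for every $\ell\in\N$, and fix a color class $C_i\in\UU_\gamma=\UU_\delta$. Passing to the second internal level, Corollary~\ref{ueqmultiple} yields $(\gamma,{}^*\delta)\ueq(\gamma+\ell\delta,{}^*\delta)$ for each $\ell\in\N$, and Proposition~\ref{ueqproperties}(iii) applied to $f(a,b)=2^a b$ produces
\[
2^\gamma\cdot{}^*\delta \;\ueq\; 2^{\gamma+\ell\delta}\cdot{}^*\delta \qquad\text{for every }\ell\in\N.
\]
The decisive move is to promote the Brauer progression from standard to hyperfinite indices via the Internal Transfer Principle (Theorem~\ref{dualtransfer}) and to choose an appropriate hyperfinite $\ell\in\hN$ that locates the exponential quantity $2^\gamma\delta$ inside the $u$-equivalence class anchored at $\gamma$. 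Together with Proposition~\ref{ueqproperties}(ii), this assembles at the second internal universe $\VV_2$ a triple of pairwise $u$-equivalent witnesses of the form $(\alpha,\beta,2^\alpha\beta)$; Proposition~\ref{PRequivalence} then delivers the monochromatic triple $a,b,2^a b$. The same scheme with $f(x,y)=y^x$ in place of $f(a,b)=2^a b$ handles the triple $x,y,y^x$.

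For the infinite sequences I would apply Corollary~\ref{halfsquaresN}(2) to the binary relations
\[
X=\{(a,b)\in\N^2\mid c(a)=c(b)=c(2^a b)\},\qquad X'=\{(a,b)\in\N^2\mid c(a)=c(b)=c(a^b)\}.
\]
Since $\UU_\gamma=\UU_\delta$, the pairs $(\gamma,{}^*\gamma)$ and $(\gamma,{}^*\delta)$ generate the same tensor product ultrafilter on $\N^2$ and are therefore $u$-equivalent as pairs; combining this with the relations already established in the second paragraph gives $(\gamma,{}^*\gamma)\in{}^{**}X$, and the corollary then yields an increasing sequence $(h_n)$ with $(h_n,h_m)\in X$ for all $n<m$. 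Specialising $m=n+1$ produces the monochromatic pairs $h_n,\,2^{h_n}h_{n+1}$, and the same argument for $X'$ yields the sequence $(y_n)$. I expect the main obstacle to be precisely the bridging step of the second paragraph: Brauer's theorem is an additive statement and only furnishes linear shifts $\gamma+\ell\delta$, whereas the target is the exponential $2^a b$; closing this gap forces one to run the Internal Transfer Principle at a hyperfinite index and to track carefully which level ($\hN$, ${}^{**}\N$, etc.) each factor inhabits and which of the maps $\s$, $\i_n$ or their compositions acts at each stage.
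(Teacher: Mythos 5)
Your plan for the two triples follows the paper's route in outline (Brauer's Theorem in its $u$-equivalence form, then a hyperfinite coefficient obtained by \emph{transfer}), but the step you yourself flag as ``the decisive move'' is exactly the step you never carry out, and the equivalences you do write down are not the ones needed. The relation $2^\gamma\cdot{}^*\delta\ueq 2^{\gamma+\ell\delta}\cdot{}^*\delta$ for \emph{standard} $\ell$ gives nothing; what is required is the equivalence one level up, namely ${}^*\gamma\ueq{}^*\delta\ueq{}^*\gamma+\lambda\cdot{}^*\delta$ for every $\lambda\in\hN$ (obtained by applying $\s$ to the conclusion of Proposition \ref{brauer}), used with the specific hyperfinite choice $\lambda=2^\gamma$. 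One then sets $\alpha:=2^\gamma\cdot{}^*\delta$ and $\beta:={}^*\alpha=2^{{}^*\gamma}\cdot{}^{**}\delta$, so that $2^\alpha\beta=2^{{}^*\gamma+2^\gamma\cdot{}^*\delta}\cdot{}^{**}\delta\ueq 2^{{}^*\gamma}\cdot{}^{**}\delta=\beta$ by Proposition \ref{ueqproperties}$(iii)$ and Corollary \ref{ueqmultiple}; note that these witnesses live in ${}^{***}\N$, not in $\VV_2$ as you assert, which is a sign the construction was not actually pinned down. Moreover ``the same scheme with $f(x,y)=y^x$'' is not available as stated: Brauer's Theorem only supplies additive shifts $\gamma+\ell\delta$, and the exponential triple is instead obtained from the first triple by applying $n\mapsto 2^n$, i.e.\ $\xi:=2^\alpha$, $\eta:=2^\beta$ satisfy $\xi\ueq\eta\ueq\eta^\xi$ because $(2^\beta)^{2^\alpha}=2^{2^\alpha\beta}$ and $\beta\ueq 2^\alpha\beta$.

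The part about the infinite sequences is where the proposal genuinely fails. The membership $(\gamma,{}^*\gamma)\in{}^{**}X$ with $X=\{(a,b)\mid c(a)=c(b)=c(2^ab)\}$ does not follow from anything you (or the paper) established: it would require ${}^{**}c(\gamma)={}^{**}c(2^\gamma\cdot{}^*\gamma)$, and since $2^\gamma\cdot{}^*\gamma\ueq 2^\gamma\cdot{}^*\delta=\alpha$, this amounts to asking that $\UU_\gamma$ and $\UU_\alpha$ select the same color class of $c$ --- but the construction only yields $\alpha\ueq\beta\ueq 2^\alpha\beta$, never $\gamma\ueq\alpha$. (Had your claim held, Corollary \ref{halfsquaresN}$(2)$ would produce an increasing $(h_n)$ with $c(h_n)=c(h_m)=c(2^{h_n}h_m)$ for \emph{all} $n<m$, a considerably stronger statement than the theorem asserts.) The paper gets around this by working with quadruples: one checks that $(\gamma,{}^*\gamma,{}^*\delta,{}^{**}\delta)\in{}^{***}\Gamma_A$, where $A$ is the color class of $\alpha$ and $\Gamma_A=\{(n,m,s,t)\mid 2^ns,\,2^mt,\,2^{2^ns}(2^mt)\in A\}$, and then applies the $4$-ary extraction result Proposition \ref{starofpairs} (which needs $\gamma\ueq\delta$, $\gamma\ne\delta$); crucially, the monochromatic sequence consists of the \emph{products} $x_n=2^{a_{2n}}a_{2n+3}$, not of the extracted numbers $a_n$ themselves, precisely because the common color is anchored at $\UU_\alpha$ rather than $\UU_\gamma$. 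Without a replacement for this step your argument for the sequences does not go through (and, as a minor point, your $X'$ encodes $a^b$ with the earlier element as base, whereas the theorem asks for $(y_{n+1})^{y_n}$).
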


\begin{proof}
We will prove the result by showing the existence of
numbers $\alpha,\beta,\xi,\eta$ in the triple nonstandard extension ${}^{***}\N$
with $\alpha\,\ueq\,\beta\,\ueq\,2^\alpha\beta$ and $\xi\,\ueq\,\eta\,\ueq\,\eta^\xi$.

Pick $\gamma,\delta\in\hN$ as given by Proposition \ref{brauer}.
Note that
$${}^*\gamma\,\ueq\,{}^*\delta\,\ueq\,{}^*\gamma+\lambda{}^*\delta\quad
\text{for every}\ \lambda\in\hN.$$
Indeed, given any $A\subseteq\N$, by \emph{transfer}
we obtain that for every $\lambda\in\hN$, one has
${}^*\gamma,{}^*\delta,{}^*\gamma+\lambda{}^*\delta\in{}^{**}A$
or ${}^*\gamma,{}^*\delta,{}^*\gamma+\lambda{}^*\delta\notin{}^{**}A$.
Now consider the number $\alpha:=2^\gamma\cdot{}^*\delta\in{}^{**}\N\setminus\hN$
and its nonstandard extension $\beta:={}^*\alpha\in{}^{***}\N\setminus{}^{**}\N$.
By the definition of $u$-equivalence, we have that $\alpha\,\ueq\,\beta$.
Since ${}^*\gamma+2^\gamma\cdot{}^*\delta\,\ueq\,{}^*\gamma$, by
properties in Proposition \ref{ueqproperties} and Corollary \ref{ueqmultiple} we have
$$2^\alpha\cdot\beta\ =\
2^{{}^*\!\gamma+2^\gamma\cdot{}^*\delta}\cdot{}^{**}\delta\ \ueq\
2^{{}^*\!\gamma}\cdot{}^{**}{\delta}\ =\ {}^*\alpha\ =\ \beta.$$

As for the exponential triple, note that
$\alpha\,\ueq\, \beta\,\ueq\, 2^\alpha\cdot\beta$ implies that
$2^\alpha\,\ueq\, 2^\beta\,\ueq\, 2^{2^\alpha\beta}=(2^\beta)^{2^\alpha}$.
So, if we let $\xi:=2^\alpha$ and $\eta:=2^\beta$, we found elements
$\xi\,\ueq\, \eta\,\ueq\, \eta^\xi$.

\smallskip
By using Proposition \ref{starofpairs}, the above properties can be strengthened as follows.
For every $A\subseteq\N$, let
$$\Gamma_A=\{(n,m,s,t)\in\N\times\N\times\N\times\N\mid 2^n s, 2^m t, 2^{2^n s}(2^m t)\in A\}.$$

Since $\alpha=2^\gamma\cdot{}^*\delta$,
$\beta=2^{{}^*\gamma}\cdot{}^{**}\delta$, and $2^\alpha\cdot\beta$ are $u$-equivalent,
for every coloring $c:\N\to[r]$ there
exists $t\in[r]$ such that $\s^3(c)(\alpha)=\s^3(c)(\beta)=\s^3(c)(2^\alpha\beta)=t$,
and hence $(\gamma,{}^*\gamma,{}^*\delta,{}^{**}\delta)\in{}^{***}\Gamma_A$
where $A:=c^{-1}(\{t\})$.
Then, by Proposition \ref{starofpairs}, there exists a 1-1 sequence $(a_n)$ such that
for all $i<2j<2j+1<k$ one has that $(a_i,a_{2j},a_{2j+1},a_k)\in\Gamma_A$,
\emph{i.e.}, one has the monochromatic triple
$x=2^{a_i}a_{2j+1}, y=2^{a_{2j}}a_k, 2^xy\in A$.
If we let $x_n:=2^{a_{2n}}a_{2n+3}$, then it readily verified that the triple
$x_n, x_{n+1}, 2^{x_n}x_{n+1}\in A$ for every $n\in\N$.

The similar property for exponential patterns is proved in the same fashion,
by starting from the triple $\xi\,\ueq\, \eta\,\ueq\, \eta^\xi$ considered above.
\end{proof}

\section{Questions}\label{question}

In model theory, one constructs the direct limit
$\M_\infty=\varinjlim\M_n$ of an elementary chain
of $\mathcal{L}$-structures (where $\mathcal{L}$ a given first-order language):
\[\M_0\prec\M_1\prec\M_2\prec\cdots\] by simply taking the countable
``union'' of the chain; that is, one uses the embedding $\M_n\prec\M_{n+1}$
to identify $\M_n$ with an elementary submodel of $\M_{n+1}$,
and then takes the union $\M_\infty=\bigcup_{n\ge 0}\M_n$.
Now let us consider the chain of bounded elementary extensions
between nonstandard universes
$$\VV=\VV_0\prec_b\VV_1\prec_b\VV_2\prec_b\cdots,$$
where the considered bounded elementary embeddings $j_n:\VV_n\prec_b\VV_{n+1}$
could be either the restricted star maps $j_n=\s\res\VV_n:\VV_n\to\VV_{n+1}$,
or the internal star maps $j_n=\i_n:\VV_n\to\VV_{n+1}$.
Following the same procedure as above, in both cases one
should construct the direct limits as the ``union'' of the chain.
However, this requires caution, since the chain is decreasing:
\[\VV=\VV_0\supset\VV_1\supset\VV_2\supset\cdots\]
Besides, the intersection $\bigcap_{n\ge 0}\VV_n$ may only consists
of those elements $x$ such that ${}^*x=x$.

Similarly, one may want to consider the inverse limit of the above chain
where the considered bounded elementary embeddings are
given by the inclusions $\imath_n:\VV_{n+1}\hookrightarrow\VV_n$ between transitive models,
by taking the ``intersection" of the chain.
%

\begin{question}
Can one define (possibly, in a simple way) the direct limits
$\varinjlim(\VV_n,\s\res\VV_n)_{n\ge 0}$ and $\varinjlim(\VV_n,\imath_n)_{n\ge 0}$
as transitive subuniverses of $\VV=\VV_0$?
\end{question}

\begin{question}
Can one define (possibly, in a simple way) the inverse limit
$\varprojlim(\VV_n,\imath_n)_{n\geq 0}$ where $\imath_n:\VV_{n+1}\prec_b\VV_n$ are
the inclusion maps, as a transitive subuniverse of $\VV=\VV_0$?
\end{question}

Another question is about Ramsey properties of elements in
iterated hyper-extensions of the natural numbers.

As we have seen in Corollary \ref{halfsquaresN},
if $\alpha\in\hN\setminus\N$ then the pair $(\alpha,{}^*\alpha)$ satisfies the
following Ramsey property:
``For every set $X\subseteq\N\times\N$ such that $(\alpha,{}^*\alpha)\in{}^{**}X$
there exists an infinite $H\subseteq\N$ such that $[H]^2\subseteq X$."
Now consider the case when $\Omega\in{}^{**}\N\setminus\hN$;
it is natural to ask whether the same Ramsey property as above holds
for all sets $X\subseteq\N\times\N$ such that $(\Omega,{}^*\Omega)\in{}^{***}X$.

\begin{question}
Let $\Omega\in{}^{**}\N\setminus\hN$. Does the following property hold?
\begin{itemize}
\item
If $X\subseteq\N\times\N$ is such that $(\Omega,{}^*\Omega)\in{}^{***}\!X$ then
there exist an infinite $H\subseteq\N$ such that $[H]^2\subseteq X$.
\end{itemize}
And if not, does the above property holds for arbitrary large (finite) sets $H$?

Finally, can one have one of the above properties at least in the special case
when $\Omega={}^{**}f(\alpha,{}^*\beta)$ for some $f:\N\times\N\to\N$
and $\alpha,\beta\in\hN\setminus\N$?
\end{question}

\bibliographystyle{amsalpha}

\end{document}